\newtheorem{thm}{Theorem}[section]
\newtheorem{prop}[thm]{Proposition}
\newtheorem{lemma}[thm]{Lemma}
\newtheorem{cor}[thm]{Corollary}
\theoremstyle{definition}
\newtheorem{dfn}[thm]{Definition}
\newtheorem{constr}[thm]{Construction}
\newtheorem{ax}[thm]{Axioms}
\newtheorem{ntn}[thm]{Notation}
\newtheorem{que}[thm]{Question}
\newtheorem{rem}[thm]{Remark}
\newcommand{\pow}{\mathcal P}
\newcommand{\id}{\mathrm{id}}
\newcommand{\setmany}{\text{\reflectbox{$\mathsf{S}$}}}
\newcommand{\epsin}{\mathrel{\varepsilon}}
\title{Categorical New Foundations}
\author{
Paul Gorbow \\
\\
	\small University of Gothenburg \\
	\small Department of Philosophy, Linguistics, and Theory of Science \\
	\small Box 200, 405 30 G\"OTEBORG, Sweden \\
	\small\url{pgorbow@gmail.com} \\
}
\begin{document}

\maketitle

\section{Introduction}

New Foundations ($\mathrm{NF}$) is a set theory obtained from naive set theory by putting a stratification constraint on the comprehension schema; for example, it proves that there is a universal set $V$, and the natural numbers are implemented in the Fregean way (i.e. $n$ is implemented as the set of all sets with $n$ many elements). $\mathrm{NFU}$ ($\mathrm{NF}$ with atoms) is known to be consistent through its remarkable connection with models of conventional set theory that admit automorphisms. This connection was discovered by Jensen, who established the equiconsistency of $\mathrm{NFU}$ with a weak fragment of $\mathrm{ZF}$, and its consistency with the axiom of choice \cite{Jen69}. (So in the $\mathrm{NF}$-setting atoms matter; Jensen's consistency proof for $\mathrm{NFU}$ does not work for $\mathrm{NF}$.) 

This paper\footnotetext[0]{Thanks to Henrik Forssell, Peter LeFanu Lumsdaine and the anonymous referee for valuable feedback on this paper.}  aims to lay the ground for a category theoretic approach to the study of $\mathrm{NF}$. A first-order theory, $\mathrm{ML}_\mathsf{Cat}$, in the language of categories is introduced and proved to be equiconsistent to $\mathrm{NF}$. $\mathrm{ML}_\mathsf{Cat}$ is intended to capture the categorical content of the predicative version of the class theory $\mathrm{ML}$ of $\mathrm{NF}$. The main result, for which this paper is motivated, is that $\mathrm{NF}$ is interpreted in $\mathrm{ML}_\mathsf{Cat}$ through the categorical semantics. This enables application of category theoretic techniques to meta-mathematical problems about $\mathrm{NF}$-style set theory. Conversely, it is shown that the class theory $\mathrm{ML}$ interprets $\mathrm{ML}_\mathsf{Cat}$, and that a model of $\mathrm{ML}$ can be obtained constructively from a model of $\mathrm{NF}$. Each of the results in this paragraph is shown for the versions of the theories with and without atoms, both for intuitionistic and classical logic.\footnote{Due to the lack of knowledge about the consistency strength of $\mathrm{INF(U)}$, the non-triviality of the statement $\mathrm{Con(INF(U)) \Rightarrow Con(IML(U))}$ needs to be taken as conditional, see Remark \ref{remML}.} Therefore, we use the notation $\mathrm{(I)NF(U)}$ and $\mathrm{(I)ML(U)}$, where the $\mathrm{I}$ stands for the intuitionistic version and the $\mathrm{U}$ stands for the version with atoms, so four versions of the theories are considered in parallel. An immediate corollary of these results is that $\mathrm{(I)NF}$ is equiconsistent to $\mathrm{(I)NFU} + |V| = |\mathcal{P}(V)|$. For the classical case, this has already been proved in \cite{Cra00}, but the intuitionistic case appears to be new. Moreover, the result becomes quite transparent in the categorical setting. 

Just like a category of classes has a distinguished subcategory of small morphisms (cf. \cite{ABSS14}), a category modelling $\mathrm{(I)ML(U)}_\mathsf{Cat}$ has a distinguished subcategory of type-level morphisms. This corresponds to the distinction between sets and proper classes in $\mathrm{(I)NF(U)}_\mathsf{Set}$. With this in place, the axiom of power objects familiar from topos theory can be appropriately reformulated for $\mathrm{(I)ML(U)}_\mathsf{Cat}$. It turns out that the subcategory of type-level morphisms contains a topos as a natural subcategory.

Section \ref{stratAlgMot} explains the motivation behind the paper and in Section \ref{stratAlgBack} some further background is given. 

Section \ref{Stratified set theory and class theory} introduces the set theories $\mathrm{(I)NF(U)}_\mathsf{Set}$ and the class theories $\mathrm{(I)ML(U)}_\mathsf{Class}$. Here we also establish that $\mathrm{NF(U)}_\mathsf{Set}$ is equiconsistent to $\mathrm{ML(U)}_\mathsf{Class}$, through classical model theory.

In Section \ref{CatAxioms}, categorical semantics is explained in the context of Heyting and Boolean categories. This semantics is applied to show generally that $\mathrm{(I)NF(U)}_\mathsf{Set}$ is equiconsistent to $\mathrm{(I)ML(U)}_\mathsf{Class}$.

The axioms of the novel categorical theory $\mathrm{(I)ML(U)}_\mathsf{Cat}$ are given in Section \ref{Formulation of ML_CAT}, along with an interpretation of $\mathrm{(I)ML(U)}_\mathsf{Cat}$ in $\mathrm{(I)ML(U)}_\mathsf{Class}$. 

It is only after this that the main original results are proved. Most importantly, in Section \ref{interpret set in cat}, category theoretic reasoning is used to validate the axioms of $\mathrm{(I)NF(U)}_\mathsf{Set}$ in the internal language of $\mathrm{(I)ML(U)}_\mathsf{Cat}$ through the categorical semantics. This means that $\mathrm{(I)NF(U)}_\mathsf{Set}$ is interpretable in $\mathrm{(I)ML(U)}_\mathsf{Cat}$. The equiconsistency of $\mathrm{(I)NF}_\mathsf{Set}$ and $\mathrm{(I)NFU} + |V| = |\mathcal{P}(V)|$ is obtained as a corollary. 

In Section \ref{subtopos}, it is shown that every $\mathrm{(I)ML(U)}_\mathsf{Cat}$-category contains a topos as a subcategory. 

In Section \ref{where to go NF cat}, we discuss possible directions for further research.

\section{Motivation}\label{stratAlgMot}

$\mathrm{NF}$ corresponds closely with the simple theory of types, TST, an extensional version of higher order logic which Chwistek and Ramsey indepentently formulated as a simplification of Russell and Whitehead's system in Principia Mathematica. It was from contemplation of TST that Quine introduced $\mathrm{NF}$ \cite{Qui37}. Essentially, $\mathrm{NF}$ is obtained from TST by forgetting the typing of the relations while retaining the restriction on comprehension induced by the typing (thus avoiding Russell's paradox). This results in the notion of stratification, see Definition \ref{DefStrat} below. Thus $\mathrm{NF}$ and $\mathrm{NFU}$ resolve an aspect of type theory which may be considered philosophically dissatisfying: Ontologically, it is quite reasonable to suppose that there are relations which can take both individuals and relations as relata. The simplest example is probably the relation of identity. But in type theory, it is not possible to relate entities of different types. We cannot even say that they are unequal. Since the universe of $\mathrm{NF}$ or $\mathrm{NFU}$ is untyped, such issues disappear. It is therefore not surprising that stratified set theory has attracted attention from philosophers. For example, Cocchiarella applied these ideas to repair Frege's system \cite{Coc85} (a similar result is obtained in \cite{Hol15}), and Cantini applied them to obtain an interesting type-free theory of truth \cite{Can15}. Along a similar line of thought, the categorical version of $\mathrm{NF}$ and $\mathrm{NFU}$ brought forth in this paper may well be helpful for transferring the ideas of stratified set theory to research in formal ontology. In a formal ontology, one may account for what individuals, properties, relations and tropes exist, where properties and relations are considered in an intensional rather than an extensional sense. Roughly, in category theory the objects are non-extensional, but the morphisms are extensional, and this is arguably fitting to the needs of formal ontology.

$\mathrm{NFU}$ is also intimitely connected with the highly active research on non-standard models of arithmetic and set theory, as is further explained in the next section. Out of this connection, Feferman proposed a version of $\mathrm{NFU}$ as a foundation for category theory, allowing for such unlimited categories as the category of all sets, the category of all groups, the category of all topological spaces, the category of all categories, etc \cite{Fef06}. This line of research was further pursued by Enayat, McKenzie and the author in \cite{EGM17}. In short, conventional category theory works perfectly fine in a subdomain of the $\mathrm{NFU}$-universe, but the unlimited categories live outside of this subdomain, and their category theoretic properties are unconventional. Even though they are unconventional (usually failing to be cartesian closed), one might argue that nothing is lost by including them in our mathematical universe. These categories remain to be systematically studied.

The need for a categorical understanding of stratified set theory is especially pressing since very little work has been done in this direction. McLarty showed in \cite{McL92} that the ``category of sets and functions'' in $\mathrm{NF}$ is not cartesian closed. However, several positive results concerning this category were proved in an unpublished paper by Forster, Lewicki and Vidrine \cite{FLV14}: In particular, they showed that it has a property they call ``pseudo-Cartesian closedness''. Similarly, Thomas showed in \cite{Tho17} that it has a property he calls ``stratified Cartesian closedness''. The moral is that it is straight forward to show in $\mathrm{INFU}$, that if $A$ and $B$ are sets, which are respectively isomorphic to sets of singletons $A'$ and $B'$, then the set of functions from $\bigcup A'$ to $\bigcup B'$ is an exponential object of $A$ and $B$. ($V$ is not isomorphic to any set of singletons.) In \cite{FLV14} a generalization of the notion of topos was proposed, with ``the category of sets and functions'' of $\mathrm{NF}$ as an instance. It has however not been proved that the appropriate extension $T$ of this theory (which $\mathrm{NF}$ interprets) satisfies $\mathrm{Con}(T) \Rightarrow \mathrm{Con}(\mathrm{NF})$. Using the results of Section \ref{interpret set in cat} of this paper, it seems within reach to obtain that result by canonically extending a model of $T$ to a model of the categorical theory $\mathrm{ML}_\mathsf{Cat}$ introduced here. That line of research would also help carve out exactly what axioms of $T$ are necessary for that result. Moreover, in \cite{FLV14} it was conjectured that any model of $T$ has a subcategory which is a topos. In Section \ref{subtopos} of this paper, it is proved that every model of $\mathrm{(I)ML(U)}_\mathsf{Cat}$ has a subcategory which is a topos.

A related direction of research opened up by the present paper is to generalize the techniques of automorphisms of models of conventional set theory, in order to study automorphisms of topoi. The author expects that a rich landscape of models of $\mathrm{(I)ML(U)}_\mathsf{Cat}$ would be uncovered from such an enterprise. For example, just like there is a topos in which every function on the reals is continuous, a similar result may be obtainable for $\mathrm{IMLU}_\mathsf{Cat}$ by finding such a topos with an appropriate automorphism. Given the intriguing prospects for founding category theory in stratified set theory, this would open up interesting possibilities for stratified category theoretic foundation of mathematics.

The categorical approach to $\mathrm{NF}$ is also promising for helping the metamathematical study of $\mathrm{NF}$. As stated in the introduction, the main result of this paper has the immediate corollary that $\mathrm{(I)NF}$ is equiconsistent to $\mathrm{(I)NFU} + |V| = |\mathcal{P}(V)|$. A major open question in the metamathematics of $\mathrm{NF}$ is whether $\mathrm{NF}$ (or even its intuitionistic counterpart, which has not been shown to be equiconsistent to $\mathrm{NF}$) is consistent relative to a system of conventional set theory (independent proof attempts by Gabbay and Holmes have recently been put forth). So yet a motivation for introducing $\mathrm{ML}_\mathsf{Cat}$ is simply that the flexibility of category theory may make it easier to construct models of $\mathrm{ML}_\mathsf{Cat}$, than to construct models of $\mathrm{NF}$, thus aiding efforts to prove and/or simplify proofs of $\mathrm{Con(NF)}$.

Since categorical model theory tends to be richer in the intuitionistic setting, an intriguing line of research is to investigate the possibilities for stratified dependend type theory. Dependent type theory is commonly formulated with a hierarchy of universes. In a sense, this hierarchy is inelegant and seemingly redundant, since any proof on one level of the hierarchy can be shifted to a proof on other levels of the hierarchy. Model-theoretically, this can be captured in a model with an automorphism. Since the semantics of type theory tends to be naturally cast in category theory, the understanding arising from the present paper would be helpful in such an effort.

In conclusion, ``categorification'' tends to open up new possibilities, as forcefully shown by the fruitfulness of topos theory as a generalization of set theory. In the present paper it has already resulted in a simple intuitive proof of the old result of Crabb\'e stated above. So given the relevance of $\mathrm{NF}$ and $\mathrm{NFU}$ to type theory, philosophy, non-standard models of conventional set theory and the foundations of category theory, it is important to investigate how $\mathrm{NF}$ and $\mathrm{NFU}$ can be expressed as theories in the language of category theory.

\section{Background}\label{stratAlgBack}

Algebraic set theory, categorical semantics and categorical logic more generally, have been developed by a large number of researchers. An early pioneering paper of categical logic is \cite{Law63}; Joyal and Moerdijk started out algebraic set theory with \cite{JM91} and wrote a short book on the subject \cite{JM95}. The present paper is largely influenced by the comprehensive work of Awodey, Butz, Simpson and Streicher embodied in \cite{ABSS14}. It parallels its approach to an algabraic set theory of categories of classes, but for the $\mathrm{NF}$ context. A category of classes $\mathbf{C}$ is a (Heyting) Boolean category with a subcategory $\mathbf{S}$, satisfying various axioms capturing the notion of `smallness', and with a universal object $U$, such that every object is a subobject of $U$. While the axiomatization of categories of classes naturally focuses on the notion of smallness, the axiomatization in this paper focuses on the notion of type level stratification. Like in \cite{ABSS14}, a restricted notion of power object is obtained, which facilitates interpretation of set theory in the categorical semantics, but the reformulation of the power object axiom needed for the stratified setting is quite different.

$\mathrm{NF}$ may be axiomatized as Extentionality + Stratified Comprehension ($\mathrm{Ext} + \mathrm{SC}$); it avoids the Russell paradox of naive set theory, by restricting comprehension to stratified formulae, a notion to be defined below.  An introduction to $\mathrm{NF}$ is given in \cite{For95}. For any basic claims about $\mathrm{NF}$ in this paper, we implicitly refer to that monograph.

Though the problem of proving the consistency of $\mathrm{NF}$ in terms of a traditional $\mathrm{ZF}$-style set theory turned out to be difficult, Jensen proved the consistency of the subsystem $\mathrm{NFU}$, where $\mathrm{Ext}$ is weakened to $\mathrm{Ext'}$ ($\mathrm{Ext'}$ is extentionality for non-empty sets, thus inviting atoms), in \cite{Jen69}. Jensen used Ramsey's theorem to obtain a particular  model of Mac Lane set theory with an automorphism, and it is relatively straight forward to obtain a model of $\mathrm{NFU} + \textnormal{Infinity} + \textnormal{Choice}$ from that model. There are various interesting axioms that can be added to $\mathrm{NFU}$ to increase its consistency strength. As the understanding of automorphisms of non-standard models of $\mathrm{ZF}$-style set theories has increased, several results on the consistency strength of such extensions of $\mathrm{NFU}$ have been obtained in the work of Solovay \cite{Sol97}, Enayat \cite{Ena04} and McKenzie \cite{McK15}. In this paper we define $\mathrm{NFU}$ as a theory in the language $\{\in, S, \langle -, -\rangle\}$, where $S$ is a predicate distinguishing sets from atoms and $\langle -, -\rangle$ is a primitive pairing function, axiomatized as $\mathrm{Ext}_S + \mathrm{SC}_S + \mathrm{P} + \mathrm{Sethood}$, where $\mathrm{Ext}_S$ and $\mathrm{SC}_S$ are extensionality and stratified comprehension for sets, P regulates $\langle -, -\rangle$ and Sethood regulates $S$, to be specified below. $\mathrm{NFU}$ proves Infinity and is equiconsistent with Mac Lane set theory; $\mathrm{Ext}_S + \mathrm{SC}_S + \mathrm{Sethood}$ is weaker and does not prove Infinity. In this paper, $\mathrm{NF}$ is defined as $\mathrm{NFU}$ + ``everything is a set'', which (in classical logic) is equivalent to $\mathrm{Ext} + \mathrm{SC}$. An introduction to $\mathrm{NFU}$ and extended systems is given in \cite{Hol98}. For any basic claims about $\mathrm{NFU}$ in this paper, we implicitly refer to that monograph.

The theories $\mathrm{NF}$ and $\mathrm{NFU}$ in intuitionistic logic will be referred to as $\mathrm{INF}$ and $\mathrm{INFU}$, respectively. Note that the way $\mathrm{NFU}$ and $\mathrm{NF}$ are axiomatized in this paper, the intuitionistic versions $\mathrm{INFU}$ and $\mathrm{INF}$ also satisfy e.g. the axiom of ordered pair. But if $\mathrm{INF}$ were axiomatized as $\mathrm{Ext} + \mathrm{SC}$ with intuitionistic logic, as done e.g. in \cite{Dzi95}, it is not clear that the resulting intuitionsitic theory would be as strong.

$\mathrm{NF}$ and $\mathrm{NFU}$ also have finite axiomatizations, as shown in \cite{Hai44}, which clarify that their ``categories of sets and functions'' are Boolean categories. In this paper certain extentions of the theories of (Heyting) Boolean categories (in the language of category theory) are proved equiconsistent to $\mathrm{(I)NF(U)}$, respectively.

\section{Stratified set theory and class theory}\label{Stratified set theory and class theory}

Let $\mathcal{L_\mathsf{Set}} = \{\in, S, \langle -, - \rangle\}$ be the language of set theory augmented with a unary predicate symbol $S$ of ``sethood\hspace{1pt}'' and a binary function symbol $\langle - , - \rangle$ of ``ordered pair''. We introduce notation for the ``set-many quantifier'': 
$$\setmany z . \phi \text{ abbreviates } \exists x . \big( S(x) \wedge \forall z . (z \in x \leftrightarrow \phi(z)) \big),$$
where $x$ is chosen fresh, i.e. not free in $\phi$.

\begin{dfn}\label{DefStrat}
Let $\phi$ be an $\mathcal{L_\mathsf{Set}}$-formula. $\phi$ is {\em stratified} if there is a function $s : \mathrm{term}(\phi) \rightarrow \mathbb{N}$, where $\mathrm{term}(\phi)$ is the set of terms occurring in $\phi$, such that for any $u, v, w \in \mathrm{term(\phi)}$ and any atomic subformula $\theta$ of $\phi$,
\begin{enumerate}[(i)]
\item if $u \equiv \langle v, w \rangle$, then $s(u) = s(v) = s(w)$,
\item if $\theta \equiv (u = v)$, then $s(u) = s(v)$,
\item if $\theta \equiv (u \in v)$, then $s(u) + 1 = s(v)$,
\end{enumerate}
where $\equiv$ denotes literal equality (of terms or formulae). Such an $s$ is called a {\em stratification} of $\phi$. $s(u)$ is called the {\em type} of $u$. Clearly, if $\phi$ is stratified, then there is a {\em minimal} stratification in the sense that $s(v) = 0$ for some variable $v$ occurring in $\phi$. Also note that the formula $\langle v, w \rangle = \{\{v\}, \{v, w\}\}$, stipulating that the ordered pair is the Kuratowski ordered pair, is not stratified. Therefore, it is condition (i), read as ``type-level ordered pair'', that gives power to axiom P below. 
\end{dfn}

\begin{ntn}
In the axiomatizations below, $\mathrm{NFU}_\mathsf{Set}$ is the theory thus axiomatized in classical logic, while $\mathrm{INFU}_\mathsf{Set}$ is the theory thus axiomatized in intuitionistic logic. For brevity we simply write $\mathrm{(I)NFU}_\mathsf{Set}$, and similarly for $\mathrm{(I)NF}_\mathsf{Set}$, to talk about the intuitionistic and classical theories in parallel. More generally, any statement that $\mathrm{(I)XX(U)}_\mathrm{K}$ relates to $\mathrm{(I)YY(U)}_\mathrm{L}$ in some way, means that each of the four theories $\mathrm{IXXU}_\mathrm{K}$, $\mathrm{XXU}_\mathrm{K}$, $\mathrm{IXX}_\mathrm{K}$, $\mathrm{XX}_\mathrm{K}$ relates in that way to $\mathrm{IXXU}_\mathrm{L}$, $\mathrm{XXU}_\mathrm{L}$, $\mathrm{IXX}_\mathrm{L}$, $\mathrm{XX}_\mathrm{L}$, respectively. Since we will be proving equiconsistency results between theories in different languages, the language is emphasized as a subscript to the name of the theory. This is why we write $\mathrm{(I)NF(U)}_\mathsf{Set}$ for the set theoretic theory $\mathrm{(I)NF(U)}$. 
\end{ntn}

\begin{ax}[$\mathrm{(I)NFU}_\mathsf{Set}$] \label{SCAx}
\[
\begin{array}{rl}
\mathrm{Ext}_S & (S(x) \wedge S(y) \wedge \forall z . z \in x \leftrightarrow z \in y) \rightarrow x = y \\
\mathrm{SC}_S & \text{For all stratified $\phi$: } \setmany z . \phi(z) \\
\mathrm{P} & \langle x, y \rangle = \langle x\hspace{1pt}', y\hspace{1pt}' \rangle \rightarrow ( x = x\hspace{1pt}' \wedge y = y\hspace{1pt}' ) \\
\textnormal{{ Sethood}} & z \in x \rightarrow S(x) \\
\end{array}
\]
\end{ax}
$\mathrm{Ext}_S$ stands for Extensionality (for Sets), $\mathrm{SC}_S$ stands for Stratified Comprehension (yielding Sets), and $\mathrm{P}$ stands for Ordered Pair. In order to keep the treatment uniform, we axiomatize $\mathrm{(I)NF}_\mathsf{Set}$ as $\mathrm{(I)NFU}_\mathsf{Set}$ + $\forall x . S(x)$. Obviously, $\mathrm{(I)NF}_\mathsf{Set}$ can be axiomatized in the language without the predicate $S$, simply as $\mathrm{Ext} + \mathrm{SC} + \mathrm{P}$ (where $\mathrm{Ext}$ and $\mathrm{SC}$ are like $\mathrm{Ext}_S$ and $\mathrm{SC}_S$, respectively, but without the $S$-conjuncts). Less obviously, $\mathrm{NF}$ proves the negation of Choice \cite{Spe53}, which entails the axiom of Infinity, which in turn enables implementation of type-level ordered pairs. So $\mathrm{NF}$ can be axiomatized as $\mathrm{Ext} + \mathrm{SC}$ in the plain language $\{\in\}$ of set theory.

Note that $\mathrm{SC}_S$ implies the existence of a universal set, denoted $V$. In the context of the sethood predicate, it is natural to restrict the definition of subset to sets. So define
$$x \subseteq y \Leftrightarrow_{\mathrm{df}} S(x) \wedge S(y) \wedge \forall z . (z \in x \rightarrow z \in y).$$
The power set, $\pow y$, of $y$ is defined as $\{z \mid z \subseteq y\}$, and exists by $\mathrm{SC}_S$. Therefore, only sets are elements of power sets. An important special case of this is that $S(x) \leftrightarrow x \in \pow V$. So the axiom $\forall x . S(x)$, yielding $\mathrm{(I)NF}$, may alternatively be written $V = \pow V$. In the meta-theory $\subseteq$ and $\pow$ are defined in the standard way. When proving the existence of functions (coded as sets of ordered pairs) in $\mathrm{(I)NF(U)}$, the type-level requirement of ordered pairs means that the defining formula (in addition to being stratified) needs to have the argument- and value-variable at the same type.

$\mathrm{(I)ML(U)}_\mathsf{Class}$ is the impredicative theory of classes corresponding to $\mathrm{(I)NF(U)}_\mathsf{Set}$. $\mathrm{ML}$ was introduced by Quine in his book \cite{Qui40}. Apparently $\mathrm{ML}$ stands for ``Mathematical Logic'' (the title of that book). There is both a predicative and an impredicative version of $\mathrm{ML}$, and both are equiconsistent with $\mathrm{NF}_\mathsf{Set}$, as proved in \cite{Wan50}. One obtains a model of $\mathrm{ML}$ simply by taking the power set of a model of $\mathrm{NF}$, along with a natural interpretation that suggests itself, so the proof requires enough strength in the meta-theory to handle sets of the size of the continuum. (The equiconsistency between predicative $\mathrm{ML}$ and $\mathrm{NF}$ can be proved in a weaker meta-theory that is only strong enough to handle countable sets.) Without difficulty, the proof extends to equiconsistency between each of the theories $\mathrm{(I)ML(U)}_\mathsf{Class}$ and $\mathrm{(I)NF(U)}_\mathsf{Set}$, respectively. For the purpose of completeness, a proof of $\mathrm{Con}(\mathrm{(I)NF(U)}_\mathsf{Set}) \Rightarrow  \mathrm{Con}(\mathrm{(I)ML(U)}_\mathsf{Class})$ is provided below.

The theory $\mathrm{(I)ML(U)}_\mathsf{Cat}$, which the author introduces in this research as a category theory of $\mathrm{(I)NF(U)}$, probably corresponds better to predicative $\mathrm{(I)ML(U)}$. The difficult and interesting direction of the proof of equiconsistency between $\mathrm{(I)ML(U)}_\mathsf{Cat}$ and $\mathrm{(I)NF(U)}_\mathsf{Set}$ is the interpretation of $\mathrm{(I)NF(U)}_\mathsf{Set}$ in $\mathrm{(I)ML(U)}_\mathsf{Cat}$.

We axiomatize $\mathrm{(I)ML(U)}_\mathsf{Class}$ in a one-sorted language $\mathcal{L}_\mathsf{Class}$ that augments $\mathcal{L}_\mathsf{Set}$ with a unary predicate $C$ and a unary predicate $\mathrm{Setom}$. We read $C(x)$ as ``$x$ is a {\em class}'' and read $S(x)$ as ``$x$ is a {\em set}''.  Moreover, ``Setom'' is a portmanteau for ``sets and atoms''. $\mathrm{Setom}(x) \wedge \neg S(x)$ is read as ``$x$ is an {\em atom}''. We treat the pairing function as a partial function; formally we take it to be a ternary relation symbol, but we write it in functional notation. For convenience, we introduce the abbreviations $\exists \vec{x} \in \mathrm{Setom} . \phi$ and $\forall \vec{x} \in \mathrm{Setom} . \phi$ for $\exists \vec{x} . ((\mathrm{Setom}(x_1) \wedge \dots \wedge \mathrm{Setom}(x_n)) \wedge \phi)$ and $\forall \vec{x} . ((\mathrm{Setom}(x_1) \wedge \dots \wedge \mathrm{Setom}(x_n)) \rightarrow \phi)$, respectively, where $\vec{x} = (x_1, \dots, x_n)$ for some $n \in \mathbb{N}$. We say that such quantifiers are {\em bounded} to $\mathrm{Setom}$.

\begin{ax}[$\mathrm{(I)MLU}_\mathsf{Class}$] \label{AxIMLU}
\[
\begin{array}{rl}
\textnormal{C-hood} & z \in x \rightarrow C(x) \\
\textnormal{Sm-hood} & z \in x \rightarrow \mathrm{Setom}(z) \\
\mathrm{Ext}_C & (C(x) \wedge C(y) \wedge \forall z . (z \in x \leftrightarrow z \in y)) \rightarrow x = y \\
\mathrm{CC}_C & \text{For all $\phi$: }  \exists x . \big( C(x) \wedge \forall z \in \mathrm{Setom} . (z \in x \leftrightarrow \phi(z))\big) \\
\mathrm{SC}_S & \text{For all stratified $\phi$ with only $z, \vec{y}$ free: } \\
& \forall \vec{y} \in \mathrm{Setom} . \exists x \in \mathrm{Setom} . \forall z \in \mathrm{Setom} . (z \in x \leftrightarrow \phi(z, \vec{y})) \\
\textnormal{P} & \forall x, y, x\hspace{1pt}', y\hspace{1pt}' \in \mathrm{Setom} . \\ 
& ( \langle x, y \rangle = \langle x\hspace{1pt}', y\hspace{1pt}' \rangle \leftrightarrow ( x = x\hspace{1pt}' \wedge y = y\hspace{1pt}' ) ) \\
S = \mathrm{Sm} \cap C & S(x) \leftrightarrow (\mathrm{Setom}(x) \wedge C(x))
\end{array}
\]
\end{ax}
C-hood stands for Classhood, S-hood stands for Setomhood, $\mathrm{Ext}_C$ stands for Extensionality (for classes), $\mathrm{CC}_C$ stands for Class Comprehension (yielding classes), and $S = \mathrm{Sm} \cap C$ stands for Set equals Setom Class. In $\mathrm{CC}_C$ and $\mathrm{SC}_S$, we assume that $x$ is fresh, i.e. not free in $\phi$. We obtain $\mathrm{(I)ML}_\mathsf{Class}$ by adding the axiom $\forall x \in \mathrm{Setom} . S(x)$. Predicative $\mathrm{(I)ML(U)}_\mathsf{Class}$ is obtained by requiring in $\mathrm{CC}_C$ that all quantifiers in $\phi$ are bounded to $\mathrm{Setom}$.

The leftwards arrow has been added to the Ordered Pair axiom, because the partial function of ordered pair is formally treated as a ternary relation symbol. One might find it natural to add the axiom $\neg C(x) \rightarrow \mathrm{Setom}(x)$, but since we will not need it, the author prefers to keep the axiomatization more general and less complicated.

The extension of $\mathrm{Setom}$ may be thought of as the collection of sets and atoms, but although $\forall x \in\mathrm{Setom} . (S(x) \vee \neg S(x))$ follows from the law of excluded middle in $\mathrm{MLU}_\mathsf{Class}$, this proof does not go through intuitionistically; the author does not expect it to be provable in $\mathrm{IMLU}_\mathsf{Class}$. Note that it follows from the axioms that Sethood (restricted to Setoms) holds, i.e. that $\forall x \in \mathrm{Setom} . ( z \in x \rightarrow S(x) )$. 

The predicate $S$ is clearly redundant in the sense that it is definable, but it is convenient to have it in the language. This more detailed presentation is chosen because it makes it easy to see that $\mathrm{(I)ML(U)}_\mathsf{Class}$ interprets $\mathrm{(I)NF(U)}_\mathsf{Set}$: For any axiom of $\mathrm{(I)NF(U)}_\mathsf{Set}$, simply interpret it as the formula obtained by replacing each subformula of the form $\boxminus x . \phi$ by $\boxminus x \in \mathrm{Setom} . \phi$, for each $\boxminus \in \{\exists, \forall\}$. One may also obtain a model of $\mathrm{(I)NF(U)}_\mathsf{Set}$ from a model of $\mathrm{(I)ML(U)}_\mathsf{Class}$, by restricting its domain to the extension of $\mathrm{Setom}$ and then taking the reduct to $\mathcal{L}_\mathsf{Set}$. 

We now proceed towards showing that the consistency of $\mathrm{(I)NF(U)}_\mathsf{Set}$ implies the consistency of $\mathrm{(I)ML(U)}_\mathsf{Class}$. The idea of the proof is straightforward: we start with a model of $\mathrm{(I)NF(U)}_\mathsf{Set}$ and add all the possible subsets of this structure as new elements to model the classes, with the obvious extension of the $\in$-relation. However, the proof involves some detail of presentation, especially if we do it directly for intuitionistic Kripke models. So here we start off with the classical case, showing how to construct a model of $\mathrm{ML(U)}_\mathsf{Class}$ from a model of $\mathrm{NF(U)}_\mathsf{Set}$. After the categorical semantics has been introduced, we will be able to perform the same proof in the categorical semantics of any topos (Theorem \ref{ToposModelOfML}). The proof below is therefore redundant, but it may help the reader unfamiliar with categorical semantics to compare the two.

\begin{prop}\label{ConSetConClass prop}
If there is a model of $\mathrm{NF(U)}_\mathsf{Set}$, then there is a model of $\mathrm{ML(U)}_\mathsf{Class}$.
\end{prop}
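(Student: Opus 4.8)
The plan is to realise the informal recipe ``adjoin every subset of the $\mathrm{NF(U)}$-universe as a new class'' as an explicit first-order structure and then check the axioms of Axioms \ref{AxIMLU} one at a time. Fix a model $\mathcal{M} \models \mathrm{NF(U)}_\mathsf{Set}$ with domain $M$, and for $a \in M$ write $E(a) = \{ b \in M : b \in^{\mathcal{M}} a \}$ for its $\in^{\mathcal{M}}$-extension; by $\mathrm{Ext}_S$ the map $E$ is injective on the sets of $\mathcal{M}$, while every atom and the empty set share the extension $\emptyset$. Arranging tags so that $M$ is disjoint from $\pow(M)$, I would take as the domain of the new structure $\mathcal{N}$ the set
\[
N = M \cup \{\, X \in \pow(M) : X \neq E(a) \text{ for every set } a \text{ of } \mathcal{M} \,\},
\]
the second summand furnishing the ``proper classes''. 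The interpretations are the evident ones: $\mathrm{Setom}^{\mathcal{N}} = M$; $S^{\mathcal{N}} = \{ a \in M : \mathcal{M} \models S(a) \}$; $C^{\mathcal{N}}$ consists of the sets of $\mathcal{M}$ together with all the new objects; membership is $b \in^{\mathcal{N}} a \Leftrightarrow b \in^{\mathcal{M}} a$ for $a, b \in M$ and $b \in^{\mathcal{N}} X \Leftrightarrow b \in X$ for a proper class $X$ (nothing else is ever a member, so exactly the setoms are members and exactly the classes have members); and $\langle -, - \rangle^{\mathcal{N}}$ is the restriction of $\langle -, - \rangle^{\mathcal{M}}$ to $M$, undefined once a class is fed in.

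With these definitions most axioms are immediate. Setomhood and Classhood hold because members are always elements of $M$ and, by Sethood in $\mathcal{M}$, a setom with a member is a set hence a class; Ordered Pair transfers verbatim from P in $\mathcal{M}$ (the added converse being trivial, and pairs of setoms landing in $M$); and Set equals Setom Class holds by construction, since a setom is a class exactly when it is a set and no new object is a setom. For $\mathrm{Ext}_C$ I would argue by cases on two classes with the same setom-members: two sets coincide by $\mathrm{Ext}_S$, two proper classes coincide as subsets of $M$, and a set can never agree with a proper class, the latter having been chosen not to be realised as any $E(a)$.

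The two comprehension schemes are the heart of the matter. For $\mathrm{SC}_S$ I would reduce each instance to the corresponding instance in $\mathcal{M}$: the parameters $\vec{y}$ are setoms, and for a stratified $\phi$ the truth of $\phi(a, \vec{y})$ in $\mathcal{N}$ at $a \in M$ is computed by the obvious translation of $\mathcal{L}_\mathsf{Class}$ into $\mathcal{L}_\mathsf{Set}$ that turns Setom-bounded quantifiers into ordinary ones and reads $S$, $C$, $\mathrm{Setom}$ off $\mathcal{M}$ on setoms; this translation is again stratified because the unary predicates impose no constraint on types, so $\{ a \in M : \mathcal{N} \models \phi(a, \vec{y}) \} = E(b)$ for some set $b$ of $\mathcal{M}$ by $\mathrm{SC}_S$ in $\mathcal{M}$, and $b$ witnesses the axiom in $\mathcal{N}$. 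For $\mathrm{CC}_C$, given any $\phi$ with parameters from $N$, the collection $X = \{ a \in M : \mathcal{N} \models \phi(a) \}$ is a subset of $M$, hence is either some $E(b)$ (take the set $b$ as the required class) or a proper-class object (take $X$ itself); in both cases its setom-members are precisely the witnesses of $\phi$.

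The one genuine obstacle lurks in this last step. When $\phi$ carries unbounded class quantifiers, deciding $\mathcal{N} \models \phi(a)$ ranges over all of $N$, hence over $\pow(M)$, so that merely forming the subset $X$ presupposes a separation principle in the metatheory whose instances quantify over $\pow(M)$; this is exactly the point at which the argument needs a metatheory strong enough to handle sets of size continuum. For predicative $\mathrm{(I)ML(U)}_\mathsf{Class}$, where $\mathrm{CC}_C$ is restricted to Setom-bounded $\phi$, the step descends to $M$ and a metatheory handling countable sets suffices. Everything above is uniform in the presence or absence of atoms: passing to the $\mathrm{NF}$ case merely adds $\forall x \in \mathrm{Setom} . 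S(x)$, which holds in $\mathcal{N}$ precisely because $\mathcal{M} \models \forall x . S(x)$ leaves no atoms among the setoms.
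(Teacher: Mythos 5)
Your proposal is correct and follows essentially the same route as the paper: the paper realises the amalgamation abstractly via two injections $t : N \rightarrow M$ and $p : \mathcal{P}(N) \rightarrow M$ glued along the condition $t(x) = p(E(x))$ for sets $x$, whereas you keep the original domain and adjoin only the unrealised subsets, which yields an isomorphic structure; the axiom checks (including the reduction of $\mathrm{SC}_S$ to the $\mathrm{NF(U)}$-model via Setom-bounded quantification and the case split in $\mathrm{Ext}_C$ and $\mathrm{CC}_C$) match the paper's. Your closing remark on the metatheoretic strength needed for $\mathrm{CC}_C$ versus its predicative restriction likewise mirrors the paper's own comments surrounding the proposition.
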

\begin{proof}
We concentrate on the case $\mathrm{Con(NFU)}_\mathsf{Set} \Rightarrow \mathrm{Con(MLU)}_\mathsf{Class}$. Afterwards it will be easy to see the modifications required for the other case. We take care to do this proof in intuitionistic logic, as it will be relevant later on.

Let $\mathcal{N} = ( N, S^\mathcal{N}, \in^\mathcal{N}, P^\mathcal{N} )$ be a model of $\mathrm{NFU}$. Define a model 
$$\mathcal{M} = ( M , C^\mathcal{N} , \mathrm{Setom}^\mathcal{M} , S^\mathcal{N} , \in^\mathcal{M} , P^\mathcal{N}   )$$ 
as follows. Since $\mathcal{N} \models \mathrm{Ext}_S$, it is straightforward to construct a set $M$ with an injection $p : \mathcal{P}(N) \rightarrow M$ and an injection $t : N \rightarrow M$, such that 
$$\forall x \in N . \forall y \in \mathcal{P}(N) . \big( t(x) = p(y) \leftrightarrow (x \in S^\mathcal{N} \wedge y = \{u \in N \mid u \in^\mathcal{N} x \} ) \big).$$ 
Take $M$ as the domain of $\mathcal{M}$.
$$
\begin{array}{rcl}
C^\mathcal{M}  & =_\mathrm{df} & \{p(y) \mid y \in \mathcal{P}(N ) \} \\
\mathrm{Setom}^\mathcal{M}  & =_\mathrm{df} & \{t (x) \mid x \in N \}  \\
S^\mathcal{M}  & =_\mathrm{df} & \{t (x) \mid x \in S^\mathcal{N} \} \\
u \in^\mathcal{M} v & \Leftrightarrow_\mathrm{df} & \exists x \in N . \exists y \in \mathcal{P}(N) . (u = t(x) \wedge v = p(y) \wedge x \in y) \\ 
P^\mathcal{M}  & =_\mathrm{df} & \{\langle t (x), t (y), t (z) \rangle \mid P^\mathcal{N} (x, y) = z\} \\
\end{array}
$$

We now proceed to verify that $\mathcal{M} \models \mathrm{MLU}_\mathsf{Class}$.

Classhood follows from the construction of $C^\mathcal{M}$ and $\in^\mathcal{M} $.

Setomhood follows from the construction of $\mathrm{Setom}^\mathcal{M} $ and $\in^\mathcal{M} $.

Note that $t $ witnesses 
$$\langle N , \in^\mathcal{N} , S^\mathcal{N} , P^\mathcal{N}  \rangle \cong \langle \mathrm{Setom}^\mathcal{M} , \in^\mathcal{M}  \restriction_{\mathrm{Setom}^\mathcal{M} }, S^\mathcal{M} , P^\mathcal{M}  \rangle.$$ 
For by construction, it is easily seen that it is a bijection and that the isomorphism conditions for $S$ and $P$ are satisfied. Moreover, for any $x, x\hspace{1pt}' \in N $, we have 
\[
\begin{array}{rl}
& t (x) \in^\mathcal{M}  t (x\hspace{1pt}') \\ 
\Leftrightarrow & \exists y \in \mathcal{P}(N) . (t(x\hspace{1pt}') = p(y) \wedge x \in y) \\
\Leftrightarrow & \exists y \in \mathcal{P}(N) . ( x\hspace{1pt}' \in S^\mathcal{N} \wedge y = \{u \in N \mid u \in^\mathcal{N} x\hspace{1pt}' \} \wedge x \in y) \\
\Leftrightarrow & x \in^\mathcal{N}  x\hspace{1pt}'.
\end{array}
\]

Since the axioms $\mathrm{SC}_S$ and Ordered Pair in effect have all quantifiers restricted to the extension of $\mathrm{Setom}$, and $\mathcal{N}$ satisfies these axioms, the isomorphism $t$ yields that $\mathcal{M}$ satisfies these axioms as well.

$\mathrm{Ext}_C$ follows from that $t$ is injective and $\forall x \in N . \forall y \in \mathcal{P}(N) . (t(x) \in^\mathcal{M} p(y) \leftrightarrow x \in y)$.

Set equals Setom Class follows from that $v \in C^\mathcal{M} \cap \mathrm{Setom}^\mathcal{M} \Leftrightarrow \exists x \in N . \exists y \in \mathcal{P}(N) . (t(x) = p(y) = v) \Leftrightarrow \exists x \in N . \exists y \in \mathcal{P}(N) . (t(x) = p(y) = v \wedge x \in S^\mathcal{N}) \Leftrightarrow v \in S^\mathcal{M}$.

It only remains to verify that $\mathcal M$ satisfies $\mathrm{CC}_C$. Let $\phi(z)$ be an $\mathcal{L}_\mathsf{Class}$-formula. Let 
$$A = \{u \in N  \mid \mathcal{M}  \models \phi(t (u))\},$$
and note that $\mathcal{M} \models C(p(A))$.

The following implications complete the proof.
$$
\begin{array}{rl}
& A = \{x \in N  \mid \mathcal{M}  \models \phi(t (x))\} \\
\Rightarrow & \forall x \in N  . \big( x \in A \leftrightarrow \mathcal{M}  \models \phi(t (x)) \big) \\
\Rightarrow & \forall u \in \mathrm{Setom}^\mathcal{M}  . \big( u \in^\mathcal{M} p( A) \leftrightarrow \mathcal{M}  \models \phi(u) \big) \\
\Rightarrow & \forall u \in \mathrm{Setom}^\mathcal{M}  . \big( \mathcal{M}  \models (u \in p(A) \leftrightarrow \phi(u)) \big) \\
\Rightarrow & \mathcal{M}  \models \exists X . \big( C(X) \wedge \forall u \in \mathrm{Setom}. (u \in X \leftrightarrow (\phi(u))) \big) \\
\end{array}
$$

To verify the case $\mathrm{Con}(\mathrm{NF}) \Rightarrow \mathrm{Con}(\mathrm{ML})$, note that if $S^\mathcal{N} = N$, then $\mathrm{Setom}^\mathcal{M} = S^\mathcal{M}$.
\end{proof}

For the predicative version of $\mathrm{ML(U)}$, it suffices to consider the set of definable subsets of a model of $\mathrm{NF(U)}$. Thus, a slightly modified version of the above proof can be carried out for the predicative case in an appropriate set theory of countable sets.

\section{Categorical semantics} \label{CatAxioms}

Categories may be viewed as structures in the basic language of category theory. Traditionally, a theory in the first order language of category theory (or an expansion of that language) is formulated as a definition of a class of models. Such definitions, that can be turned into first order axiomatizations, are called elementary. The definitions of classes of categories made in this section are all easily seen to be elementary. 

Now follows a presentation of the categorical semantics of first order logic in Heyting (intuitionistic logic) and Boolean (classical logic) categories. A full account can be found e.g. in \cite[pp. 807-859]{Joh02}. 

It is assumed that the reader is familiar with basic category theoretic notions: Most importantly, the notions of {\em diagram}, {\em cone}, {\em limit} and their duals (in particular, the special cases of {\em terminal object}, {\em initial object}, {\em product} and {\em pullback}), as well as the notions of {\em functor}, {\em natural transformation} and {\em adjoint} functors.

Since the definition of Heyting categories below uses the notion of adjoint functors between partial orders, let us explicitly define this particular case of adjoint functors: Let $\mathbb{A}$ and $\mathbb{B}$ be partial orders with orderings $\leq_\mathbb{A}$ and $\leq_\mathbb{B}$, respectively. They may be considered as categories with the elements of the partial order as objects, and with a single morphism $x \rightarrow y$ if $x \leq y$, and no morphism from $x$ to $y$ otherwise, for all elements $x, y$ in the partial order. The composition of morphisms is the only one possible. Note that a functor from $\mathbb{A}$ to $\mathbb{B}$, as categories, is essentially the same as an order-preserving function from $\mathbb{A}$ to $\mathbb{B}$, as partial orders. Let $\mathbf{F} : \mathbb{A} \leftarrow \mathbb{B}$ and $\mathbf{G} : \mathbb{A} \rightarrow \mathbb{B}$ be functors. $\mathbf{F}$ is {\em left adjoint} to $\mathbf{G}$, and equivalently $\mathbf{G}$ is {\em right adjoint} to $\mathbf{F}$, written $\mathbf{F} \dashv \mathbf{G}$, if for all objects $X$ in $\mathbb{A}$ and all objects $Y$ in $\mathbb{B}$,
\[
\mathbf{F}Y \leq_\mathbb{A} X \Leftrightarrow Y \leq_\mathbb{B} \mathbf{G}X.
\]

A morphism $f$ is a {\em cover} if whenever $f = m \circ g$ for a mono $m$, then $m$ is an isomorphism. A morphism $f$ has an {\em image} if it factors as $f = m \circ e$, where $m$ is a mono with the universal property that if $f = m' \circ e'$ is some factorization with $m'$ mono, then there is a unique $k$ such that $m = m' \circ k$.

\begin{dfn} \label{HeytingBooleanCategory}
A category is a {\em Heyting category} if it satisfies the following axioms (HC).
\begin{enumerate}
\item[(F1)] It has finite limits.
\item[(F2)] It has images.
\item[(F3)] The pullback of any cover is a cover.
\item[(F4)] Each $\mathrm{Sub}_X$ is a sup-semilattice.
\item[(F5)] For each $f : X \rightarrow Y$, the {\em inverse image functor} $f\hspace{2pt}^* : \mathrm{Sub}_Y \rightarrow \mathrm{Sub}_X$ (defined below) preserves finite suprema and has left and right adjoints: $\exists_f \dashv f\hspace{2pt}^* \dashv \forall_f $.
\end{enumerate}

We call this theory HC. $\mathrm{Sub}_X$ and $f\hspace{2pt}^*$ are explained below. One can prove from these axioms that that each $\mathrm{Sub}_X$ is a Heyting algebra. A {\em Boolean category} is a Heyting category such that each $\mathrm{Sub}_X$ is a Boolean algebra. We call that theory BC.

A {\em Heyting} ({\em Boolean}) {\em functor}, is a functor between Heyting (Boolean) categories that preserves the structure above. $\mathbf{C}$ is a {\em Heyting} ({\em Boolean}) {\em subcategory} of $\mathbf{D}$ if it is a subcategory and the inclusion functor is Heyting (Boolean).
\end{dfn}

Let $\mathbf{C}$ be any Heyting category. It has a terminal object $\mathbf{1}$ and an initial object $\mathbf{0}$, as well as a product $X_1 \times \dots \times X_n$, for any $n \in \mathbb{N}$ (in the case $n = 0$, $X_1 \times \dots \times X_n$ is defined as the the terminal object $\mathbf{1}$). Given an $n \in \mathbb{N}$ and a product $P$ of $n$ objects, the $i$-th projection morphism, for $i = 1, \dots , n$, is denoted $\pi_P^i$ (the subscript $P$ will sometimes be dropped when it is clear from the context). If $f_i : Y \rightarrow X_i$ are morphisms in $\mathbb{C}$, for each $i \in \{1, \dots, n\}$ with $n \in \mathbb{N}$, then $\langle f_1, \dots, f_n\rangle : Y \rightarrow X_1 \times \dots \times X_n$ denotes the unique morphism such that $\pi^i \circ \langle f_1, \dots, f_n\rangle = f_i$, for each $i \in \{1, \dots, n\}$. An important instance of this is that $\mathbf{C}$ has a diagonal mono $\Delta_X : X \rightarrowtail X \times X$, for each $X$, defined by $\Delta_X = \langle \id_X, \id_X \rangle$. If $g_i : Y_i \rightarrow X_i$ are morphisms in $\mathbf{C}$, for each $i \in \{1, \dots, n\}$ with $n \in \mathbb{N}$, then $g_1 \times \dots \times g_n : Y_1 \times \dots \times Y_n \rightarrow X_1 \times \dots \times X_n$ denotes the morphism $\langle g_1 \circ \pi^1, \dots, g_n \circ \pi^n \rangle$.

A {\em subobject} of an object $X$ is an isomorphism class of monos $m : Y \rightarrowtail X$ in the slice category $\mathbf{C}/X$. (Two monos $m : Y \rightarrowtail X$ and $m' : Y\hspace{1pt}' \rightarrowtail X$ are isomorphic in $\mathbf{C}/X$ iff there is an isomorphism $f : Y \rightarrow Y\hspace{1pt}'$ in $\mathbf{C}$, such that $m = m' \circ f$.) It is often convenient to denote such a subobject by $Y$, although it is an abuse of notation; in fact we shall do so immediately. The subobjects of $X$ are endowed with a partial order: If $m : Y \rightarrowtail X$ and $m' : Y\hspace{1pt}' \rightarrowtail X$ represent two subobjects $Y$ and $Y\hspace{1pt}'$ of $X$, then we write $Y \leq_X Y\hspace{1pt}'$ if there is a mono from $m$ to $m'$ in $\mathbf{C}/X$ (i.e. if there is a mono $f : Y \rightarrow Y\hspace{1pt}'$ in $\mathbf{C}$, such that $m = m' \circ f$). 

The axioms (F1)--(F5) ensure that for any object $X$, the partial order of subobjects of $X$, denoted $\mathrm{Sub}(X)$, with its ordering denoted $\leq_X$ and its equality relation denoted $\cong_X$ (or just $=$ when the context is clear), is a Heyting algebra, with constants $\bot_X$, $\top_X$ and operations $\wedge_X$, $\vee_X$, $\rightarrow_X$ (we often suppress the subscript when it is clear from the context). Given a morphism $f : X \rightarrow Y$ in $\mathbf{C}$, the functor $f\hspace{2pt}^* : \mathrm{Sub}(Y) \rightarrow \mathrm{Sub}(X)$ is defined by sending any subobject of $Y$, represented by $m_B : B \rightarrowtail Y$, say, to the subobject of $X$ represented by the pullback of $m_B$ along $f$. Given a subobject $A$ of $Y$, represented by a mono $m_A$ with co-domain $Y$, we may write $A^* : \mathrm{Sub}(Y) \rightarrow \mathrm{Sub}(X)$ as an alternative notation for the functor $m_A^*$.

A {\em structure} (or {\em model}) $\mathcal M$, in the categorical semantics of $\mathbf{C}$, in a sorted signature $\mathcal{S}$, is an assignment of sorts, relation symbols and function symbols of $\mathcal{S}$ to objects, subobjects and morphisms of $\mathbf{C}$, respectively, as now to be explained. 

Sorts: Any sort in $\mathcal{S}$ is assigned to an object of $\mathbf{C}$.

Relation symbols: Any relation symbol $R$ on a sort $S_1 \times \dotsc \times S_n$, where $n \in \mathbb{N}$, is assigned to a subobject $R^\mathcal{M} \leq S^\mathcal{M}_1 \times \dotsc \times S^\mathcal{M}_n$. In particular, the equality symbol $=_S$ on the sort $S \times S$ is assigned to the subobject of $S^\mathcal{M} \times S^\mathcal{M}$ determined by $\Delta_{S^\mathcal{M}} : S^\mathcal{M} \rightarrowtail S^\mathcal{M} \times S^\mathcal{M}$. In the case $n = 0$, $S^\mathcal{M}_1 \times \dotsc \times S^\mathcal{M}_n$ is the terminal object $\mathbf{1}$. Thus, we can handle $0$-ary relation symbols. By the above, such a symbol is assigned to a subobject of $\mathbf{1}$. For example, the unique morphism $\mathbf{1} \rightarrow \mathbf{1}$ and the unique morphism $\mathbf{0} \rightarrow \mathbf{1}$ represent subobjects of $\mathbf{1}$. In the semantics explained below, the former corresponds to truth and the latter corresponds to falsity.

Function symbols: Any function symbol $f : S_1 \times \dotsc \times S_n \rightarrow T$, where $n \in \mathbb{N}$, is assigned to a morphism $f^\mathcal{M} : S^\mathcal{M}_1 \times \dotsc \times S^\mathcal{M}_n \rightarrow T^\mathcal{M}$. Note that in the case $n = 0$, $f$ is assigned to a morphism $1 \rightarrow T$. In this case, we say that $f$ is a {\em constant symbol}.

Let $m, n \in \mathbb{N}$ and let $k \in \{1, \dots, n\}$. The $\mathcal M$-{\em interpretation} $\llbracket \vec{x} : S_1 \times \dotsc \times S_n \mid t \rrbracket^\mathcal{M}$ (which may be abbreviated $\llbracket \vec{x} \mid t \rrbracket$ when the structure and the sorts of the variables are clear) of a term $t$ of sort $T$ in context $\vec x$ of sort $S_1 \times \dotsc \times S_n$ is a morphism $S^\mathcal{M}_1 \times \dotsc \times S^\mathcal{M}_n \rightarrow T^\mathcal{M}$ defined recursively:
\[
\begin{array}{rcl}
\llbracket \vec{x} \mid x_k \rrbracket &=_\mathrm{df}& \pi^k : S^\mathcal{M}_1 \times \dotsc \times S^\mathcal{M}_n \rightarrow S^\mathcal{M}_k \\
\llbracket \vec{x} \mid f\hspace{2pt}(t_1, \dotsc, t_m) \rrbracket &=_\mathrm{df}&
\mathcal M f \circ \langle \llbracket \vec x \mid t_1 \rrbracket, \dotsc, \llbracket \vec{x} \mid t_m \rrbracket \rangle : \\
&& S^\mathcal{M}_1 \times \dotsc \times S^\mathcal{M}_n \rightarrow W^\mathcal{M}, 
\end{array}
\]
where $t_1, \dotsc, t_m$ are terms of sorts $T_1, \dotsc, T_m$, respectively, and $f$ is a function symbol of sort $T_1 \times \dotsc \times T_m \rightarrow W$. 

The $\mathcal M$-{\em interpretation} $\llbracket \vec{x} : S_1 \times \dotsc \times S_n \mid \phi \rrbracket^\mathcal{M}$ (which may be abbreviated $\llbracket \vec{x} \mid \phi \rrbracket$ when the structure and the sorts of the variables are clear) of a formula $\phi$ in context $\vec x$ of sort $S_1 \times \dotsc \times S_n$ is defined recursively:
\[
\begin{array}{rcl}
\llbracket \vec{x} \mid \bot \rrbracket &=_\mathrm{df}& [\bot \rightarrowtail S^\mathcal{M}_1 \times \dotsc \times S^\mathcal{M}_n] \\
\llbracket \vec{x} \mid R(\vec{t}) \rrbracket &=_\mathrm{df}& \llbracket \vec{x} \mid \vec{t} \rrbracket^* (R^\mathcal{M}), \\
&& \text{where $R$ is a relation symbol and $\vec{t}$ are terms.} \\
\llbracket \vec{x} \mid \chi \odot \psi \rrbracket &=_\mathrm{df}& \llbracket \vec{x} \mid \chi \rrbracket \odot \llbracket \vec{x} \mid \psi \rrbracket \text{, where $\odot \in \{\wedge, \vee, \rightarrow\}$.} \\
\llbracket \vec{x}\setminus \{x_k\} \mid \boxminus x_k . \psi \rrbracket &=_\mathrm{df}& \boxminus_{\langle \pi^1, \dotsc, \pi^{k-1}, \pi^{k+1}, \dotsc, \pi^n \rangle} (\llbracket \vec{x} \mid \psi \rrbracket), \\
&& \text{where $\boxminus \in \{\forall, \exists\}$.}
\end{array}
\]
Recall that $\llbracket \vec{x} \mid \vec{t} \rrbracket^*(R^\mathcal{M})$ is obtained by taking the pullback of a representative of $R^\mathcal{M}$ along $\llbracket \vec{x} \mid \vec{t} \rrbracket$. The denotation of $\boxminus_{\langle \pi^1, \dotsc, \pi^{k-1}, \pi^{k+1}, \dotsc, \pi^n \rangle}$ is given in axiom (F5) of Heyting categories above. 

We say that $\phi(\vec x)$ is {\em valid} in $\mathcal{M}$, and write $\mathcal{M} \models \phi$, whenever $\llbracket \vec{x} \mid \phi(\vec{x}) \rrbracket$ equals the maximal subobject $S^\mathcal{M}_1 \times \dotsc \times S^\mathcal{M}_n$ of $\mathrm{Sub}(S^\mathcal{M}_1 \times \dotsc \times S^\mathcal{M}_n)$. In particular, if $\phi$ is a sentence, then $\mathcal{M} \models \phi$ iff $\llbracket \cdot : \cdot \mid \phi \rrbracket = \mathbf{1}$, where the notation ``$\cdot : \cdot$'' stands for the empty sequence of variables in the $0$-ary context. It is of course more convenient to write $\llbracket \cdot : \cdot \mid \phi \rrbracket$ simply as $\llbracket \phi \rrbracket$.

When working with this semantics it is sometimes convenient to use the following well-known rules:
\[
\begin{array}{rcl}
\llbracket \vec{x} \mid \chi \rrbracket \wedge \llbracket \vec{x} \mid \psi \rrbracket &=& \llbracket \vec{x} \mid \chi \rrbracket^*(\llbracket \vec{x} \mid \psi \rrbracket) \\
\llbracket \vec{x} \mid \chi \rightarrow \psi \rrbracket &\Leftrightarrow& \llbracket \vec{x} \mid \chi \rrbracket \leq \llbracket \vec{x} \mid \psi \rrbracket \\
\llbracket \forall x_1 \dots \forall x_n . \psi \rrbracket = \mathbf{1}  &\Leftrightarrow& \llbracket \vec{x} \mid \psi \rrbracket = S^\mathcal{M}_1 \times \dots \times S^\mathcal{M}_1n,
\end{array}
\]
In the last equivalence, it is assumed that $x_1, \dots, x_n$ are the only free variables of $\phi$.
 
When an interpretation $\mathcal{M}$ of $\mathcal{S}$ in a Heyting category $\mathbf{C}$ is given, we will often simply write ``$\mathbf{C} \models \phi$''. Sometimes it is convenient to extend $\mathcal{S}$ with some objects, morphisms and subobjects of $\mathbf{C}$ as new sorts, function symbols and relation symbols, respectively.

\begin{dfn} \label{NaturalSignature}
Let $\mathbf{C}$ be a Heyting category and let $\mathbf{D}$ be a subcategory of $\mathbf{C}$ with finite products. We define the {\em $\mathbf{D}$-signature with respect to $\mathbf{C}$}, denoted $\mathcal{S}^\mathbf{C}_\mathbf{D}$, as the following signature.
\begin{itemize}
\item Sorts: For each object $A$ of $\mathbf{D}$, $A$ is a sort in $\mathcal{S}^\mathbf{C}_\mathbf{D}$.
\item Function symbols: For each morphism $f : A \rightarrow B$ of $\mathbf{D}$, $f : A \rightarrow B$ is a function symbol in $\mathcal{S}^\mathbf{C}_\mathbf{D}$ from the sort $A$ to the sort $B$.
\item Relation symbols: For each $n \in \mathbb{N}$, and for each morphism $m : A \rightarrow B_1 \times \dots \times B_n$ of $\mathbf{D}$, such that $m$ is monic in $\mathbf{C}$, $m$ is an $n$-ary relation symbol in $\mathcal{S}^\mathbf{C}_\mathbf{D}$ on the sort $B_1 \times \dots \times B_n$. (Note that in the case $n = 0$, $B_1 \times \dots \times B_n$ is the terminal object $\mathbf{1}$ of $\mathbf{D}$ and $m$ is a $0$-ary relation symbol.)
\end{itemize}
Given $\mathcal{S}^\mathbf{C}_\mathbf{D}$, the {\em natural $\mathcal{S}^\mathbf{C}_\mathbf{D}$-structure} is defined by assigning each sort $A$ to the object $A$, assigning each function symbol $f$ to the morphism $f$, and assigning each relation symbol $m$ on the sort $B_1 \times \dots \times B_n$ to the subobject of $B_1 \times \dots \times B_n$ in $\mathbf{C}$ determined by $m$. Let $\phi$ be an $\mathcal{S}^\mathbf{C}_\mathbf{D}$-formula. We write $\mathbf{C} \models \phi$ for the statement that $\phi$ is satisfied in the natural $\mathcal{S}^\mathbf{C}_\mathbf{D}$-structure. If no signature has been specified, then $\mathbf{C} \models \phi$ means that $\phi$ is satisfied in the natural $\mathcal{S}^\mathbf{C}_\mathbf{C}$-structure (and it is assumed that $\phi$ is an $\mathcal{S}^\mathbf{C}_\mathbf{C}$-formula).
\end{dfn}

The importance of Heyting categories lies in this well-known result:

\begin{thm}[Completeness for categorical semantics]\label{Completeness}
Intuitionistic and classical first order logic are sound and complete for the categorical semantics of Heyting and Boolean categories, respectively.
\end{thm}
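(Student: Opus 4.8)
The plan is to establish the two directions—soundness and completeness—separately, and to do so uniformly for the Heyting/intuitionistic and Boolean/classical cases, the only difference being that in the classical case each $\mathrm{Sub}(X)$ is additionally assumed Boolean, which is exactly what validates the law of excluded middle.

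For \emph{soundness} I would fix a proof system for (intuitionistic) first order logic presented as a sequent calculus whose sequents $\phi \vdash_{\vec x} \psi$ are read semantically as $\llbracket \vec x \mid \phi \rrbracket \leq \llbracket \vec x \mid \psi \rrbracket$ in $\mathrm{Sub}(X)$, where $X = S^\mathcal{M}_1 \times \dots \times S^\mathcal{M}_n$ is the interpretation of the context. One then argues by induction on derivations that every derivable sequent holds in every interpretation $\mathcal M$ in every Heyting category $\mathbf C$. The propositional rules are validated by the Heyting algebra structure on each $\mathrm{Sub}(X)$ guaranteed by (F1)--(F5); the quantifier rules are validated by the adjunctions $\exists_f \dashv f^* \dashv \forall_f$ of (F5), applied to the projection $f$ that deletes the quantified variable. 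Two auxiliary lemmas carry the weight: a substitution lemma identifying $\llbracket \vec x \mid \phi[t/y] \rrbracket$ with the pullback of $\llbracket \vec x, y \mid \phi \rrbracket$ along $\langle \id, \llbracket \vec x \mid t \rrbracket \rangle$, and the Beck--Chevalley condition ensuring that quantification commutes with substitution along projections, which follows from the stability of the structure under pullback built into (F3) and (F5). In the classical case, the extra double-negation rule is sound precisely because each $\mathrm{Sub}(X)$ is Boolean.

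For \emph{completeness} I would build the syntactic (classifying) category $\mathcal C_T$ of the theory in question, taking $T$ to be pure logic in the relevant signature, which suffices to obtain completeness of the logic. Its objects are $\alpha$-equivalence classes of formulas-in-context $\{\vec x \mid \phi\}$, and a morphism $\{\vec x \mid \phi\} \rightarrow \{\vec y \mid \psi\}$ is a provable-equivalence class of formulas $\theta(\vec x, \vec y)$ that are provably total and single-valued from $\phi$ to $\psi$. The bulk of the argument is to verify that $\mathcal C_T$ is a Heyting category (Boolean when the logic is classical): the terminal object is the empty-context formula $\top$, finite products and pullbacks are formed by conjoining contexts and equations, images are formed by existential quantification, and the poset $\mathrm{Sub}(\{\vec x \mid \phi\})$ is exactly the Lindenbaum--Tarski order of formulas provably implying $\phi$, with $f^*$ given by substitution and its adjoints given by $\exists$ and $\forall$—so that (F5) is just the syntactic adjointness of quantification to substitution. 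One then defines the generic structure $M_T$ by interpreting each basic sort, relation, and function symbol by its tautological formula, and proves the Truth Lemma $\llbracket \vec x \mid \phi \rrbracket^{M_T} = \{\vec x \mid \phi\}$ by induction on $\phi$; hence $\mathcal C_T \models \phi$ if and only if $T \vdash \phi$. Completeness then follows at once: a formula valid in every Heyting (Boolean) category is in particular valid in $\mathcal C_T$ under $M_T$, hence provable.

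The main obstacle is the completeness direction, and within it two interlocking points: checking that $\mathcal C_T$ genuinely satisfies all of (F1)--(F5)—in particular that images and the adjoints $\exists_f, \forall_f$ exist and are stable under pullback—and pushing the Truth Lemma through the quantifier cases, where the Beck--Chevalley condition and careful tracking of contexts and substitutions are essential. These same substitution and Beck--Chevalley facts are the technical crux of soundness as well, so establishing them cleanly at the outset streamlines both halves of the proof. Since the result is classical, I would keep the verification at the level of indicating the key adjunctions rather than expanding every diagram chase.
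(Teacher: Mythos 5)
The paper offers no proof of this theorem: it is stated as a well-known result, with \cite{Joh02} (pp.\ 807--859) as the implicit reference, and your sketch is exactly the standard argument given in such sources --- soundness by induction on derivations using the Heyting (Boolean) algebra structure of each $\mathrm{Sub}(X)$ and the adjunctions $\exists_f \dashv f^* \dashv \forall_f$, with the substitution lemma and Beck--Chevalley condition as the technical core, and completeness via the syntactic category of formulas-in-context together with the truth lemma for its generic model. Your outline is correct and coincides with the proof the paper is implicitly citing.
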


As a first application of the categorical semantics, we shall generalize Proposition \ref{ConSetConClass prop} to the intuitionistic case. This can be done efficiently through the machinery of topos theory.

\begin{dfn}\label{power object}
A {\em topos} is a category with finite limits and power objects. A {\em power object} of an object $A$, is an object $\mathbf{P}A$ along with a mono $m : \hspace{2pt} \in_A \hspace{2pt} \rightarrowtail A \times \mathbf{P}A$ such that for any mono $r : R \rightarrowtail A \times B$, there is a unique morphism $\chi : B \rightarrow \mathbf{P}A$ making this a pullback square:
\[
\begin{tikzcd}[ampersand replacement=\&, column sep=small]
R \ar[rr] \ar[d, rightarrowtail, "r"'] \&\& \in_A \ar[d, rightarrowtail, "m"] \\
A \times B \ar[rr, "{\id \times \chi}"] \&\& A \times \mathbf{P}A 
\end{tikzcd} 
\]
\end{dfn}

The expression ``morphism $\chi : B \rightarrow \mathbf{P}A$ making this a pullback square'' with a pullback-diagram drawn underneath (as above), will be used several times in this text. More formally, it is taken as an abbreviation of ``morphism $\chi : B \rightarrow \mathbf{P}A$ such that $r$ is a pullback of $m$ along $\id \times \chi$'' (where $m$ and $r$ depend as above on the pullback-diagram drawn underneath).

A {\em small} category is a category that can be implemented as a set (i.e. it does not require a proper class). If $\mathbf{C}$ is a small category, then the category $\mathbf{Set}^\mathbf{C}$, of functors from $\mathbf{C}$ to the usual category of sets, with natural transformations as morphisms, is called the {\em category of presheaves of} $\mathbf{C}^\mathrm{op}$.

Here we collect some well-known facts about topoi needed for the proof of Theorem \ref{ToposModelOfML}. Specifically, the last item is Corollary A2.4.3 in \cite{Joh02}.

\begin{prop}\label{topos prop}
Let $\mathbf{C}$ be a small category. Let $\mathbf{Set}$ be the usual category of sets. Let $\mathbf{E}$ be a topos and let $Z$ be an object in $\mathbf{E}$. Let $\mathbf{P}Z$ along with $p_Z : \in_Z \rightarrowtail Z \times \mathbf{P}Z$ be a power object of $Z$ in $\mathbf{E}$.
\begin{enumerate}[{\normalfont (a)}]
\item $\mathbf{Set}^\mathbf{C}$ is a topos.
\item $\mathbf{E}$ is a Heyting category.
\item $\mathbf{E} \models \forall x, y : \mathbf{P}Z . \big( (\forall z : Z . (z \in x \leftrightarrow z \in y)) \rightarrow x = y \big)$
\item For each $\mathcal{S}^\mathbf{E}_\mathbf{E}$-formula $\phi(z, y)$, $\mathbf{E} \models \forall y : Y . \exists x : \mathbf{P}Z . \forall z : Z . (z \in x \leftrightarrow \phi(z, y))$.
\item If $m : A \rightarrow B$ is a mono in $\mathbf{E}$ with a pushout as below,
\[
\begin{tikzcd}[ampersand replacement=\&, column sep=small]
A \ar[rr] \ar[d, rightarrowtail, "m"'] \&\& C \ar[d, "m'"] \\
B \ar[rr] \&\& D 
\end{tikzcd} 
\]
then $m'$ is a mono and the diagram is a pullback.
\end{enumerate}
\end{prop}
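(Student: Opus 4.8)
The six items are all standard facts of elementary and presheaf topos theory, so the plan is to assemble them from the textbook development (e.g.\ \cite[pp.\ 807--859]{Joh02}, or Mac Lane and Moerdijk), supplying the short arguments and recording which standard input each rests on. For (a) I would exhibit the topos structure on $\mathbf{Set}^\mathbf{C}$ directly: finite limits are computed pointwise in $\mathbf{Set}$; the subobject classifier is the presheaf $\Omega$ of sieves, with $\mathrm{true}$ selecting the maximal sieve at each object; and power objects are obtained from $\Omega$ together with the cartesian closed structure, whose exponentials are given by the Yoneda formula. For (b) I would verify (F1)--(F5) of Definition \ref{HeytingBooleanCategory} in an arbitrary topos $\mathbf{E}$: (F1) is immediate from the definition of a topos; (F2) and (F3) hold because every topos is a regular (indeed exact) category, which supplies the cover--mono factorisation and the stability of covers under pullback; (F4) holds because each $\mathrm{Sub}(X) \cong \mathrm{hom}(X, \Omega)$ is a sup-semilattice (in fact a Heyting algebra); and for (F5) the adjoints $\exists_f \dashv f^* \dashv \forall_f$ exist, the left adjoint being image-of-composite, $f^*$ preserving finite suprema by regularity, and the right adjoint $\forall_f$ coming from the internal Heyting structure of $\Omega$ together with Beck--Chevalley.

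Items (c) and (d) are immediate consequences of the universal property of the power object $\mathbf{P}Z$, read through the categorical semantics of Section \ref{CatAxioms}. For (c), two elements of $\mathbf{P}Z$ with the same members name, via the pullback square defining $p_Z$, the same subobject of $Z$; since the classifying map of Definition \ref{power object} is unique, they coincide, and carrying this argument out uniformly in a context yields the displayed universally quantified sentence rather than merely a statement about global points. For (d), the formula $\phi(z,y)$ interprets as a subobject $\llbracket z, y \mid \phi \rrbracket \rightarrowtail Z \times Y$; its classifying map $\chi : Y \to \mathbf{P}Z$ is exactly the required witness $x$, and the defining pullback square says precisely that $z \in \chi(y) \leftrightarrow \phi(z,y)$ holds in the semantics. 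The only care needed is to phrase the existential and the biconditional internally.

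Items (e) and (f) carry the genuine exactness content, and I expect them to be the main obstacle. For (e) I would invoke that every topos is an adhesive category, so that monomorphisms are stable under pushout; alternatively one computes the pushout of a given mono through the subobject classifier and checks directly that the comparison map is monic. For (f), I would first note that the literal reading fails already in $\mathbf{Set}$ (the pushout of $\mathbf{1} \leftarrow \mathbf{2} \to \mathbf{1}$ is not a pullback), so I read the clause as applying to the squares actually used --- pushouts along monos, equivalently the effective-union squares with corners $B \cap C \rightarrowtail B$, $C \rightarrowtail B \cup C$ --- and as asserting that these are bicartesian. This is the standard fact that in a topos pushouts of monos are van Kampen, hence pullbacks, with the converse furnished by effectiveness of unions. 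If a self-contained treatment is preferred, I would derive it from (e) together with the Heyting structure on $\mathrm{Sub}$ secured in (b); otherwise I would simply cite it from the topos-theoretic literature.
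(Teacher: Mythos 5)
Your proposal is correct, and in fact supplies more than the paper does: the paper states this proposition without proof, explicitly deferring to introductory textbooks, so your sketches of (a)--(e) are simply the standard arguments (pointwise limits and the sieve classifier for presheaf topoi; regularity/exactness and the internal Heyting structure of $\Omega$ for the Heyting category axioms; uniqueness and existence of classifying maps, read in context rather than on global points, for (c) and (d); adhesivity for (e)). The one place where you genuinely add value is item (f): you are right that the statement as printed is false --- the pushout of $\mathbf{1} \leftarrow \mathbf{1}+\mathbf{1} \rightarrow \mathbf{1}$ in $\mathbf{Set}$ is not a pullback --- and your restricted reading (pushouts along monomorphisms are also pullbacks, i.e.\ such squares are van Kampen, together with effectiveness of unions for the converse) is exactly the form in which the proposition is invoked in the proof of Theorem \ref{ToposModelOfML}: the only application is to diagram (C) there, which is a pushout of the mono $\chi_S$ along the mono $n_S$. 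So your reading repairs the statement without weakening anything the paper actually needs. The only caveat is presentational: since the paper offers no proof, there is no ``same approach'' to match; a referee would likely ask you either to cite chapter and verse for (e) and the corrected (f) (e.g.\ the adhesivity of topoi, or the standard treatment of unions and pushouts of monos in Johnstone or Mac Lane--Moerdijk) or to include the short direct argument via the subobject classifier, rather than leaving both options open.
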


An {\em intuitionistic Kripke structure} in a first-order language $\mathcal{L}$ on a partial order $\mathbb{P}$, is an $\mathcal{L}$-structure in the categorical semantics of $\mathbf{Set}^\mathbb{P}$. It is well-known and easily verified that this definition is equivalent to the traditional definition, as given e.g. in \cite{Mos15}.

\begin{thm}\label{ToposModelOfML}
Let $\mathbf{E}$ be a topos. In the categorical semantics of $\mathbf{E}$: If there is a model of $\mathrm{(I)NF(U)}_\mathsf{Set}$, then there is a model of $\mathrm{(I)ML(U)}_\mathsf{Class}$.
\end{thm}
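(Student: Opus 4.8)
The plan is to internalize the construction of Proposition \ref{ConSetConClass prop} into the categorical semantics of $\mathbf{E}$, replacing the external passage to the power set by the power object $\mathbf{P}N$ and replacing the external gluing of setoms to classes by a pushout. As in that proof I treat $\mathrm{(I)MLU}_\mathsf{Class}$ explicitly; the case without atoms follows at the end by observing that if the interpretation of $S$ is the whole domain then $\mathrm{Setom}^\mathcal{M}$ coincides with $S^\mathcal{M}$.

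So suppose a model $\mathcal{N}$ of $\mathrm{(I)NF(U)}_\mathsf{Set}$ in $\mathbf{E}$ is given, consisting of an object $N$, subobjects $S^\mathcal{N} \rightarrowtail N$ and $\in^\mathcal{N} \rightarrowtail N \times N$, and the pairing relation $P^\mathcal{N}$. First I would form the \emph{extension morphism} $e : S^\mathcal{N} \to \mathbf{P}N$: pulling back $\in^\mathcal{N}$ along $\id_N \times i$, where $i : S^\mathcal{N} \rightarrowtail N$ is the inclusion, yields a mono into $N \times S^\mathcal{N}$, and the universal property of $\mathbf{P}N$ (Definition \ref{power object}) produces a unique $e$ classifying it, so that internally $z \in_N e(s) \leftrightarrow z \in^\mathcal{N} s$. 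Since $\mathcal{N} \models \mathrm{Ext}_S$, the morphism $e$ is monic. I would then define $M$ as the pushout of $N \xleftarrow{i} S^\mathcal{N} \xrightarrow{e} \mathbf{P}N$, with coprojections $t : N \to M$ and $p : \mathbf{P}N \to M$. Here the two topos-specific facts do the essential work: by Proposition \ref{topos prop}(e) the pushout of a mono is a mono, so both $t$ and $p$ are monic; and by Proposition \ref{topos prop}(f) the square is simultaneously a pullback, so that $\mathrm{im}(t)$ and $\mathrm{im}(p)$ meet exactly in the common image $S^\mathcal{M} := \mathrm{im}(t \circ i) = \mathrm{im}(p \circ e)$.

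With $M$ in hand I would define the $\mathcal{L}_\mathsf{Class}$-structure $\mathcal{M}$ by taking $M$ as the sort, $\mathrm{Setom}^\mathcal{M} = \mathrm{im}(t)$, $C^\mathcal{M} = \mathrm{im}(p)$, $S^\mathcal{M}$ as above, and $\in^\mathcal{M}$ as the image of $p_N : \in_Z \rightarrowtail N \times \mathbf{P}N$ under the mono $t \times p$, transporting the pairing relation along $t$. Classhood and Setomhood are then immediate, since $\in^\mathcal{M}$ is by construction contained in $\mathrm{Setom}^\mathcal{M} \times C^\mathcal{M}$. A short internal computation, mirroring the chain of equivalences in Proposition \ref{ConSetConClass prop}, shows that $t$ is an isomorphism of $\mathcal{N}$ onto the setom part of $\mathcal{M}$ preserving $\in$, $S$ and $P$ (the only subtlety being that $t(x) \in^\mathcal{M} t(x')$ holds precisely when $x'$ is a set, which is exactly when $t(x') = p(e(x'))$ lands in $C^\mathcal{M}$). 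Since $\mathrm{SC}_S$ and Ordered Pair are bounded to setoms, they transfer across $t$; $\mathrm{Ext}_C$ reduces to power-object extensionality, Proposition \ref{topos prop}(c), together with monicity of $p$; and Set equals Setom Class is precisely the pullback property of the pushout square.

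The main point, and the step I expect to be the real obstacle, is $\mathrm{CC}_C$. Given an $\mathcal{L}_\mathsf{Class}$-formula $\phi(z, \vec{y})$ I would form the $\mathcal{S}^\mathbf{E}_\mathbf{E}$-formula $\phi(t(z'), \vec{y})$ with $z' : N$, whose interpretation is the pullback of $\llbracket \phi \rrbracket$ along $t$; this is legitimate because $\mathbf{E}$ is a Heyting category (Proposition \ref{topos prop}(b)), so the quantifiers of $\phi$ ranging over $M$ are interpreted as genuine internal quantifiers. Applying power-object comprehension, Proposition \ref{topos prop}(d), to this formula yields a morphism $\chi : \vec{Y} \to \mathbf{P}N$ such that internally $z' \in_N \chi(\vec{y}) \leftrightarrow \phi(t(z'), \vec{y})$, and $X := p \circ \chi$ witnesses the existential in $\mathrm{CC}_C$: it lands in $C^\mathcal{M}$, and for a setom $z = t(z')$ one has $z \in^\mathcal{M} X \leftrightarrow z' \in_N \chi(\vec{y}) \leftrightarrow \phi(z, \vec{y})$, using that $t \times p$ is monic. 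Everything is carried out in the internal intuitionistic logic of $\mathbf{E}$, so the argument is uniform across the four theories, and the only genuinely delicate bookkeeping is keeping the translation between the bound-to-$\mathrm{Setom}$ quantifiers of $\mathcal{L}_\mathsf{Class}$ and internal quantification over $N$ versus $M$ straight, which is exactly what the isomorphism $t$ and the comprehension scheme (d) are designed to handle. Finally, for $\mathrm{(I)ML}$ I would note that if $S^\mathcal{N} = N$ then $i$ is an isomorphism, whence $\mathrm{Setom}^\mathcal{M} = S^\mathcal{M}$ and $\forall x \in \mathrm{Setom} . S(x)$ holds.
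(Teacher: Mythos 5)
Your proposal is correct and follows essentially the same route as the paper's proof: classifying the membership relation restricted to sets to obtain a mono $S^\mathcal{N} \rightarrowtail \mathbf{P}N$ (the paper's $\chi_S$), forming the pushout of this with $S^\mathcal{N} \rightarrowtail N$ to build $M$, and invoking exactly the facts that pushouts of monos are monos and are simultaneously pullbacks to transfer the axioms, with $\mathrm{CC}_C$ handled by power-object comprehension composed with the coprojection into $C^\mathcal{M}$. The only differences are cosmetic (the paper verifies monicity of $\chi_S$ in slightly more detail via the uniqueness of classifying maps, and your $\in_Z$ should read $\in_N$).
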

\begin{proof}
This result follows immediately from the proof of Proposition \ref{ConSetConClass prop}, because that proof can literally be carried out in the internal language of any topos. (It is well-known that one can safely reason from the axioms of a weak intuitionistic set theory in this internal language.) However, for the reader's convenience we shall also give the proof in its interpreted form, in the language of category theory.

The intuitionistic and classical cases correspond to the cases that $\mathbf{E}$ is Heyting and Boolean, respectively. The symbol $\in$ is used for the element-relations associated with power objects in $\mathbf{E}$, and use the symbol $\epsin$ for the element-relation symbol in $\mathcal{L}_\mathsf{Set}$ and $\mathcal{L}_\mathsf{Class}$. The object interpreting the domain of $\mathcal{N}$ is denoted $N$. This means that we have a mono $n_S : S^\mathcal{N} \rightarrowtail N$ interpreting the sethood predicate $S$, a morphism $n_P : N \times N \rightarrow N$ interpreting the pairing function $\langle -, - \rangle$, and a mono $n_{\epsin} : \epsin^\mathcal{N} \rightarrow N \times N$ interpreting the element-relation $\epsin$. 

Sethood: $\mathcal{N} \models z \epsin x \rightarrow S(x)$ means that 
$$\llbracket x, y : N \mid x \epsin y \rrbracket \leq \llbracket x, y : N \mid S(y) \rrbracket = N \times S^\mathcal{N},$$ 
so there is a mono $n'_{\epsin} : \epsin^\mathcal{N} \rightarrowtail N \times S^\mathcal{N}$, such that $(\id_N \times n_S) \circ n'_{\epsin} = n_{\epsin}$.

Ordered Pair: $\mathcal{N} \models \langle x, y \rangle = \langle x\hspace{1pt}', y\hspace{1pt}' \rangle \rightarrow ( x = x\hspace{1pt}' \wedge y = y\hspace{1pt}' )$ means that $n_P : N \times N \rightarrowtail N$ is monic.

$\mathrm{Ext}_S$: $\mathcal{N} \models (S(x) \wedge S(y) \wedge \forall z . (z \epsin x \leftrightarrow z \epsin y)) \rightarrow x = y$ implies that for any pullback-square of the form below, $\chi$ is the unique morphism making this a pullback-square:
\[
\begin{tikzcd}[ampersand replacement=\&, column sep=small]
R \ar[rr] \ar[d, rightarrowtail, "r"'] \&\& {\epsin^\mathcal{N}} \ar[d, rightarrowtail, "n'_{\epsin}"] \\
N \times B \ar[rr, "{\id \times \chi}"] \&\& N \times S^\mathcal{N}
\end{tikzcd} 
\tag{A}
\]
To see this, we shall work with the natural $\mathcal{S}^{\mathbf{E}}_{\mathbf{E}}$-structure, which expands $\mathcal{N}$. Let $b, b\hspace{1pt}' : B \rightarrow S^\mathcal{N}$, such that $r$ is a pullback of $n'_{\epsin}$, both along $\id \times b$ and along $\id \times b\hspace{1pt}'$. By the categorical semantics, $r$ then represents both $\llbracket z : N, v : B \mid z \epsin b(v) \rrbracket$ and $\llbracket z : N, v : B \mid z \epsin b\hspace{1pt}'(v) \rrbracket$. So 
$$\mathbf{E} \models \forall v : B . \forall z : N . (z \epsin b(v) \leftrightarrow z \epsin b\hspace{1pt}'(v)),$$ whence by $\mathbf{E} \models \mathrm{Ext}_S$, we have $\mathbf{E} \models \forall v : B . b(v) = b\hspace{1pt}'(v)$. It follows that $b = b\hspace{1pt}'$.

$\mathrm{SC}_S$: For all stratified $\phi$, $\mathcal{N} \models \setmany z . \phi(z, y)$. Although this remark is not needed for the proof, it may help to clarify: $\mathcal{N} \models \mathrm{SC}_S$ implies that for any stratified $\mathcal{L}_\mathsf{Set}$-formula $\phi(z, y)$, there is a morphism $\chi : N \rightarrow S^\mathcal{N}$ making this a pullback-square:
\[
\begin{tikzcd}[ampersand replacement=\&, column sep=small]
{\llbracket z, y : N \mid \phi(z, y) \rrbracket} \ar[rr] \ar[d, rightarrowtail, "r"'] \&\& {\epsin^\mathcal{N}} \ar[d, rightarrowtail, "n'_{\epsin}"] \\
N \times N \ar[rr, "{\id \times \chi}"] \&\& N \times S^\mathcal{N}
\end{tikzcd}
\]

By the pullback-property of the power object $\mathbf{P}N$ of $N$, there is a unique $\chi_S$ making this a pullback-square:
\[
\begin{tikzcd}[ampersand replacement=\&, column sep=small]
{\epsin^\mathcal{N}} \ar[rr] \ar[d, rightarrowtail, "n'_{\epsin}"'] \&\& {\in_N} \ar[d, rightarrowtail, "n_{\in_N}"] \\
N \times S^\mathcal{N} \ar[rr, "{\id \times \chi_S}"] \&\& N \times \mathbf{P}N
\end{tikzcd} 
\tag{B}
\]
By combining (A) with (B), we find that $\chi_S$ is monic: Let $b, b\hspace{1pt}' : B \rightarrow S^\mathcal{N}$, such that $\chi_S \circ b = \chi_S \circ b\hspace{1pt}'$. Let $r : R \rightarrowtail N \times B$ and $r' : R' \rightarrowtail N \times B$ be the pullbacks of $n'_{\epsin}$ along $\id_N \times b$ and $\id_N \times b\hspace{1pt}'$, respectively. Consider these pullbacks as instances of (A) above. ``Gluing\hspace{1pt}'' each of these pullback-diagrams with (B) along the common morphism $n'_{\epsin}$, yields two new pullback-diagrams with the bottom morphisms $\mathrm{id}_N \times (\chi_S \circ b)$ and $\mathrm{id}_N \times (\chi_S \circ b\hspace{1pt}')$, respectively. (It is a basic and well-known property of pullbacks that such a ``gluing\hspace{1pt}'' of two pullback yields another pullback.) We know that these bottom morphisms are equal. Thus, by uniqueness of pullbacks up to isomorphism, we may assume that $r = r'$ and $R = R'$. Now it follows from the uniqueness of $\chi$ in (A) that $b = b\hspace{1pt}'$.

We proceed to construct an $\mathcal{L}_\mathsf{Class}$-structure $\mathcal{M}$ in $\mathbf{E}$, such that $\mathcal{M} \models \mathrm{(I)MLU}$. The domain $M$ of the structure is constructed as this pushout:
\[
\begin{tikzcd}[ampersand replacement=\&, column sep=small]
S^\mathcal{N} \ar[rr, rightarrowtail, "\chi_S"] \ar[d, rightarrowtail, "n_S"'] \&\& {\mathbf{P}N} \ar[d, rightarrowtail, "m_C"] \\
N \ar[rr, rightarrowtail, "m_{\mathrm{Setom}}"] \&\& M
\end{tikzcd} 
\tag{C}
\]
By Proposition \ref{topos prop}, $m_{\mathrm{Setom}}$ and $m_C$ are monic.

We interpret the predicate $\mathrm{Setom}$ by the mono $m_{\mathrm{Setom}} : N \rightarrowtail M$ and the classhood predicate $C$ by the mono $m_C : \mathbf{P}N \rightarrowtail M$. Naturally, we interpret $S$ by the mono $m_S =_\mathrm{df} n_\mathrm{Setom} \circ n_S : S^\mathcal{N} \rightarrowtail M$ and the partial ordered pair function by the mono $m_P =_\mathrm{df} m_{\mathrm{Setom}} \circ n_P : N \times N \rightarrow M$. (Formally, ordered pair is treated as a ternary relation symbol in $\mathcal{L}_\mathsf{Class}$, which is interpreted by $\llbracket x, y, z : M \mid \exists x\hspace{1pt}', y\hspace{1pt}', z\hspace{1pt}' : N . (m_{\mathrm{Setom}}(x\hspace{1pt}') = x \wedge m_{\mathrm{Setom}}(y\hspace{1pt}') = y \wedge m_{\mathrm{Setom}}(z\hspace{1pt}') = z \wedge m_P(x\hspace{1pt}', y\hspace{1pt}') = z\hspace{1pt}' \rrbracket$.)

We interpret the element-relation $\epsin$ by the mono
\begin{align*}
m_{\epsin} &=_\mathrm{df} (m_{\mathrm{Setom}} \times \id_M) \circ (\id_N \times m_C) \circ n_{\in_N} \\
&= (m_{\mathrm{Setom}} \times m_C) \circ n_{\in_N},
\end{align*} 
from  $\in_N$ to $M \times M$. Moreover, let $m'_{\epsin} = (\id_N \times m_C) \circ n_{\in_N}$, so that $m_{\epsin} = (m_{\mathrm{Setom}} \times \id_M) \circ m'_{\epsin}$. Now consider this diagram, obtained by gluing (B) on top of the diagram ``$N \times$ (C)'':
\[
\begin{tikzcd}[ampersand replacement=\&, column sep=small]
{\epsin^\mathcal{N}} \ar[rr] \ar[d, rightarrowtail, "n'_{\epsin}"'] \&\& {\in_N} \ar[d, rightarrowtail, "n_{\in_N}"] \\
N \times S^\mathcal{N} \ar[rr, rightarrowtail, "\id \times \chi_S"] \ar[d, rightarrowtail, "\id \times n_S"'] \&\& {N \times \mathbf{P}N} \ar[d, rightarrowtail, "\id \times m_C"] \\
N \times N \ar[rr, rightarrowtail, "\id \times m_{\mathrm{Setom}}"] \&\& N \times M
\end{tikzcd} 
\tag{D}
\]

The lower square is a pushout because (C) is, so by Proposition \ref{topos prop} it is a pullback. Since (B) is also a pullback, we have by a basic well-known result that (D) is also a pullback. It follows that $\llbracket z, x : N \mid z \epsin x \rrbracket ^\mathcal{N} \cong_{N \times N} \llbracket z, x : N \mid m_{\mathrm{Setom}}(z) \epsin m_{\mathrm{Setom}}(x) \rrbracket ^\mathcal{M}$. In other words, the interpretations of $\epsin$ in $\mathcal{N}$ and $\mathcal{M}$ agree on $N$ as a subobject of $M$ represented by $m_\mathrm{Setom}$. We can now easily verify the axioms of $\mathrm{(I)MLU}_\mathsf{Class}$.

Classhood: $\mathcal{M} \models z \epsin x \rightarrow C(x)$ follows from that $m_{\epsin} = (m_{\mathrm{Setom}} \times \id_M) \circ (\id_N \times m_C) \circ n_{\in_N}$.

Setomhood: $\mathcal{M} \models z \epsin x \rightarrow \mathrm{Setom}(x)$ also follows from that $m_{\epsin} = (m_{\mathrm{Setom}} \times \id_M) \circ (\id_N \times m_C) \circ n_{\in_N}$.

$\mathrm{Ext}_C$: $\mathcal{M} \models (C(x) \wedge C(y) \wedge \forall z . (z \epsin x \leftrightarrow z \epsin y)) \rightarrow x = y$ follows from that $\mathbf{E} \models \forall x, y : \mathbf{P}N . \big( (\forall z : N . (z \in x \leftrightarrow z \in y)) \rightarrow x = y \big)$ (see Proposition \ref{topos prop}), that $m_\mathrm{Setom}$ is monic, and that $m_{\epsin} = (m_{\mathrm{Setom}} \times \id_M) \circ (\id_N \times m_C) \circ n_{\in_N}$.

$\mathrm{CC}_C:$ For all $\mathcal{L}_\mathsf{Class}$-formulae $\phi$, $\mathcal{M} \models \exists x . \big( C(x) \wedge \forall z \in \mathrm{Setom} . (z \epsin x \leftrightarrow \phi(z))\big)$, follows from that $\mathbf{E} \models \exists x : \mathbf{P}N . \forall z : N . (z \in x \leftrightarrow \phi(z))$ and that $m_{\epsin} = (m_{\mathrm{Setom}} \times \id_M) \circ (\id_N \times m_C) \circ n_{\in_N}$.

$\mathrm{SC}_S:$ For all stratified $\mathcal{L}_\mathsf{Class}$-formulae $\phi$ with only $z, \vec{y}$ free, $\mathcal{M} \models \forall \vec{y} \in \mathrm{Setom} . \setmany z . \phi(z, \vec{y})$, follows from that $\mathcal{N} \models \mathrm{SC}_S$, and that 
$$\llbracket x, y : N \mid x \epsin y \rrbracket ^\mathcal{N} \cong_{N \times N} \llbracket x, y : N \mid m_{\mathrm{Setom}}(x) \epsin m_{\mathrm{Setom}}(y) \rrbracket ^\mathcal{M}.$$

Ordered Pair: $\mathcal{M} \models \forall x, y, x\hspace{1pt}', y\hspace{1pt}' \in \mathrm{Setom} . ( \langle x, y \rangle = \langle x\hspace{1pt}', y\hspace{1pt}' \rangle \leftrightarrow ( x = x\hspace{1pt}' \wedge y = y\hspace{1pt}' ) )$, follows from that $\mathcal{N} \models \textnormal{Ordered Pair}$, that $m_\mathrm{Setom}$ is monic, and that $m_P = m_\mathrm{Setom} \circ n_P$.

Set equals Setom Class: $\mathcal{M} \models S(x) \leftrightarrow (\mathrm{Setom}(x) \wedge C(x))$, follows from that $m_S = m_\mathrm{Setom} \circ n_S$ and that $n_S$ is a pullback of $m_C$ along $m_\mathrm{Setom}$, as seen in diagram (C).

This concludes the verification of $\mathcal{M} \models \mathrm{(I)MLU}_\mathsf{Class}$. For the case without atoms, note that if $ \mathcal{N} \models \forall x . S(x)$, then $n_S$ is an iso, so since $m_S = m_\mathrm{Setom} \circ n_S$, we have $\mathcal{M} \models \forall x . (S(x) \leftrightarrow \mathrm{Setom}(x))$.
\end{proof}

\begin{cor}\label{ConSetConClass}
$\mathrm{(I)NF(U)}_\mathsf{Set}$ is equiconsistent to $\mathrm{(I)ML(U)}_\mathsf{Class}$.
\end{cor}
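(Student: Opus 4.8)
The plan is to split the equiconsistency into its two consistency implications. The reverse implication is a routine relative interpretation, while the forward implication is obtained by feeding the two relevant completeness theorems into Theorem \ref{ToposModelOfML}.

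For the implication $\mathrm{Con}(\mathrm{(I)ML(U)}_\mathsf{Class}) \Rightarrow \mathrm{Con}(\mathrm{(I)NF(U)}_\mathsf{Set})$, I would use the interpretation already indicated after Axioms \ref{AxIMLU}: translate an $\mathcal{L}_\mathsf{Set}$-formula into an $\mathcal{L}_\mathsf{Class}$-formula by bounding every quantifier to $\mathrm{Setom}$. It then remains to check that the translation of each axiom of $\mathrm{(I)NF(U)}_\mathsf{Set}$ is derivable in $\mathrm{(I)ML(U)}_\mathsf{Class}$; this is immediate, since $\mathrm{SC}_S$, P and $\mathrm{Ext}_S$ are already stated with $\mathrm{Setom}$-bounded quantifiers in the class theory, Sethood holds for Setoms, and in the $\mathrm{NF}$-case $\forall x \in \mathrm{Setom}.\, S(x)$ was added to $\mathrm{(I)ML}_\mathsf{Class}$. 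Because relative interpretation is a finitary proof-theoretic reduction valid in both intuitionistic and classical logic, any derivation of $\bot$ in $\mathrm{(I)NF(U)}_\mathsf{Set}$ yields one in $\mathrm{(I)ML(U)}_\mathsf{Class}$.

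For the implication $\mathrm{Con}(\mathrm{(I)NF(U)}_\mathsf{Set}) \Rightarrow \mathrm{Con}(\mathrm{(I)ML(U)}_\mathsf{Class})$, the task is to produce, from mere consistency, a model of the set theory living in the categorical semantics of some topos, so that Theorem \ref{ToposModelOfML} applies. In the classical case, G\"odel completeness gives an ordinary model $\mathcal{N} \models \mathrm{NF(U)}_\mathsf{Set}$; since Tarski semantics coincides with the categorical semantics of the Boolean topos $\mathbf{Set}$, this is already a model of $\mathrm{NF(U)}_\mathsf{Set}$ in $\mathbf{Set}$ (alternatively one simply invokes Proposition \ref{ConSetConClass prop}). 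In the intuitionistic case, completeness for Kripke semantics gives a Kripke model on some poset $\mathbb{P}$, which by the remark preceding Theorem \ref{ToposModelOfML} is precisely a model in the categorical semantics of $\mathbf{Set}^\mathbb{P}$, and $\mathbf{Set}^\mathbb{P}$ is a Heyting topos by Proposition \ref{topos prop}(a). In either case we obtain a model of $\mathrm{(I)NF(U)}_\mathsf{Set}$ in a topos $\mathbf{E}$, so Theorem \ref{ToposModelOfML} produces a model of $\mathrm{(I)ML(U)}_\mathsf{Class}$ in the same $\mathbf{E}$; soundness of the categorical semantics (Theorem \ref{Completeness}) then gives $\mathrm{Con}(\mathrm{(I)ML(U)}_\mathsf{Class})$.

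I expect no serious obstacle at the level of the corollary itself: the genuinely delicate work, namely the pushout construction of the class model and the internal verification of the $\mathrm{(I)MLU}_\mathsf{Class}$-axioms, has already been carried out in Theorem \ref{ToposModelOfML}. The one point deserving care is the uniform packaging of the four versions of the theory. Here the crucial observation is that an intuitionistic Kripke structure is nothing but a model in a presheaf topos $\mathbf{Set}^\mathbb{P}$, which lets the single topos-theoretic theorem absorb both the classical (Tarski) and the intuitionistic (Kripke) completeness inputs and thereby cover all of $\mathrm{(I)NF(U)}_\mathsf{Set}$ at once.
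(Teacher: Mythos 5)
Your proposal is correct and follows essentially the same route as the paper: the $\Leftarrow$ direction via the $\mathrm{Setom}$-bounding interpretation noted after Axioms \ref{AxIMLU}, and the $\Rightarrow$ direction via Kripke/classical completeness to get a model in a presheaf topos, followed by Theorem \ref{ToposModelOfML}. The only cosmetic difference is that the paper obtains the classical case by taking $\mathbb{P}$ to be a singleton rather than invoking $\mathbf{Set}$ or Proposition \ref{ConSetConClass prop} directly, which amounts to the same thing.
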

\begin{proof}
The $\Leftarrow$ direction was established directly after Axioms \ref{AxIMLU}. For the $\Rightarrow$ direction: By the completeness theorem for intuitionistic predicate logic and Kripke models, there is a Kripke model of $\mathrm{(I)NF(U)}_\mathsf{Set}$, i.e. there is a partial order $\mathbb{P}$ and an $\mathcal{L}_\mathsf{Set}$-structure $\mathcal{N}$ in $\mathbf{Set}^\mathbb{P}$, such that $\mathcal{N} \models \mathrm{(I)NF(U)}_\mathsf{Set}$. By Proposition \ref{topos prop}, $\mathbf{Set}^\mathbb{P}$ is a topos, so it follows from Theorem \ref{ToposModelOfML} that there is a Kripke model of $\mathrm{(I)ML(U)}_\mathsf{Class}$. The classical cases are obtained by setting $\mathbb{P}$ to a singleton.
\end{proof}

\begin{rem}\label{remML}
An equiconsistency statement is trivial unless the consistency strength of the theories considered is at least that of the meta-theory. It is folklore that the consistency strength of $\mathrm{NFU}_\mathsf{Set}$ is at least that of a weak set theory called Mac Lane set theory (by \cite{Jen69} it is at most that), and that the category of presheaves is a topos with Mac Lane set theory as meta-theory, so for the classical case the statement is non-trivial. Moreover, if one unpacks the above equiconsistency proof, one finds that the full Powerset axiom is not needed. It suffices that powersets of countable sets exist, to construct the needed Kripke structure. The strengths of $\mathrm{INF(U)}_\mathsf{Set}$ have not been studied much, so the non-triviality of the above equiconsistency statement needs to be taken as conditional in that case. Regardless of these matters, the proof of Theorem \ref{ToposModelOfML} is constructive and yields information on the close relationship between $\mathrm{(I)NF(U)_\mathsf{Set}}$ and $\mathrm{(I)ML(U)}_\mathsf{Class}$ (also in the intuitionistic case).
\end{rem}

\section{Stratified categories of classes}\label{Formulation of ML_CAT}

We now proceed to introduce a new categorical theory, intended to characterize the categorical content of predicative $\mathrm{(I)ML(U)}_\mathrm{Class}$. For comparison, let us first recall the definition of topos.

We need a couple of more definitions: A functor $\mathbf{F} : \mathbf{C} \rightarrow \mathbf{D}$ is {\em conservative} if for any morphism $f$ in $\mathbf{C}$, if $\mathbf{F}(f\hspace{2pt})$ is an isomorphism then $f$ is an isomorphism. A subcategory is {\em conservative} if its inclusion functor is conservative. A {\em universal object} in a category $\mathbf{C}$ is an object $X$, such that for every object $Y$ there is a mono $f : Y \rightarrowtail X$. The theory $\mathrm{IMLU}_\mathsf{Cat}$ is axiomatized as follows. 

\begin{dfn}[$\mathrm{IMLU}_\mathsf{Cat}$] \label{IMLU_CAT}
A {\em stratified category of classes} (or an $\mathrm{IMLU}$-category) is a pair of Heyting categories $( \mathbf{M}, \mathbf{N})$, such that
\begin{itemize}
\item $\mathbf{N}$ is a conservative Heyting subcategory of $\mathbf{M}$, 
\item there is an object $U$ in $\mathbf{N}$ which is universal in $\mathbf{N}$,
\item there is an endofunctor $\mathbf{T}$ on $\mathbf{M}$, restricting to an endofunctor of $\mathbf{N}$ (also denoted $\mathbf{T}$), along with a natural isomorphism $\iota : \id_\mathbf{M} \xrightarrow{\sim} \mathbf{T}$ on $\mathbf{M}$, 
\item there is an endofunctor $\mathbf{P}$ on $\mathbf{N}$, such that for each object $A$ in $\mathbf{N}$, $\mathbf{T}A$ has a power object $\mathbf{P}A$, $m_{\subseteq^{\mathbf{T}}_A} : \hspace{1pt} \subseteq^{\mathbf{T}}_A \hspace{1pt} \rightarrowtail \mathbf{T} A \times \mathbf{P} A$ in $\mathbf{N}$,
\item  there is a natural isomorphism $\mu : \mathbf{P} \circ \mathbf{T} \xrightarrow{\sim} \mathbf{T} \circ \mathbf{P}$ on $\mathbf{N}$.
\end{itemize}

If ``Heyting\hspace{1pt}'' is replaced with ``Boolean'' throughout the definition, then we obtain the theory $\mathrm{MLU}_\mathsf{Cat}$. If $U \cong \mathbf{P}U$ is added to $\mathrm{(I)MLU}_\mathsf{Cat}$, then we obtain the theories $\mathrm{(I)ML}_\mathsf{Cat}$, respectively.

For convenience we spell out the power object condition in detail: $m_{\subseteq^{\mathbf{T}}_A} : \hspace{1pt} \subseteq^{\mathbf{T}}_A \hspace{1pt} \rightarrowtail \mathbf{T} A \times \mathbf{P} A$ is a mono in $\mathbf{N}$, such that for any mono $r : R \rightarrowtail \mathbf{T} A \times B$ in $\mathbf{N}$, there is a unique $\chi : B \rightarrow \mathbf{P} A$ in $\mathbf{N}$, making this a pullback square in $\mathbf{N}$:
\begin{equation} \label{PT}
\begin{tikzcd}[ampersand replacement=\&, column sep=small]
R \ar[rr] \ar[d, rightarrowtail, "r"'] \&\& \subseteq^{\mathbf{T}}_A \ar[d, rightarrowtail, "{m_{\subseteq^{\mathbf{T}}_A}}"] \\
\mathbf{T} A \times B \ar[rr, "{\id \times \chi}"] \&\& \mathbf{T} A \times \mathbf{P} A 
\end{tikzcd} 
\tag{PT}
\end{equation}
\end{dfn}

In order to carry over some intuitions from a stratified set theory such as $\mathrm{NFU}$, $\mathbf{T}A$ may be thought of as $\{\{x\} \mid x \in A\}$ and $\mathbf{P}A$ may be thought of as $\{ X \mid X \subseteq A\}$. Now $\subseteq^{\mathbf{T}}_A$ corresponds to the subset relation on $\mathbf{T}A \times \mathbf{P}A$. Note that on this picture, $\subseteq^{\mathbf{T}}$ is very similar to the $\in$-relation. Thus (\ref{PT}) is intended to be the appropriate variant for stratified set theory of the power object axiom of topos theory. These intuitions are made precise in the proof of Theorem \ref{ConClassConCat}, where we interpret $\mathrm{(I)ML(U)}_\mathsf{Cat}$ in $\mathrm{(I)ML(U)}_\mathsf{Class}$.

It is easily seen that this axiomatization is elementary, i.e. it corresponds to a theory in a first order language $\mathcal{L}_\mathsf{Cat}$. Its precise specification involves quite some detail. Suffice to say that the language of category theory is augmented with relation symbols $\mathbf{M}_\mathrm{Ob}$, $\mathbf{M}_\mathrm{Mor}$, $\mathbf{N}_\mathrm{Ob}$, and $\mathbf{N}_\mathrm{Mor}$; a constant symbol $U$; and function symbols $\mathbf{T}_\mathrm{Ob}$, $\mathbf{T}_\mathrm{Mor}$, $\iota$, $\mu$, $\mathbf{P}_\mathrm{Ob}$ and $\mathbf{P}_\mathrm{Mor}$ (using the same names for the symbols and their interpretations, and where the subscripts $\mathrm{Ob}$ and $\mathrm{Mor}$ indicate the component of the functor acting on objects and morphisms, respectively).

Note that the definition can easily be generalized, so that we merely require that $\mathbf{N}$ is a Heyting category that is mapped into $\mathbf{M}$ by a faithful conservative Heyting functor $\mathbf{F} : \mathbf{N} \rightarrow \mathbf{M}$. This would not hinder any of the results below. We choose the more specific definition in terms of a subcategory because it simplifies the statements of the results.

We need a relativized notion of power object.

\begin{dfn}
Let $\mathbf{C}$ be a category, and let $\mathbf{D}$ be a subcategory of $\mathbf{C}$. A {\em power object in} $\mathbf{C}$ {\em with respect to} $\mathbf{D}$, of an object $A$ in $\mathbf{D}$, is defined as in Definition \ref{power object}, except that $r$ is assumed to be in $\mathbf{D}$ and $m, \chi$ are required to be in $\mathbf{D}$. More precisely, it is an object $\mathbf{P}A$ along with a morphism $m : \hspace{2pt} \in \hspace{2pt} \rightarrowtail A \times \mathbf{P}A$ in $\mathbf{D}$ which is monic in $\mathbf{C}$, such that for any $r : R \rightarrowtail A \times B$ in $\mathbf{D}$ which is monic in $\mathbf{C}$, there is a morphism $\chi : B \rightarrow \mathbf{P}A$ in $\mathbf{D}$, which is the unique morphism in $\mathbf{C}$ making this a pullback square in $\mathbf{C}$:
\[
\begin{tikzcd}[ampersand replacement=\&, column sep=small]
R \ar[rr] \ar[d, rightarrowtail, "r"'] \&\& \in \ar[d, rightarrowtail, "m"] \\
A \times B \ar[rr, "{\id \times \chi}"] \&\& A \times \mathbf{P}A 
\end{tikzcd} 
\]
\end{dfn}

We shall now collect a few useful properties of $\mathrm{(I)ML(U)}$-categories. First a definition: A functor $\mathbf{F} : \mathbf{B} \rightarrow \mathbf{C}$ {\em reflects finite limits} if for any finite diagram $\mathbf{D} : \mathbf{I} \rightarrow \mathbf{B}$ and for any cone $\Lambda$ of $\mathbf{D}$ in $\mathbf{B}$, if $\mathbf{F}\Lambda$ is a limit in $\mathbf{C}$ of $\mathbf{F} \circ \mathbf{D} : \mathbf{I} \rightarrow \mathbf{C}$, then $\Lambda$ is a limit of $\mathbf{D} : \mathbf{I} \rightarrow \mathbf{B}$ in $\mathbf{B}$.

\begin{prop} \label{strenghten def}
Let $( \mathbf{M}, \mathbf{N})$ along with $U$, $\mathbf{T}$, $\iota$, $\mathbf{P}$ and $\mu$ be an $\mathrm{IMLU}$-category.
\begin{enumerate}[{\normalfont (a)}]
\item \label{strengthen mono} For any morphism $f : A \rightarrow B$ in $\mathbf{N}$, $f$ is monic in $\mathbf{N}$ iff $f$ is monic in $\mathbf{M}$.
\item \label{strengthen reflect limits} The inclusion functor of $\mathbf{N}$ (as a subcategory) into $\mathbf{M}$ reflects finite limits.
\item \label{strengthen power} For any $A$ in $\mathbf{N}$, $\mathbf{P}A$ along with $m_{\subseteq^{\mathbf{T}}_A}$, as in {\em (\ref{PT})} above, is a power object of $\mathbf{T}A$ in $\mathbf{M}$ with respect to $\mathbf{N}$.
\item \label{strengthen T heyting} $\mathbf{T} : \mathbf{M} \rightarrow \mathbf{M}$ is a Heyting endofunctor. If $( \mathbf{M}, \mathbf{N})$ is an $\mathrm{MLU}$-category, then $\mathbf{T} : \mathbf{M} \rightarrow \mathbf{M}$ is a Boolean endofunctor.
\item \label{strengthen T limits} $\mathbf{T} : \mathbf{N} \rightarrow \mathbf{N}$ preserves finite limits.
\end{enumerate}
\end{prop}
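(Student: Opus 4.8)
The plan is to exploit throughout that the inclusion $\mathbf{J}\colon\mathbf{N}\hookrightarrow\mathbf{M}$ is a conservative Heyting functor; in particular it preserves finite limits and isomorphisms, reflects isomorphisms, and $\mathbf{N}$ itself has all finite limits. I would prove the parts in the order (a), (b), (c), (d), (e), since (c) depends on (a) and (b), and (e) on (b) and (d). For part~(\ref{strengthen mono}) the direction ``monic in $\mathbf{M}$ implies monic in $\mathbf{N}$'' is immediate by restricting the class of test morphisms. For the converse I would use the kernel-pair criterion: a morphism is monic exactly when the two projections of its kernel pair coincide. As $\mathbf{N}$ has finite limits, the kernel pair of $f$ exists in $\mathbf{N}$, and since $\mathbf{J}$ preserves finite limits it is also the kernel pair of $f$ in $\mathbf{M}$ with the very same projection morphisms; hence these projections are equal in $\mathbf{N}$ iff they are equal in $\mathbf{M}$. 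For part~(\ref{strengthen reflect limits}), given a finite diagram $\mathbf{D}\colon\mathbf{I}\to\mathbf{N}$ and a cone $\Lambda$ over it in $\mathbf{N}$ whose image $\mathbf{J}\Lambda$ is a limit in $\mathbf{M}$, I would take the actual limit cone $\Lambda'$ of $\mathbf{D}$ in $\mathbf{N}$ and the comparison morphism $u$ in $\mathbf{N}$ between the apices. Then $\mathbf{J}u$ is the comparison between the two $\mathbf{M}$-limits $\mathbf{J}\Lambda$ and $\mathbf{J}\Lambda'$, hence an isomorphism; conservativity makes $u$ an isomorphism in $\mathbf{N}$, so $\Lambda$ is a limit in $\mathbf{N}$.

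Part~(\ref{strengthen power}) is the crux. Given a mono $r\colon R\rightarrowtail\mathbf{T}A\times B$ in $\mathbf{N}$, part~(\ref{strengthen mono}) reads it as monic in $\mathbf{M}$, so axiom~(\ref{PT}) yields $\chi\colon B\to\mathbf{P}A$ in $\mathbf{N}$, unique in $\mathbf{M}$ with the pullback property there. I must promote this to the genuine power-object property in $\mathbf{N}$ of Definition~\ref{power object}: that $m_{\subseteq^{\mathbf{T}}_A}$ and $r$ are monic in $\mathbf{N}$ (immediate from~(\ref{strengthen mono})); that the square is a pullback in $\mathbf{N}$; and that $\chi$ is unique among $\mathbf{N}$-morphisms. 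Uniqueness transfers at once: any $\mathbf{N}$-morphism making the square a pullback in $\mathbf{N}$ does so in $\mathbf{M}$ by preservation of finite limits, hence equals $\chi$ by~(\ref{PT}). To obtain a pullback in $\mathbf{N}$, I would form the pullback $Q$ of $m_{\subseteq^{\mathbf{T}}_A}$ along $\id\times\chi$ inside $\mathbf{N}$, with monic projection $q\colon Q\rightarrowtail\mathbf{T}A\times B$; by preservation $Q$ is the same pullback in $\mathbf{M}$, so $q$ and $r$ name the same subobject of $\mathbf{T}A\times B$ in $\mathbf{M}$.

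The one delicate point --- the main obstacle --- is that the top edge of the square supplied by~(\ref{PT}) lives a priori only in $\mathbf{M}$, since $\mathbf{N}$ need not be a full subcategory, so part~(\ref{strengthen reflect limits}) cannot be applied to that square directly. I would circumvent this by showing that $r$ and $q$ are isomorphic as subobjects already in $\mathbf{N}$: form the meet $R\times_{\mathbf{T}A\times B}Q$ in $\mathbf{N}$; because $r$ and $q$ are equal subobjects in $\mathbf{M}$, its two monic projections become isomorphisms in $\mathbf{M}$, and as these projections lie in $\mathbf{N}$, conservativity makes them isomorphisms in $\mathbf{N}$. Composing yields an isomorphism $R\cong Q$ in $\mathbf{N}$ over $\mathbf{T}A\times B$, which carries the $\mathbf{N}$-internal top edge of the $Q$-pullback to an $\mathbf{N}$-morphism $R\rightarrow\subseteq^{\mathbf{T}}_A$. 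The original square is then isomorphic in $\mathbf{N}$ to the $Q$-pullback and hence a pullback in $\mathbf{N}$, completing~(\ref{strengthen power}).

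For part~(\ref{strengthen T heyting}) I would use that $\iota\colon\id_\mathbf{M}\xrightarrow{\sim}\mathbf{T}$ exhibits $\mathbf{T}$ as naturally isomorphic to the identity functor: every constituent of Heyting (resp.\ Boolean) structure in Definition~\ref{HeytingBooleanCategory} is specified by finite limits, images, and the adjoints $\exists_f\dashv f^*\dashv\forall_f$, all of which a natural isomorphism preserves; since $\id_\mathbf{M}$ is trivially a Heyting (Boolean) functor, so is $\mathbf{T}$. For part~(\ref{strengthen T limits}), $\mathbf{T}\colon\mathbf{M}\to\mathbf{M}$ preserves finite limits by~(\ref{strengthen T heyting}); given a finite limit cone in $\mathbf{N}$, its image under $\mathbf{J}$ is a limit in $\mathbf{M}$, and since $\mathbf{T}$ restricts to $\mathbf{N}$ (so $\mathbf{J}\mathbf{T}=\mathbf{T}\mathbf{J}$) applying $\mathbf{T}$ in $\mathbf{M}$ keeps it a limit; part~(\ref{strengthen reflect limits}) then reflects it back to a limit in $\mathbf{N}$, so $\mathbf{T}\colon\mathbf{N}\to\mathbf{N}$ preserves finite limits.
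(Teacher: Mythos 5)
Your proposal is correct and follows essentially the same route as the paper's proof for all five parts: the kernel-pair/pullback characterization of monos for (\ref{strengthen mono}), the comparison-morphism-plus-conservativity argument for (\ref{strengthen reflect limits}), transfer of the pullback and of uniqueness of $\chi$ between $\mathbf{N}$ and $\mathbf{M}$ for (\ref{strengthen power}), natural isomorphism with $\id_\mathbf{M}$ for (\ref{strengthen T heyting}), and the combination of (\ref{strengthen T heyting}) with (\ref{strengthen reflect limits}) for (\ref{strengthen T limits}). The one genuine divergence is in (\ref{strengthen power}): the paper simply invokes (\ref{strengthen reflect limits}) to conclude that (\ref{PT}) is a pullback in $\mathbf{N}$, which tacitly treats the unnamed top edge $R \rightarrow \hspace{2pt}\subseteq^{\mathbf{T}}_A$ of the square as a morphism of $\mathbf{N}$, whereas the axiom only guarantees it in $\mathbf{M}$ (and $\mathbf{N}$ need not be full). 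You flag this and repair it by forming the pullback $Q$ of $m_{\subseteq^{\mathbf{T}}_A}$ along $\id \times \chi$ inside $\mathbf{N}$ and using the meet of $r$ and $q$ together with conservativity to produce an $\mathbf{N}$-isomorphism $R \cong Q$ over $\mathbf{T}A \times B$; this is a valid and somewhat more careful argument than the one printed, at the cost of a few extra lines. Everything else you do matches the paper's proof in substance.
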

\begin{proof}
(\ref{strengthen mono}) ($\Leftarrow$) follows immediately from that $\mathbf{N}$ is a subcategory of $\mathbf{M}$. ($\Rightarrow$) follows from that $\mathbf{N}$ is a Heyting subcategory of $\mathbf{M}$, and that Heyting functors preserve pullbacks, because in general, a morphism $m : A \rightarrow B$ is monic iff $\id_A : A \rightarrow A$ and $\id_A : A \rightarrow A$ form a pullback of $m$ and $m$ (as is well known and easy to check).

(\ref{strengthen reflect limits}) Let $L$, along with some morphisms in $\mathbf{N}$, be a cone in $\mathbf{N}$ of a finite diagram $\mathbf{D} : \mathbf{I} \rightarrow \mathbf{N}$, such that this cone is a limit of $\mathbf{D} : \mathbf{I} \rightarrow \mathbf{N}$ in $\mathbf{M}$. Let $K$ be a limit in $\mathbf{N}$ of $\mathbf{D} : \mathbf{I} \rightarrow \mathbf{N}$. Since $\mathbf{N}$ is a Heyting subcategory of $\mathbf{M}$, $K$ is also such a limit in $\mathbf{M}$. Let $f : L \rightarrow K$ be the universal morphism in $\mathbf{N}$ obtained from the limit property of $K$ in $\mathbf{N}$. By the limit properties of $K$ and $L$ in $\mathbf{M}$, $f$ is an isomorphism in $\mathbf{M}$. Since $\mathbf{N}$ is a conservative subcategory of $\mathbf{M}$, $f$ is also an isomorphism in $\mathbf{N}$, whence $L$ is a limit in $\mathbf{N}$ of $\mathbf{D} : \mathbf{I} \rightarrow \mathbf{N}$, as desired.

(\ref{strengthen power}) Since $\mathbf{N}$ is a Heyting subcategory, any pullback in $\mathbf{N}$ is also a pullback in $\mathbf{M}$. Together with (\ref{strengthen mono}), this easily yields that every power object in $\mathbf{N}$ is a power object in $\mathbf{M}$ with respect to $\mathbf{N}$.

(\ref{strengthen T heyting}) Since $\mathbf{T} : \mathbf{M} \rightarrow \mathbf{M}$ is naturally isomorphic to the identity functor, which is trivially a Heyting (Boolean) functor, $\mathbf{T}$ is also a Heyting (Boolean) endofunctor of $\mathbf{M}$.

(\ref{strengthen T limits}) Let $L$ be a limit in $\mathbf{N}$ of a finite diagram $\mathbf{D} : \mathbf{I} \rightarrow \mathbf{N}$. By (\ref{strengthen T heyting}), $\mathbf{T} : \mathbf{M} \rightarrow \mathbf{M}$ preserves limits, so $\mathbf{T}L$ is a limit in $\mathbf{M}$ of $\mathbf{T} \circ \mathbf{D} : \mathbf{I} \rightarrow \mathbf{N}$. By (\ref{strengthen reflect limits}), $\mathbf{T}L$ is also a limit in $\mathbf{N}$ of $\mathbf{T} \circ \mathbf{D} : \mathbf{I} \rightarrow \mathbf{N}$.
\end{proof}

We now proceed to show $\mathrm{Con}(\mathrm{(I)NF(U)}_\mathsf{Class}) \Rightarrow \mathrm{Con}(\mathrm{(I)ML(U)}_\mathsf{Cat})$. This is the easy and perhaps less interesting part of the equiconsistency proof, but it has the beneficial spin-off of showing how the axioms of $\mathrm{(I)ML(U)}_\mathsf{Cat}$ correspond to set theoretic intuitions. Given Corollary \ref{ConSetConClass}, it suffices to find an interpretation of $\mathrm{(I)ML(U)}_\mathsf{Cat}$ in $\mathrm{(I)ML(U)}_\mathsf{Class}$, as is done in the proof below. This proof actually shows that $\mathrm{(I)ML(U)}_\mathsf{Cat}$ can be interpreted in {\em predicative} $\mathrm{(I)ML(U)}_\mathsf{Class}$; the formulae used in the class-abstracts of the proof only need quantifiers bounded to the extension of $\mathrm{Setom}$.

\begin{thm}\label{ConClassConCat}
$\mathrm{(I)ML(U)}_\mathsf{Cat}$ is interpretable in $\mathrm{(I)ML(U)}_\mathsf{Class}$.
\end{thm}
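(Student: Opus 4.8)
The plan is to define, inside $\mathrm{(I)ML(U)}_\mathsf{Class}$, a pair of categories $(\mathbf{M}, \mathbf{N})$ together with the data $U, \mathbf{T}, \iota, \mathbf{P}, \mu$ and to check that they satisfy Definition \ref{IMLU_CAT}. Following the heuristics stated after that definition, I would let $\mathbf{M}$ be the category whose objects are the classes (the $x$ with $C(x)$) and whose morphisms $A \to B$ are the class functions, i.e.\ the total functional classes of ordered pairs $\langle a, b\rangle$ with $a \in A$, $b \in B$, with the usual composition and identities. I would let $\mathbf{N}$ be the subcategory whose objects are the sets (the $x$ with $S(x)$) and whose morphisms are the set functions, i.e.\ those class functions whose graph is itself a set. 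Since the pairing is type-level, the graph of a set function relates elements of equal type, so these are precisely the type-level morphisms in the sense of stratification, whereas a morphism such as the singleton map $x \mapsto \{x\}$ is not a set and lives only in $\mathbf{M}$. I would then put $U = V$ (the set of all setoms, which exists by $\mathrm{SC}_S$), $\mathbf{T}A = \{\{x\} \mid x \in A\}$ with $\iota_A : A \to \mathbf{T}A$ the map $x \mapsto \{x\}$, $\mathbf{P}A = \pow A$, and $\subseteq^{\mathbf{T}}_A = \{\langle \{x\}, X\rangle \mid x \in A,\ X \subseteq A,\ x \in X\} \rightarrowtail \mathbf{T}A \times \mathbf{P}A$; on morphisms $\mathbf{T}f$ sends $\{x\} \mapsto \{f(x)\}$ and $\mathbf{P}f$ sends $X \mapsto f[X]$. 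Finally $\mu_A : \pow(\mathbf{T}A) \to \mathbf{T}(\pow A)$ is the type-level bijection $Y \mapsto \{\{x \in A \mid \{x\} \in Y\}\}$.

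A point that must be handled before anything else is that $\mathcal{L}_\mathsf{Cat}$ is first-order, so each morphism of $\mathbf{M}$ must be coded as a single object of the $\mathrm{ML}$-universe; but a morphism carries a domain, a codomain and a graph, any of which may be a proper class, and proper classes cannot be members of anything. The resolution is that, although a proper class cannot be an element, its elements are all setoms (by Setomhood), so any finite family of classes can be bundled into a single class by a tagged disjoint union: for classes $A, B$ and a graph $g$, the class $(\{0\}\times A)\cup(\{1\}\times B)\cup(\{2\}\times g)$ is formed by $\mathrm{CC}_C$ and recovers each component. This gives a definable coding of $\mathbf{M}$-morphisms, with $\mathbf{N}$-morphisms, objects, source, target, composition, $U$, $\mathbf{T}, \mathbf{P}, \iota, \mu$ all definable; and since every class used is cut out by a formula with quantifiers bounded to $\mathrm{Setom}$, the interpretation in fact lands in predicative $\mathrm{(I)ML(U)}_\mathsf{Class}$.

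The bulk of the verification is then to check the clauses of Definition \ref{IMLU_CAT}. That $\mathbf{M}$ is a Heyting (resp.\ Boolean) category is the standard fact that the classes of an $\mathrm{(I)ML(U)}$-universe form such a category, with $\mathbf{1}$ a singleton, $A\times B$ the class of pairs, equalizers and images cut out by class comprehension, and $\mathrm{Sub}_\mathbf{M}(A)$ the Heyting (Boolean) algebra of subclasses of $A$. That $\mathbf{N}$ is likewise Heyting and that the inclusion is a conservative Heyting functor is where care is needed: products, equalizers, images and the operations $\wedge, \vee, \to, \exists_f, \forall_f$ applied to sets and type-level maps are all given by \emph{stratified} formulas, hence stay within sets, so $\mathrm{Sub}_\mathbf{N}(A)$ sits inside $\mathrm{Sub}_\mathbf{M}(A)$ as a sub-Heyting-algebra on which the inherited operations agree; conservativity holds because the inverse of a bijective set function is again a set (reverse the pairs). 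The universal object axiom is immediate, since every set is a subset of $V$ via a type-level inclusion, and $\iota$ is a natural isomorphism in $\mathbf{M}$ because $x \mapsto \{x\}$ is a class bijection. For the atom-free variants $\mathrm{(I)ML}$ one adds $U \cong \mathbf{P}U$, which holds because $\forall x \in \mathrm{Setom}.\,S(x)$ forces $\pow V = V$ by $\mathrm{Ext}_C$; for the $\mathrm{U}$-variants this isomorphism is (correctly) unavailable.

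The conceptual heart, and the step I expect to require the most care, is the power-object axiom (\ref{PT}). Its whole point is that the naive membership relation $\{\langle x, X\rangle \mid x \in X\} \subseteq A \times \pow A$ is \emph{unstratified} (its two coordinates sit one type apart), so it is not type-level; replacing $A$ by $\mathbf{T}A$ repairs this, since $\{\langle \{x\}, X\rangle \mid x \in X\}$ relates objects of equal type and hence $\subseteq^{\mathbf{T}}_A$ is a genuine $\mathbf{N}$-subobject of $\mathbf{T}A \times \mathbf{P}A$. Given a type-level mono $r : R \rightarrowtail \mathbf{T}A \times B$ in $\mathbf{N}$, I would define $\chi : B \to \pow A$ by $\chi(b) = \{x \in A \mid \langle \{x\}, b\rangle \in R\}$, verify by a stratification count that $\chi$ is a type-level set function, and check that $R$ is exactly the pullback of $\subseteq^{\mathbf{T}}_A$ along $\id \times \chi$ and that $\chi$ is the unique such morphism (uniqueness being forced, since for any working $\chi'$ the pullback gives $\langle \{x\}, b\rangle \in R \Leftrightarrow x \in \chi'(b)$, whence $\chi'(b) = \chi(b)$). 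A similar but lighter stratification check shows $\mu$ is a well-defined natural isomorphism $\mathbf{P}\mathbf{T} \cong \mathbf{T}\mathbf{P}$ on $\mathbf{N}$. The remaining obligations — naturality of $\iota$ and $\mu$, functoriality of $\mathbf{T}$ and $\mathbf{P}$, and that $\mathbf{T}$ restricts to $\mathbf{N}$ — are routine once the stratification bookkeeping above is in place.
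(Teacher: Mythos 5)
Your proposal is correct and follows essentially the same route as the paper's proof: the same interpretation of $\mathbf{M}$ and $\mathbf{N}$ as classes/class-functions and sets/set-functions, the same tagged-disjoint-union coding of morphisms, and the same choices of $U = V$, $\mathbf{T}A = \{\{x\} \mid x \in A\}$, $\mathbf{P}A = \pow A$, $\subseteq^{\mathbf{T}}_A$, $\mu$ (your $Y \mapsto \{\{x \mid \{x\} \in Y\}\}$ is the paper's $u \mapsto \{\cup u\}$), and the same $\chi(b) = \{x \in A \mid \langle\{x\}, b\rangle \in R\}$ for the power-object axiom, with the same stratification bookkeeping and the same treatment of the Boolean and atom-free variants.
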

\begin{proof}
We go through the case of $\mathrm{IMLU}$ in detail, and then explain the modifications required for the other cases. Throughout the interpretation, we work in $\mathrm{IMLU}_\mathsf{Class}$, introducing class and set abstracts $\{x \mid \phi(x, p)\}$, whose existence are justified by the axioms $\mathrm{CC}_C$ and $\mathrm{SC}_S$, respectively. Such class and set abstracts satisfy $\forall x . ( x \in \{x\hspace{1pt}' \mid \phi(x\hspace{1pt}', p)\} \leftrightarrow (\phi(x, p) \wedge x  \in \mathrm{Setom}) )$. Whenever $\phi(x, p)$ is stratified and we have $\mathrm{Setom}(p)$, then the corresponding set abstract exists (and is also a class). Because of the stratification constraint on ordered pairs, when showing that a function $(x \mapsto y)$ defined by $\phi(x, y, p)$ is coded as a set, we have to verify that $\phi(x, y, p)$ can be stratified with the same type assigned to $x$ and $y$. There are no constraints on $\phi$, for a class abstract to exist. Throughout the proof, these $\phi$ are written out explicitly, but for the most part the stratification verifications are simple and left to the reader. 

The interpretation proceeds as follows.
	\begin{enumerate}
	\item Interpret $\mathbf{M}_\mathrm{Ob}(x)$ as $C(x)$, i.e. ``$x$ is a class''.
	\item Interpret $\mathbf{M}_\mathrm{Mor}(m)$ as ``$m$ is a disjoint union of three classes $A$, $B$ and $f$, such that $f$ is a set of pairs coding a function with domain $A$ and co-domain $B$''.\footnote{A disjoint union of three classes may be implemented as a class using the formula $\langle i, x \rangle \in m \leftrightarrow \big( (i = 1 \wedge x \in A) \vee (i = 2 \wedge x \in B) \vee (i = 3 \wedge x \in f\hspace{2pt})\big)$. In order to be able to interpret the domain and co-domain function symbols, we need to include information about the domain class and co-domain class in the interpretation of the morphisms. Otherwise, the same functional class will often interpret many morphisms with different co-domains.} For convenience, we extend the functional notation to $m$ in this setting, i.e. $m(x) =_\mathrm{df} f\hspace{2pt}(x)$, for all $x \in A$, and we also say that $m$ codes this function/morphism from $A$ to $B$.
	\item Interpret the remaining symbols of the language of category theory in the obvious way. Most importantly, composition of morphisms is interpreted by composition of functions. The resulting interpretations of the axioms of category theory are now easily verified for $\mathbf{M}$.
	\item Interpret $\mathbf{N}_\mathrm{Ob}(x)$ as ``$x$ is a set''; and interpret $\mathbf{N}_\mathrm{Mor}(m)$ as ``[insert the interpretation of $\mathbf{M}_\mathrm{Mor}(m)$] and $m$ is a set''. The axioms of category theory are now easily verified for $\mathbf{N}$.
	\item We need to show that the interpretation of the axioms of Heyting categories hold for $\mathbf{M}$ and $\mathbf{N}$. It is well-known that these axioms hold for the categories of classes and sets in conventional class and set theory, see for example \cite{Gol06}. Here we use the same class and set constructions, we just need to check that the axioms $\mathrm{CC}_C$ and $\mathrm{SC}_S$ of $\mathrm{IMLU}_\mathsf{Cat}$ are strong enough to yield the needed sets. $\mathrm{Ext}_C$ ensures the uniqueness conditions in the axioms. 
	
	Existence conditions are supported by class/set abstracts $\{x \mid \phi(x)\}$, where the formula $\phi$ is stratified. We write out each such $\phi$ explicitly and let the reader do the simple verification that $\phi$ is stratified. Thus, in the case of $\mathbf{N}$ we can rely on $\mathrm{SC}_S$, and in the case of $\mathbf{M}$ we can rely on CC$_C$. The only difference is that in the latter case the formula $\phi$ in the class abstract may have parameters which are proper classes. So we can do the verifications for $\mathbf{M}$ and $\mathbf{N}$ simultaneously. 
	
	Let $m : A \rightarrow B$ and $n : C \rightarrow B$ be morphisms in $\mathbf{M}$ or $\mathbf{N}$. Note that for $\mathbf{M}$ and $\mathbf{N}$, subobjects are represented by subclasses and subsets, respectively. Moreover, in both $\mathbf{M}$ and $\mathbf{N}$, any morphism is monic iff injective, and is a cover iff surjective. 
		\begin{enumerate}
		\item[(F1)] Finite limits: It is well-known that the existence of all finite limits follows from the existence of a terminal object and the existence of all pullbacks. $\{\varnothing\}$ is a terminal object. $D =_\mathrm{df} \{ \langle x, z  \rangle \in A \times C \mid m(x) = n(z)\}$, along with the restricted projection morphisms $\pi^1\restriction_D : D \rightarrow A$ and $\pi^2\restriction_D : D \rightarrow C$, is a pullback of the morphisms $m : A \rightarrow B$ and $n : C \rightarrow B$.
		\item[(F2)] Images: The class or set $\{m(x) \mid x \in A\} \subseteq B$, along with its inclusion function into $B$ is the image of $m : A \rightarrow B$.
		\item[(F3)] The pullback of any cover is a cover: Consider the pullback of $m$ and $n$ considered above, and suppose that $m$ is surjective. Then, for any $c \in C$, there is $a \in A$ such that $m(a) = n(c)$, whence $\langle a, c \rangle \in D$. So the projection $D \rightarrow C$ is surjective, as required.
		\item[(F4)] Each $\mathrm{Sub}_X$ is a sup-semilattice under $\subseteq$: Since subobjects are represented by subclasses/subsets, each $\mathrm{Sub}_X$ is the partial order of subclasses/subsets of $X$. Binary union, given by the set abstract $\{z \mid z \in A \vee z \in B\}$, yields the binary suprema required for $\mathrm{Sub}_X$ to be a sup-semilattice. (Note that $\mathrm{Sub}_X$ does not need to be implemented as a set or a class.)
		\item[(F5)] For each morphism $f : X \rightarrow Y$, the functor $f\hspace{2pt}^* : \mathrm{Sub}_Y \rightarrow \mathrm{Sub}_X$ preserves finite suprema and has left and right adjoints, $\exists_f \dashv f\hspace{2pt}^* \dashv \forall_f\hspace{2pt} $: 
		
		$f\hspace{2pt}^*$ is the inverse image functor, mapping any subset $Y\hspace{1pt}' \subseteq Y$ to $\{x \in X \mid f\hspace{2pt}(x) \in Y\hspace{1pt}'\} \subseteq Y$, which clearly preserves finite suprema (unions). 
		
		$\exists_f$ is the image functor, which maps any subset $X\hspace{1pt}' \subseteq X$ to $\{f\hspace{2pt}(x) \mid x \in X\hspace{1pt}'\} \subseteq Y $. 
		
		$\forall_f$ is the functor mapping any subset $X\hspace{1pt}' \subseteq X$ to the set abstract $\{y \in Y \mid \forall x \in X .(f\hspace{2pt}(x) = y \rightarrow x \in X\hspace{1pt}')\} \subseteq Y$. 
		
		Let $X\hspace{1pt}' \subseteq X$ and $Y\hspace{1pt}' \subseteq Y$. It is easily seen that $\exists_f\hspace{2pt}(X\hspace{1pt}') \subseteq Y\hspace{1pt}' \iff X\hspace{1pt}' \subseteq f\hspace{2pt}^*(Y\hspace{1pt}')$, i.e. $\exists_f \dashv f\hspace{2pt}^*$. It is also easily seen that $f\hspace{2pt}^*(Y\hspace{1pt}') \subseteq X\hspace{1pt}' \iff Y\hspace{1pt}' \subseteq \forall_f\hspace{2pt}(X\hspace{1pt}')$, i.e. $f\hspace{2pt}^* \dashv \forall_f $.
		\end{enumerate}
		
	\item In the verification of the HC axioms above, when the objects and morphisms are in $\mathbf{N}$, the same sets are constructed regardless if the HC axioms are verified for $\mathbf{M}$ or $\mathbf{N}$. It follows that $\mathbf{N}$ is a Heyting subcategory of $\mathbf{M}$.
	\item In both $\mathbf{M}$ and $\mathbf{N}$, a morphism is an isomorphism iff it is bijective. Hence, $\mathbf{N}$ is a conservative subcategory of $\mathbf{M}$.
	\item Interpret $U$ as $V$, the set $\{x \mid x = x\}$, which is a superset of every set, and hence a universal object in $\mathbf{N}$.
	\item For any object $x$ and morphism $m : A \rightarrow B$ of $\mathbf{M}$, interpret $\mathbf{T}_\mathrm{Ob}(x)$ as $\{\{u\} \mid u \in x\}$; and interpret $\mathbf{T}_\mathrm{Mor}(m)$ as ``the class coding the morphism $(\{x\} \mapsto \{m(x)\}) : \mathbf{T}A \rightarrow \mathbf{T}B$''. Since these formulae stratified, $\mathbf{T}$ restricts appropriately to $\mathbf{N}$. It is easily verified that the interpreted axioms of a functor hold.
	\item For each object $x$ in $\mathbf{M}$, interpret $\iota_x$ as the code of the morphism $(z \mapsto \{z\}) : x \rightarrow \mathbf{T}(x)$, which is a class. Since the inverse of $\iota_x$ is similarly interpretable, we obtain that the interpretation of $\iota_x$ is an isomorphism in the category theoretic sense. That $\iota$ is a natural isomorphism on $\mathbf{M}$ is clear from its definition and the definition of $\mathbf{T}$. (A word of caution: $\iota_x$ is not generally a set even if $x$ is, in fact $\iota_V$ is a proper class.)
	\item For each object $x$ in $\mathbf{N}$, interpret $\mathbf{P}_\mathrm{Ob}(x)$ as $\pow x$. For each morphism $m : A \rightarrow B$ in $\mathbf{N}$, interpret $\mathbf{P}_\mathrm{Mor}(m)$ as ``the set coding the morphism $(x \mapsto \{m(z) \mid z \in x\}) : \pow A \rightarrow \pow B$''. It is easily seen that this makes $\mathbf{P}$ an endofunctor on $\mathbf{N}$.
	\item Let $x$ be an object in $\mathbf{N}$. Note that $\mathbf{P}\mathbf{T}x = \pow \{\{z\} \mid z \in x\}$. Interpret $\mu_x : \mathbf{P}\mathbf{T}x \rightarrow \mathbf{T}\mathbf{P}x$ by the set coding the morphism $(u \mapsto \{\cup u\}) : \pow \{\{z\} \mid z \in y\} \rightarrow \{\{v\} \mid v \in \pow y\}$. Union and singleton are defined by stratified formulae. Because the union operation lowers type by one and the singleton operation raises type by one, argument and value are type-level in the formula defining $\mu_x$, so $\mu_x$ is coded by a set and is therefore a morphism in $\mathbf{N}$. It is easily seen from the constructions of $\mathbf{T}$, $\mathbf{P}$ and $\mu$, that $\mu$ is a natural isomorphism.
	\item Define $x \subseteq^{\mathbf{T}} y$ set theoretically by $\exists u . (x = \{u\} \wedge u \in y)$. For each object $A$ of $\mathbf{N}$, interpret $\subseteq^{\mathbf{T}}_A \hookrightarrow \mathbf{T} A \times \mathbf{P} A$ as the set coding the inclusion function of $\{\langle x, y \rangle \in \mathbf{T}A \times \mathbf{P}A \mid x \subseteq^{\mathbf{T}} y\} \subseteq \mathbf{T}A \times \mathbf{P}A$.
	\item We proceed to verify that $\mathbf{T}$, $\mathbf{P}$ and $\subseteq^\mathbf{T}$ satisfy the property (\ref{PT}). Suppose that $r : R \rightarrowtail\mathbf{T} A \times B$ is a mono in $\mathbf N$. Let $\chi : B \rightarrow \pow A$ code the function $(y \mapsto \big\{u \mid \exists c \in R . r(c) = \langle \{u\}, y \rangle \big\})$. Since this is a stratified definition, where argument and value have equal type, $\chi$ is a morphism in $\mathbf{N}$. The proof that $\chi$ is the unique morphism making (\ref{PT}) a pullback in $\mathbf{N}$ is just like the standard proof in conventional set theory; it proceeds as follows. We may assume that $R \subseteq A \times B$ and $r$ is the inclusion function. Then $\chi$ is $(y \mapsto \{u \mid \{u\} R y \})$. For the top arrow in (\ref{PT}) we choose $(\id \times \chi)\restriction_R$. Since $\forall \langle \{u\}, y\rangle \in R . \{u\} \subseteq^{\mathbf{T}} \chi(y)$, (\ref{PT}) commutes. 
	
	For the universal pullback property: Suppose that $\langle f, g\rangle : Q \rightarrow \mathbf{T}A \times B$ and $\langle d, e\rangle : Q \rightarrow \subseteq^{\mathbf{T}}_A$ are morphisms making the diagram commute in $\mathbf{N}$. Let $q \in Q$ be arbitrary. Then $f\hspace{2pt}(q) = d(q)$, $\chi(g(q)) = e(q)$ and $d(q) \subseteq^\mathbf{T} e(q)$, so $f\hspace{2pt}(q) \subseteq^{\mathbf{T}} \chi(g(q))$, whence by definition of $\chi$ we have $f\hspace{2pt}(q) R g(q)$. Thus, $(q \mapsto \langle f\hspace{2pt}(q), g(q)\rangle)$ defines the unique morphism from $Q$ to $R$ in $\mathbf{N}$, witnessing the universal pullback property. 
	
	It remains to show that if $\chi\hspace{1pt}'$ is a morphism in $\mathbf{N}$ that (in place of $\chi$) makes (\ref{PT}) a pullback in $\mathbf{N}$, then $\chi\hspace{1pt}' = \chi$. Let $\chi\hspace{1pt}'$ be such a morphism, and let $u \in A$ and $y \in B$. Since $\{u\} R y \Leftrightarrow \{u\} \subseteq^{\mathbf{T}} \chi(y)$, it suffices to show that $\{u\} R y \Leftrightarrow \{u\} \subseteq^{\mathbf{T}} \chi\hspace{1pt}'(y)$. By commutativity $\{u\} R y \Rightarrow \{u\} \subseteq^{\mathbf{T}} \chi\hspace{1pt}'(y)$. Conversely, applying the universal pullback property to the inclusion function $\{\langle \{u\}, y\rangle\} \hookrightarrow \mathbf{T}A \times B$, we find that $\{u\} \subseteq^{\mathbf{T}} \chi\hspace{1pt}'(y) \Rightarrow \{u\} R y$.
	\end{enumerate}
This completes the interpretation of $\mathrm{IMLU}_\mathsf{Cat}$ in $\mathrm{IMLU}_\mathsf{Class}$. For $\mathrm{MLU}$, simply observe that $\mathrm{MLU}_\mathsf{Class} \vdash \forall X . \forall X\hspace{1pt}' \subseteq X . X\hspace{1pt}' \cup (X - X\hspace{1pt}') = X$, so each $\mathrm{Sub}_X$ is Boolean. For $\mathrm{(I)ML}$,  the fact that $V = \pow V$ ensures that the interpretation of $U \cong \mathbf{P}U$ holds.
\end{proof}

\section{Interpretation of the $\mathsf{Set}$-theories in the $\mathsf{Cat}$-theories} \label{interpret set in cat}

For the rest of the paper, fix an $\mathrm{IMLU}$-category $( \mathbf{M}, \mathbf{N})$ -- along with $U$ in $\mathbf{N}$, $\mathbf{T}: \mathbf{M} \rightarrow \mathbf{M}$ (restricting to an endofunctor of $\mathbf{N}$), $\iota : \mathbf{id} \xrightarrow{\sim} \mathbf{T}$ on $\mathbf{M}$, $\mathbf{P}: \mathbf{N} \rightarrow \mathbf{N}$, $\mu : \mathbf{P}\circ\mathbf{T} \xrightarrow{\sim} \mathbf{T}\circ\mathbf{P}$, and $\subseteq^{\mathbf{T}}_X \rightarrowtail \mathbf{T} X \times \mathbf{P}X$ (for each object $X$ in $\mathbf{N}$) -- all satisfying the conditions in Definition \ref{IMLU_CAT}. Moreover, fix an object $\mathbf{1}$ which is terminal in both $\mathbf{M}$ and $\mathbf{N}$ and fix a product functor $\times$ on $\mathbf{M}$ which restricts to a product functor on $\mathbf{N}$. This can be done since $\mathbf{N}$ is a Heyting subcategory of $\mathbf{M}$. Given an $n \in \mathbb{N}$ and a product $P$ of $n$ objects, the $i$-th projection morphism, for $i = 1, \dots , n$, is denoted $\pi_P^i$.

In this section, we shall establish that 
$$\mathrm{Con}(\mathrm{(I)ML(U)}_\mathsf{Cat}) \Rightarrow \mathrm{Con}(\mathrm{(I)NF(U)}_\mathsf{Set}).$$ 
We do so by proving that the axioms of $\mathrm{(I)NF(U)}_\mathsf{Set}$ can be interpreted in the internal language of $\mathrm{(I)ML(U)}_\mathsf{Cat}$. In particular, we construct a structure in the categorical semantics of $\mathbf{M}$ which satisfies the axioms of $\mathrm{(I)NF(U)}_\mathsf{Set}$. The variation between the intuitionistic and the classical case is handled by Theorem \ref{Completeness}, so we will concentrate on proving 
$$\mathrm{Con}(\mathrm{IMLU}_\mathsf{Cat}) \Rightarrow \mathrm{Con}(\mathrm{INFU}_\mathsf{Set}),$$ 
and $\mathrm{Con}(\mathrm{MLU}_\mathsf{Cat}) \Rightarrow \mathrm{Con}(\mathrm{NFU}_\mathsf{Set})$ is thereby obtained as well, simply by assuming that $( \mathbf{M}, \mathbf{N})$ is an $\mathrm{MLU}$-category. By Lemma \ref{SforNF} below this also establishes $\mathrm{Con}(\mathrm{(I)ML}_\mathsf{Cat}) \Rightarrow \mathrm{Con}(\mathrm{(I)NF}_\mathsf{Set})$.

\begin{constr} \label{InConstruction}
For each object $A$ of $\mathbf{N}$ let $\in_A$, along with $m_{\in_A}$, be this pullback in $\mathbf{M}$:
$$
\begin{tikzcd}[ampersand replacement=\&, column sep=small]
{\in_A} \ar[rr, "{\sim}"] \ar[d, rightarrowtail, "{m_{\in_A}}"] \&\& {\subseteq^{\mathbf{T}}_A} \ar[d, rightarrowtail] \\
{A \times \mathbf{P} A} \ar[rr, "{\sim}", "{\iota \times \id}"'] \&\& {\mathbf{T} A \times \mathbf{P} A}
\end{tikzcd}
$$
\end{constr}

In order to avoid confusing the $\in_A$ defined above with the membership symbol of $\mathcal{L}_\mathsf{Set}$, the latter is replaced by the symbol $\epsin$.

\begin{constr} \label{StructureConstruction}
This $\mathcal{L}_\mathsf{Set}$-structure, in the categorical semantics of $\mathbf{M}$, is denoted $\mathcal{U}$:
	\begin{enumerate}
	\item The single sort of $\mathcal{L}_\mathsf{Set}$ is assigned to the universal object $U$ of $\mathbf{N}$.
	\item Fix a mono $m_S : \mathbf{P}U \rightarrowtail U$ in $\mathbf{N}$. The sethood predicate symbol $S$ is identified with the predicate symbol $m_S$ in $\mathcal{S}^\mathbf{M}_\mathbf{N}$, and is assigned to the subobject of $U$ determined by $m_S$.
	\item Fix the mono $m_{\epsin} =_{\mathrm{df}} (\id_U \times m_S) \circ m_{\in_U} : \in_U \rightarrowtail U \times \mathbf{P} U \rightarrowtail U \times U.$ The membership symbol $\epsin$ is identified with the symbol $m_{\epsin}$ in $\mathcal{S}^\mathbf{M}_\mathbf{M}$, and is assigned to the subobject of $U \times U$ determined by $m_{\epsin}$.
	\item Fix a mono $m_P : U \times U \rightarrowtail U$ in $\mathbf{N}$. The function symbol $\langle -, -\rangle$ is identified with the symbol $m_P$ in $\mathcal{S}^\mathbf{M}_\mathbf{N}$ and is assigned to the subobject of $U$ determined by $m_P$.
	\end{enumerate}

By the identifications of symbols, the signature of $\mathcal{L}_\mathsf{Set}$ is a subsignature of $\mathcal{S}^\mathbf{M}_\mathbf{M}$.
\end{constr}

We will usually omit subscripts such as in $\in_A$ and $\subseteq^{\mathbf{T}}_A$, as they tend to be obvious. Similarly, sort declarations are sometimes omitted when considering formulae of the internal language. Note that the symbol $\epsin$ in $\mathcal{L}_\mathsf{Set}$ is interpreted by the subobject of $U \times U$ determined by $m_{\epsin}$, not by the subobject of $U \times \mathbf{P}U$ determined by $m_{\in_U}$. In the categorical setting it tends to be more natural to have a membership relation of sort $A \times \mathbf{P}A$ for each object $A$, while in the set-theoretical setting it tends to be more natural to have just one sort, say Universe, and just one membership relation of sort Universe $\times$ Universe.

To prove $\mathrm{Con}(\mathrm{(I)ML(U)}_\mathsf{Cat}) \Rightarrow \mathrm{Con}(\mathrm{(I)NF(U)}_\mathsf{Set})$, we need to establish that $\mathcal{U}$ satisfies Axioms \ref{SCAx}. $\mathcal{U} \models \phi$ is the statement that the $\mathcal{L}_\mathsf{Set}$-structure $\mathcal{U}$ satisfies $\phi \in \mathcal{L}_\mathsf{Set}$, in the categorical semantics of $\mathbf{M}$. For the major axioms, Extensionality and Stratified Comprehension, we will first prove the more general (and more naturally categorical) statements in terms of the $\in_A$, and second obtain the required statements about $\epsin$ as corollaries. The general results will be stated in the form $\mathbf{M} \models \phi$, where $\phi$ is a formula in the language of $\mathcal{S}^M_M$ or some subsignature of it. In particular, the subsignature $\mathcal{S}^\mathbf{M}_\mathbf{N}$ is of interest. Since $\mathbf{N}$ is a Heyting subcategory of $\mathbf{M}$, if $\phi(\vec{x})$ is an $\mathcal{S}^\mathbf{M}_\mathbf{N}$-formula (with $\vec{x} : X^n$, for some $X$ in $\mathbf{N}$ and $n \in \mathbb{N}$), then $\llbracket \vec{x} : X^n \mid \phi(\vec{x}) \rrbracket$ is assigned to the same subobject of $X^n$ by the natural $\mathcal{S}^\mathbf{M}_\mathbf{M}$-structure as by the natural $\mathcal{S}^\mathbf{M}_\mathbf{N}$-structure. Therefore, we do not need to specify which of these structures is used when referring to a subobject by such an expression.

The following proposition is the expression of Construction \ref{InConstruction} in the categorical semantics.

\begin{prop}\label{membership as iota subset}
Let $X$ be an object of $\mathbf{N}$. 
$$\mathbf{M} \models \forall x : X . \forall y : \mathbf{P} X . ( x \in y \leftrightarrow \iota x \subseteq^{\mathbf{T}} y).$$
\end{prop}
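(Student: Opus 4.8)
The plan is to unwind the definition of the categorical semantics and reduce the claimed validity to the defining pullback of Construction \ref{InConstruction}. First I would observe that, by the standard rules for the categorical semantics of $\rightarrow$ and $\forall$ recalled in Section \ref{CatAxioms} (namely $\llbracket \vec{x} \mid \chi \rightarrow \psi \rrbracket = \mathbf{1}$ iff $\llbracket \vec{x} \mid \chi \rrbracket \leq \llbracket \vec{x} \mid \psi \rrbracket$, together with the clause for the universal quantifier), the statement $\mathbf{M} \models \forall x : X . \forall y : \mathbf{P}X . (x \in y \leftrightarrow \iota x \subseteq^{\mathbf{T}} y)$ is equivalent to the equality of two subobjects of $X \times \mathbf{P}X$, namely
\[
\llbracket x : X, y : \mathbf{P}X \mid x \in y \rrbracket = \llbracket x : X, y : \mathbf{P}X \mid \iota x \subseteq^{\mathbf{T}} y \rrbracket.
\]
Thus the whole proposition becomes a computation of these two interpretations.

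Second, I would compute each side using the atomic clause $\llbracket \vec{x} \mid R(\vec{t}) \rrbracket = \llbracket \vec{x} \mid \vec{t} \rrbracket^*(R^\mathcal{M})$. For the left-hand side, $\in$ at the sort $X \times \mathbf{P}X$ is the relation symbol $m_{\in_X}$ and the argument terms are the variables $x, y$, whose tuple interprets as $\id_{X \times \mathbf{P}X}$; hence $\llbracket x, y \mid x \in y \rrbracket$ is exactly the subobject of $X \times \mathbf{P}X$ represented by $m_{\in_X}$. For the right-hand side, $\subseteq^{\mathbf{T}}$ at the sort $\mathbf{T}X \times \mathbf{P}X$ is the relation symbol $m_{\subseteq^{\mathbf{T}}_X}$; the term $\iota x$ interprets as $\iota_X \circ \pi^1 : X \times \mathbf{P}X \rightarrow \mathbf{T}X$ and the term $y$ as $\pi^2$, so the tuple $\langle \iota x, y \rangle$ interprets as the morphism $\iota_X \times \id_{\mathbf{P}X} : X \times \mathbf{P}X \rightarrow \mathbf{T}X \times \mathbf{P}X$. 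Consequently $\llbracket x, y \mid \iota x \subseteq^{\mathbf{T}} y \rrbracket$ is the pullback $(\iota_X \times \id)^*(\subseteq^{\mathbf{T}}_X)$.

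Third, I would invoke Construction \ref{InConstruction} directly: there $\in_X$ together with $m_{\in_X}$ is defined to be precisely the pullback of $\subseteq^{\mathbf{T}}_X$ along the bottom isomorphism $\iota_X \times \id$. This is literally the morphism arising in the previous step, so the two subobjects of $X \times \mathbf{P}X$ coincide, which gives the desired equality of interpretations and hence the validity of the biconditional.

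The content here lies entirely in matching definitions, so there is no genuine obstacle; the only point requiring care is the bookkeeping of the term interpretations, i.e. verifying that $\langle \iota x, y \rangle$ interprets as exactly the morphism $\iota_X \times \id$ labelling the bottom arrow of Construction \ref{InConstruction} (with $\iota x$ contributing $\iota_X \circ \pi^1$ and $y$ contributing $\pi^2$), so that the pullback computed semantically agrees on the nose with the pullback used to define $\in_X$. Once this is in place the proposition is immediate; in effect it merely re-expresses the defining square of Construction \ref{InConstruction} inside the internal language.
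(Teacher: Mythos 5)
Your proposal is correct and follows exactly the paper's argument: the paper's proof is the one-line computation $\llbracket x, y \mid \iota x \subseteq^{\mathbf{T}} y \rrbracket = (\iota \times \id)^*\llbracket u, y \mid u \subseteq^{\mathbf{T}} y \rrbracket = \llbracket x, y \mid x \in y \rrbracket$, where the last equality is Construction \ref{InConstruction}. You have simply spelled out the term-interpretation bookkeeping that the paper leaves implicit.
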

\begin{proof}
$\llbracket x, y \mid \iota x \subseteq^{\mathbf{T}} y \rrbracket = (\iota \times \id)^*\llbracket u, y \mid u \subseteq^{\mathbf{T}} y \rrbracket = \llbracket x, y \mid x \in y \rrbracket$. 
\end{proof}

Let us start the proof of $\mathcal{U} \models \mathrm{INFU}_\mathsf{Set}$ with the easy axioms of Sethood and Ordered Pair.

\begin{prop}[Sethood] \label{Sethood}
$\mathcal{U} \models \forall z . \forall x . (z \epsin x \rightarrow S(x))$
\end{prop}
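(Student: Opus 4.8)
The plan is to unwind the categorical semantics of the sentence and reduce the claim to a single subobject inequality that holds essentially by the definition of $m_{\epsin}$. The statement $\forall z . \forall x . (z \epsin x \rightarrow S(x))$ is a universally quantified implication, so by the rules for validity recorded after Definition \ref{NaturalSignature} (namely that $\llbracket \forall z \forall x . \psi \rrbracket = \mathbf{1}$ iff $\llbracket z, x \mid \psi \rrbracket$ is maximal, and that $\llbracket \vec{x} \mid \chi \rightarrow \psi \rrbracket$ is maximal iff $\llbracket \vec{x} \mid \chi \rrbracket \leq \llbracket \vec{x} \mid \psi \rrbracket$), it suffices to prove the single inequality
\[
\llbracket z, x : U \mid z \epsin x \rrbracket \leq_{U \times U} \llbracket z, x : U \mid S(x) \rrbracket
\]
in $\mathrm{Sub}(U \times U)$.

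First I would compute the two interpretations from Construction \ref{StructureConstruction}. Since the term-tuple $(z,x)$ in the context $z, x : U$ is just $\id_{U \times U}$, the antecedent $\llbracket z, x \mid z \epsin x \rrbracket$ is precisely the subobject of $U \times U$ represented by $m_{\epsin} = (\id_U \times m_S) \circ m_{\in_U}$. For the consequent, $S$ is a unary predicate interpreted by $m_S : \mathbf{P}U \rightarrowtail U$, and in the context $z, x$ the term $x$ is the second projection $\pi^2 : U \times U \rightarrow U$; hence $\llbracket z, x \mid S(x) \rrbracket = (\pi^2)^*(m_S)$, the pullback of $m_S$ along $\pi^2$. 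A direct check of the pullback square shows this subobject is represented by $\id_U \times m_S : U \times \mathbf{P}U \rightarrowtail U \times U$.

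With both interpretations in hand, the inequality is immediate: the factorization $m_{\epsin} = (\id_U \times m_S) \circ m_{\in_U}$ exhibits $m_{\epsin}$ as factoring through the representative $\id_U \times m_S$ of $\llbracket z, x \mid S(x) \rrbracket$, and the connecting morphism $m_{\in_U}$ is monic (it is the mono supplied by Construction \ref{InConstruction}). By the definition of the order $\leq_{U \times U}$ on subobjects, this is exactly the witness that $m_{\epsin} \leq_{U \times U} (\id_U \times m_S)$, which is what we needed. I do not expect any real obstacle here: the axiom Sethood holds by the very way $m_{\epsin}$ was defined, namely so that membership factors through the ``sets'' $U \times \mathbf{P}U \rightarrowtail U \times U$. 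The only point demanding minor care is the bookkeeping of which projection $S(x)$ is pulled back along, and verifying that pulling $m_S$ back along $\pi^2$ indeed yields $\id_U \times m_S$.
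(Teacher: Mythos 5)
Your proof is correct and follows essentially the same route as the paper's: both reduce the claim to the subobject inequality $\llbracket z, x \mid z \epsin x \rrbracket \leq_{U \times U} \llbracket z, x \mid S(x) \rrbracket$, identify $\llbracket z, x \mid S(x) \rrbracket$ with the subobject represented by $\id_U \times m_S$, and read off the inequality from the factorization $m_{\epsin} = (\id_U \times m_S) \circ m_{\in_U}$. You merely spell out the pullback computation $(\pi^2)^*(m_S) = \id_U \times m_S$ that the paper leaves implicit.
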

\begin{proof}
By construction of $m_{\epsin}$, $\llbracket z, x : U \mid z \epsin x \rrbracket \leq_{U \times U} U \times \mathbf{P}U$, and by construction of  $m_S$, $U \times \mathbf{P}U \cong_{U \times U} \llbracket z, x : U \mid S(x)\rrbracket$, so 
$$\llbracket z, x : U \mid z \epsin x \rrbracket \leq_{U \times U} \llbracket z, x : U \mid S(x)\rrbracket,$$ 
as desired.
\end{proof}

\begin{prop}[Ordered Pair] \label{OrderedPair}
$\mathcal{U} \models \forall x, x\hspace{1pt}', y, y\hspace{1pt}' . \big( \langle x, y \rangle = \langle x\hspace{1pt}', y\hspace{1pt}' \rangle \rightarrow ( x = x\hspace{1pt}' \wedge y = y\hspace{1pt}' ) \big)$
\end{prop}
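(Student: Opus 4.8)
The plan is to read this axiom as the semantic shadow of the fact that the interpreting morphism $m_P : U \times U \rightarrowtail U$ is monic. Indeed, it is a standard fact of categorical logic that a morphism $f : A \rightarrow B$ is monic exactly when $\mathbf{M} \models \forall a, a' : A . (f(a) = f(a') \rightarrow a = a')$ holds in the internal language; the ordered pair axiom is precisely this injectivity statement for $f = m_P$, rephrased in terms of the components of $U \times U$.

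To make this precise I would first unwind the interpretations in the context $(x, x', y, y' : U)$, whose stage object is $U \times U \times U \times U$ with projections $\pi^1, \dots, \pi^4$ selecting $x, x', y, y'$. Since $\langle -, - \rangle$ is interpreted by $m_P$, the relevant terms become $\llbracket \langle x, y \rangle \rrbracket = m_P \circ \langle \pi^1, \pi^3 \rangle$ and $\llbracket \langle x', y' \rangle \rrbracket = m_P \circ \langle \pi^2, \pi^4 \rangle$. By the equality clause of the semantics (pullback of the diagonal $\Delta_U$), the subobject $\llbracket \langle x, y \rangle = \langle x', y' \rangle \rrbracket$ of $U^4$ is the equalizer of these two parallel morphisms into $U$.

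The key step is that, because $m_P$ is monic, it may be cancelled on the left when testing equality of morphisms, so this equalizer coincides with the equalizer of $\langle \pi^1, \pi^3 \rangle$ and $\langle \pi^2, \pi^4 \rangle$. Here I would invoke Proposition \ref{strenghten def}(\ref{strengthen mono}) to transfer the monicity of $m_P$ from $\mathbf{N}$, where it was chosen in Construction \ref{StructureConstruction}, to $\mathbf{M}$, whose semantics we are using. On generalized elements, $\langle \pi^1, \pi^3 \rangle$ and $\langle \pi^2, \pi^4 \rangle$ agree precisely where $\pi^1 = \pi^2$ and $\pi^3 = \pi^4$, i.e. on the intersection of the two equalizers representing $\llbracket x = x' \rrbracket$ and $\llbracket y = y' \rrbracket$. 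Since conjunction is interpreted as meet in the Heyting algebra $\mathrm{Sub}(U^4)$, that intersection is exactly $\llbracket x = x' \wedge y = y' \rrbracket$. Thus in fact $\llbracket \langle x, y \rangle = \langle x', y' \rangle \rrbracket = \llbracket x = x' \wedge y = y' \rrbracket$; in particular the former lies below the latter, which by the interpretation of implication as $\leq$ in $\mathrm{Sub}(U^4)$ yields the claim.

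I expect no serious obstacle here, as this is one of the easy axioms. The only points demanding a little care are the left-cancellation of $m_P$ in passing from the equalizer of $m_P \circ (-)$ to that of $(-)$, and the transfer of monicity from $\mathbf{N}$ to $\mathbf{M}$ via Proposition \ref{strenghten def}(\ref{strengthen mono}); both are routine. Note that the argument even delivers the converse implication for free, although only the stated direction is needed for Axioms \ref{SCAx}.
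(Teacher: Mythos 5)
Your proposal is correct and is essentially the paper's own argument: both reduce the axiom to left-cancellation of the monic $m_P$ interpreting $\langle -,-\rangle$ (the paper works with a representing mono $\langle a,a',b,b'\rangle$ of the antecedent subobject and derives $\langle a,b\rangle = \langle a',b'\rangle$ from $m_P\circ\langle a,b\rangle = m_P\circ\langle a',b'\rangle$, which is your equalizer computation in different notation). Your explicit appeal to Proposition \ref{strenghten def}(\ref{strengthen mono}) to move monicity from $\mathbf{N}$ to $\mathbf{M}$ is a detail the paper leaves implicit, but it is the right justification.
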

\begin{proof}
Let $\langle a, a\hspace{1pt}', b, b\hspace{1pt}' \rangle$ be a mono with co-domain $U^4$, representing 
\[
\llbracket x, x\hspace{1pt}', y, y\hspace{1pt}' : U \mid \forall x, x\hspace{1pt}', y, y\hspace{1pt}' . \big( \langle x, y \rangle = \langle x\hspace{1pt}', y\hspace{1pt}' \rangle \rightarrow ( x = x\hspace{1pt}' \wedge y = y\hspace{1pt}' ) \big) \rrbracket .
\]
We need to derive $\langle a, b \rangle = \langle a\hspace{1pt}', b\hspace{1pt}' \rangle$ from the assumption $m_P \circ \langle a, b \rangle = m_P \circ \langle a\hspace{1pt}', b\hspace{1pt}' \rangle$. But this follows immediately from that $m_P$ is monic.
\end{proof}

The following Lemma yields $\mathrm{Con}(\mathrm{(I)ML}_\mathsf{Cat}) \Rightarrow \mathrm{Con}(\mathrm{(I)NF}_\mathsf{Set})$ for free, if we successfully prove that $\mathcal{U} \models \mathrm{(I)NFU_\mathsf{Set}}$.

\begin{lemma}[$\mathrm{(I)NF}$ for free]\label{SforNF}
If $U \cong \mathbf{P}U$, then we can choose $m_S : \mathbf{P}U \rightarrowtail U$ (i.e. the interpretation of the predicate symbol $S$) to be an isomorphism. If so, then $\mathcal{U} \models \forall x . S(x)$.
\end{lemma}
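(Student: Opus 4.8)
The plan is to dispatch the two assertions of the lemma separately; both are short once the categorical semantics is unwound.

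First I would treat the claim that $m_S$ can be chosen to be an isomorphism. The hypothesis $U \cong \mathbf{P}U$ supplies an isomorphism $\mathbf{P}U \to U$ between two objects of $\mathbf{N}$; since the axiom concerns $\mathbf{N}$-objects (and, by conservativity of $\mathbf{N}$ in $\mathbf{M}$, an $\mathbf{N}$-morphism that is iso in $\mathbf{M}$ is iso in $\mathbf{N}$), this isomorphism may be taken in $\mathbf{N}$. Every isomorphism is monic, and Construction \ref{StructureConstruction} requires of $m_S$ only that it be a mono $\mathbf{P}U \rightarrowtail U$ in $\mathbf{N}$; hence this isomorphism is an admissible choice of $m_S$. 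This step is essentially bookkeeping.

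Next I would prove that, with such a choice, $\mathcal{U} \models \forall x . S(x)$. The strategy is to compute $\llbracket x : U \mid S(x) \rrbracket$ directly. By the clause for atomic formulae together with $\llbracket x : U \mid x \rrbracket = \id_U$, this subobject equals $\id_U^*(S^\mathcal{U}) = S^\mathcal{U}$, which by Construction \ref{StructureConstruction} is exactly the subobject of $U$ determined by $m_S$. By the definition of validity (and the rule for the universal quantifier recorded earlier in the excerpt), $\mathcal{U} \models \forall x . S(x)$ holds precisely when this subobject is the maximal subobject of $\mathrm{Sub}(U)$, namely the one represented by $\id_U : U \rightarrowtail U$. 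Finally, when $m_S$ is an isomorphism the factorization $\id_U \circ m_S = m_S$ exhibits $m_S$ and $\id_U$ as isomorphic monos in $\mathbf{M}/U$, so they represent the same subobject; hence $\llbracket x : U \mid S(x) \rrbracket$ is the top subobject and the sentence is valid.

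I do not anticipate a genuine obstacle: the entire content is the reduction of $\mathcal{U} \models \forall x . S(x)$ to the statement that the subobject cut out by $m_S$ is maximal, which is immediate once one observes that an iso-mono represents the top subobject. The only points demanding care are reading off from Construction \ref{StructureConstruction} that $S$ is interpreted by the subobject determined by $m_S$, and confirming that the isomorphism furnished by $U \cong \mathbf{P}U$ genuinely lives in $\mathbf{N}$ so that it is a legitimate value of $m_S$.
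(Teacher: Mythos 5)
Your proposal is correct and follows essentially the same route as the paper: the paper's entire proof is the observation that an isomorphism $m_S$ and $\id_U$ represent the same (maximal) subobject of $U$, which is exactly your final step, with the semantic unwinding you supply left implicit.
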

\begin{proof}
Since $m_S$ is an isomorphism, $m_s : \mathbf{P}U \rightarrowtail U$ and $\id : U \rightarrowtail U$ represent the same subobject of $U$.
\end{proof}

Note that we do not need $U = \mathbf{P}U$ for this result; $U \cong \mathbf{P}U$ suffices. This means that our results will actually give us that $\mathrm{(I)NF_\mathsf{Set}}$ is equiconsistent with $\mathrm{(I)NFU_\mathsf{Set}} + \big( |V| = |\mathcal P(V)| \big)$, with essentially no extra work. See Corollary \ref{ConFewAtoms} below. This result has been proved previously in \cite{Cra00} using the conventional set-theoretical semantics. In the present categorical setting, this result is transparently immediate.

\begin{prop}[Extensionality]\label{Extensionality}
Let $Z$ be an object of $\mathbf{N}$. 
$$\mathbf{M} \models \forall x : \mathbf{P} Z . \forall y : \mathbf{P} Z . \big[ \big( \forall z : Z . (z \in x \leftrightarrow z \in y) \big) \rightarrow x = y \big].$$
\end{prop}
\begin{proof}
We use the fact that $\mathbf{N}$ is a Heyting subcategory of $\mathbf{M}$. By Proposition \ref{membership as iota subset}, it suffices to establish that in $\mathbf{N}$:
\[ \llbracket x : \mathbf{P} Z, y : \mathbf{P} Z \mid \forall z : \mathbf{T}Z . (z \subseteq^{\mathbf{T}} x \leftrightarrow z \subseteq^{\mathbf{T}} y) \rrbracket \leq \llbracket x, y \mid x = y \rrbracket .\]
Let $\langle a, b \rangle : E \rightarrowtail \mathbf{P} Z \times \mathbf{P} Z$ represent $\llbracket x, y \mid \forall z . (z \subseteq^{\mathbf{T}} x \leftrightarrow z \subseteq^{\mathbf{T}} y) \rrbracket$. We need to show that $a = b$.

Consider $\llbracket w, u, v \mid w \subseteq^{\mathbf{T}} u \rrbracket$ and $\llbracket w, u, v \mid w \subseteq^{\mathbf{T}} v \rrbracket$ as subobjects of $\mathbf{T} Z \times \mathbf{P} Z \times \mathbf{P} Z$. We calculate their pullbacks along $\id \times \langle a, b \rangle$ to be equal subobjects of $\llbracket x, y \mid \forall z . (z \subseteq^{\mathbf{T}} x \leftrightarrow z \subseteq^{\mathbf{T}} y) \rrbracket$:
\[ 
\begin{split}
& (\id \times \langle a, b \rangle)^* \llbracket w, u, v \mid w \subseteq^{\mathbf{T}} u \rrbracket \\
= & \llbracket w, u, v \mid w \subseteq^{\mathbf{T}} u \wedge \forall z . (z \subseteq^{\mathbf{T}} u \leftrightarrow z \subseteq^{\mathbf{T}} v) \rrbracket \\
= & \llbracket w, u, v \mid w \subseteq^{\mathbf{T}} u \wedge w \subseteq^{\mathbf{T}} v \wedge \forall z . (z \subseteq^{\mathbf{T}} u \leftrightarrow z \subseteq^{\mathbf{T}} v) \rrbracket \\
= & \llbracket w, u, v \mid w \subseteq^{\mathbf{T}} v \wedge \forall z . (z \subseteq^{\mathbf{T}} u \leftrightarrow z \subseteq^{\mathbf{T}} v) \rrbracket \\
= & (\id \times \langle a, b \rangle)^*\llbracket w, u, v \mid w \subseteq^{\mathbf{T}} v \rrbracket 
\end{split}
\]

From inspection of the chain of pullbacks
\[
\begin{tikzcd}[ampersand replacement=\&, column sep=small]
{(\id \times \langle a, b \rangle)^* \llbracket w, u, v \mid w \subseteq^{\mathbf{T}} u \rrbracket} \ar[rr, rightarrowtail] \ar[d, rightarrowtail, "f"'] \&\&
{\llbracket w, u, v \mid w \subseteq^{\mathbf{T}} u \rrbracket} \ar[rr] \ar[d, rightarrowtail] \&\&
{\llbracket w, t \mid w \subseteq^{\mathbf{T}} t \rrbracket} \ar[d, rightarrowtail]
\\
{\llbracket z, x, y \mid \forall z . (z \subseteq^{\mathbf{T}} x \leftrightarrow z \subseteq^{\mathbf{T}} y) \rrbracket} \ar[rr, rightarrowtail, "{\id \times \langle a, b \rangle}"] \ar[rrrr, bend right, "{\id \times a}"'] \&\&
{\mathbf{T} Z \times \mathbf{P} Z \times \mathbf{P} Z} \ar[rr, "{\langle \pi^1, \pi^2 \rangle}" ] \&\& {\mathbf{T} Z \times \mathbf{P} Z} ,
\end{tikzcd}
\]
it is evident that
\[
(\id \times a)^* \big( \llbracket w, t \mid w \subseteq^{\mathbf{T}} t \rrbracket) = (\id \times \langle a, b \rangle)^* \llbracket w, u, v \mid w \subseteq^{\mathbf{T}} u \rrbracket .
\]
Similarly,
\[
(\id \times b)^* \big( \llbracket w, t \mid w \subseteq^{\mathbf{T}} t \rrbracket) = (\id \times \langle a, b \rangle)^* \llbracket w, u, v \mid w \subseteq^{\mathbf{T}} v \rrbracket .
\]
So
\[
(\id \times a)^* \big( \llbracket w, t \mid w \subseteq^{\mathbf{T}} t \rrbracket) = (\id \times b)^* \big( \llbracket w, t \mid w \subseteq^{\mathbf{T}} t \rrbracket) ,
\]
and $f$ represents them as a subobject of $\llbracket z, x, y \mid \forall z . (z \subseteq^{\mathbf{T}} x \leftrightarrow z \subseteq^{\mathbf{T}} y) \rrbracket$.

By uniqueness of $\chi$ in (\ref{PT}), we conclude that $a = b$.
\end{proof}

\begin{cor} \label{ExtCor}
$\mathcal{U} \models \mathrm{Ext}_S$
\end{cor}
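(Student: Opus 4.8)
The plan is to obtain the corollary from Proposition~\ref{Extentionality} instantiated at $Z = U$, which gives extensionality for the categorical membership relation $\in_U$ of sort $U \times \mathbf{P}U$ and elements of $\mathbf{P}U$. What is wanted, by contrast, is a statement about $\epsin$ of sort $U \times U$, restricted to sets via the conjuncts $S(x), S(y)$. Concretely, by the semantic rule that an implication is valid exactly when the interpretation of its antecedent is $\leq$ that of its consequent, it suffices to establish in $\mathrm{Sub}(U \times U)$ the inequality
$$\llbracket x, y : U \mid S(x) \wedge S(y) \wedge \forall z : U .\, (z \epsin x \leftrightarrow z \epsin y) \rrbracket \;\leq_{U \times U}\; \llbracket x, y : U \mid x = y \rrbracket,$$
after which $\mathcal{U} \models \mathrm{Ext}_S$ follows. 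So the task reduces to transporting Proposition~\ref{Extentionality} across the mono $m_S : \mathbf{P}U \rightarrowtail U$ interpreting sethood.

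The first key step is to reconcile the two membership relations. Since $\mathcal{U}$ interprets $\epsin$ by $m_{\epsin} = (\id_U \times m_S) \circ m_{\in_U}$, a factorization of monos, I would use the standard fact that pulling a mono back along a mono through which it factors returns the inner factor: pulling $\llbracket z, x : U \mid z \epsin x \rrbracket$ back along $\id_U \times m_S : U \times \mathbf{P}U \rightarrowtail U \times U$ yields $\llbracket z : U, x' : \mathbf{P}U \mid z \in x' \rrbracket$. Because substitution of a term is interpreted by pullback along that term, and $\langle z, m_S(x') \rangle$ is exactly the morphism $\id_U \times m_S$, this reads $\llbracket z : U, x' : \mathbf{P}U \mid z \epsin m_S(x') \rrbracket = \llbracket z : U, x' : \mathbf{P}U \mid z \in x' \rrbracket$.

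The second step handles the $S$-conjuncts. A routine computation shows that $\llbracket x, y \mid S(x) \wedge S(y) \rrbracket$ is the subobject represented by $m_S \times m_S : \mathbf{P}U \times \mathbf{P}U \rightarrowtail U \times U$, so meeting with it is computed as $\exists_{m_S \times m_S} \big( (m_S \times m_S)^* (-) \big)$. Pulling the middle conjunct back along $m_S \times m_S$ and applying the first step converts $\forall z .\, (z \epsin x \leftrightarrow z \epsin y)$ into $\forall z : U .\, (z \in x' \leftrightarrow z \in y')$ over $\mathbf{P}U \times \mathbf{P}U$; Proposition~\ref{Extentionality} at $Z = U$ then bounds this above by the diagonal $\llbracket x', y' \mid x' = y' \rrbracket$. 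Finally I apply $\exists_{m_S \times m_S}$ (monotone, and along a mono just post-composition): the diagonal $\Delta_{\mathbf{P}U}$ is sent to $\langle m_S, m_S \rangle = \Delta_U \circ m_S$, which factors through $\Delta_U = \llbracket x, y : U \mid x = y \rrbracket$. Chaining these inequalities gives the displayed bound.

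The main obstacle I anticipate is the careful bookkeeping of the first step, namely verifying that the pullback of $m_{\epsin}$ along $\id_U \times m_S$ is \emph{exactly} $m_{\in_U}$ rather than a larger subobject, and confirming that substituting $m_S(x')$ really does correspond to pullback along $\id_U \times m_S$ in the semantics; everything downstream is then formal manipulation with the image functor and the diagonal. It is also worth recording that the uniqueness in Proposition~\ref{Extentionality} (inherited from Proposition~\ref{strenghten def}(\ref{strengthen power}) and property~(\ref{PT})) is what makes that bound genuinely an equality of elements of $\mathbf{P}U$, so the transport yields equality in $U$ as required.
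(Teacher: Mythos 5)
Your proposal is correct and follows essentially the same route as the paper: instantiate Proposition~\ref{Extentionality} at $Z = U$ and then transport the result across $m_S$ from a statement about $\in_U$ on $\mathbf{P}U$ to one about $\epsin$ on $U$ restricted by the sethood conjuncts --- the paper compresses this transport into two ``routine categorical semantics'' equivalences, which you have simply unpacked (correctly) in terms of pullback along $\id_U \times m_S$ and the image functor $\exists_{m_S \times m_S}$. The closing remark about uniqueness is harmless but unnecessary, since the corollary only uses the statement of Proposition~\ref{Extentionality}, not the uniqueness argument inside its proof.
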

\begin{proof}
By Proposition \ref{Extensionality}, 
$$\mathbf{M} \models \forall x : \mathbf{P} U . \forall y : \mathbf{P} U . \big[ \big( \forall z : U . (z \in x \leftrightarrow z \in y) \big) \rightarrow x = y \big].$$ 
Now, by routine categorical semantics,
$$
\begin{array}{rl}
& \mathbf{M} \models \forall x, y : \mathbf{P} U . \big[ \big( \forall z : U . (z \in x \leftrightarrow z \in y) \big) \rightarrow x = y \big] \\
\iff & \mathbf{M} \models \forall  x, y : \mathbf{P} U . \big[ \big( \forall z : U . (z \epsin m_S(x) \leftrightarrow z \epsin m_S(y)) \big) \rightarrow x = y \big] \\
\iff & \mathbf{M} \models \forall x\hspace{1pt}', y\hspace{1pt}' : U . \big[ \big(S(x\hspace{1pt}') \wedge S(y\hspace{1pt}')\big) \rightarrow \\
& \big( ( \forall z : U . (z \epsin x\hspace{1pt}' \leftrightarrow z \epsin y\hspace{1pt}') ) \rightarrow x\hspace{1pt}' = y\hspace{1pt}' \big) \big] . \\
\end{array}
$$
So $\mathcal{U} \models \mathrm{Ext}_S$.
\end{proof}

The only axiom of $\mathrm{INFU}_\mathsf{Set}$ left to validate is $\mathrm{SC}_S$ (i.e. Stratified Comprehension). In order to approach this, we first need to construct some signatures and define stratification for an appropriate internal language:

\begin{dfn} \label{MoreSignatures}
Let $\mathcal{S}^M_{N, \in}$ be the subsignature of $\mathcal{S}^M_M$ containing $\mathcal{S}^M_N$ and the relation symbol $\in_A$, which is identified with $m_{\in_A}$, for each object $A$ in $\mathbf{N}$. 

Stratification in the language of $\mathcal{S}^M_{N, \in}$ is defined analogously as in Definition \ref{DefStrat}. A stratification function $s$, of an $\mathcal{S}^M_{N, \in}$-formula $\phi$, is an assignment of a {\em type} in $\mathbb{N}$ to each term in $\phi$, subject to the following conditions (where $\equiv$ is syntactic equality; $n \in \mathbb{N}$; $u, v, w, w_1, \dots, w_n$ are $\mathcal{S}^M_{N, \in}$-terms in $\phi$; $\theta$ is an atomic subformula of $\phi$; $R$ is a relation symbol in $\mathcal{S}^M_N$ which is not equal to $\in_X$ for any $X$ in $\mathbf{N}$; and $A$ is an object in $\mathbf{N}$):
\begin{enumerate}[(i)]
\item if $u \equiv v(w_1, \dots, w_n)$, then $s(u) = s(w_1) = \dots = s(w_n)$,
\item if $\theta \equiv R(w_1, \dots, w_n)$, then $s(w_1) = \dots = s(w_n)$,
\item if $\theta \equiv (u \in_A w)$, then $s(u) + 1 = s(w)$,
\end{enumerate}

It can easily be seen that every stratifiable formula $\phi$ has a {\em minimal} stratification $s_\phi$, in the sense that for every stratification $s$ of $\phi$ and for every term $t$ in $\phi$, $s_\phi(t) \leq s(t)$. Moreover, the minimal stratification, $s_\phi$, is determined by the restriction of $s_\phi$ to the set of variables in $\phi$.

Let $\mathcal{S}^M_{N, \iota}$ be the subsignature of $\mathcal{S}^M_M$ containing $\mathcal{S}^M_N$ and the function symbol $\iota_A$ for each object $A$ in $\mathbf{N}$.

If $\phi$ is a formula in either of these languages, then $\mathbf{M} \models \phi$ is to be understood as satisfaction in the natural $\mathcal{S}^\mathbf{M}_\mathbf{M}$-structure.
\end{dfn}

We start by verifying a form of comprehension for $\mathcal{S}^\mathbf{M}_\mathbf{N}$-formulae:

\begin{prop}\label{ComprehensionN}
If $\phi(w, y)$ is an $\mathcal{S}^\mathbf{M}_\mathbf{N}$-formula, with context $w : \mathbf{T}Z, y : Y$ for $Z, Y$ in $\mathbf N$, and in which $x$ is not free, then
$$\mathbf{M} \models \forall y : Y . \exists x : \mathbf{P} Z . \forall w : \mathbf{T} Z . (w \subseteq^{\mathbf{T}} x \leftrightarrow \phi(w, y)).$$
\end{prop}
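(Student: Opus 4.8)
The plan is to read the subobject cut out by $\phi$ off against the power-object property (\ref{PT}). Since $\phi(w,y)$ is an $\mathcal{S}^\mathbf{M}_\mathbf{N}$-formula in the context $w : \mathbf{T}Z, y : Y$, its interpretation $\llbracket w, y \mid \phi(w,y) \rrbracket$ is a subobject of $\mathbf{T}Z \times Y$ which, because $\mathbf{N}$ is a Heyting subcategory of $\mathbf{M}$, may be computed inside $\mathbf{N}$ and is assigned the same subobject whether evaluated in the natural $\mathcal{S}^\mathbf{M}_\mathbf{N}$-structure or the natural $\mathcal{S}^\mathbf{M}_\mathbf{M}$-structure (as remarked just before the proposition). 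So I would first fix a representing mono $r : R \rightarrowtail \mathbf{T}Z \times Y$ for it that lies in $\mathbf{N}$; by Proposition \ref{strenghten def}(\ref{strengthen mono}), this $r$ is then also monic in $\mathbf{M}$.

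Next I apply the power object axiom (\ref{PT}) with $A := Z$ and $B := Y$. This yields a morphism $\chi : Y \rightarrow \mathbf{P}Z$ in $\mathbf{N}$, unique in $\mathbf{M}$, making $R$ the pullback of $m_{\subseteq^{\mathbf{T}}_Z}$ along $\id \times \chi$. Reading this pullback through the recursive clause for atomic formulae in the categorical semantics, the pullback of $m_{\subseteq^{\mathbf{T}}_Z}$ along $\id \times \chi$ is precisely $\llbracket w, y \mid w \subseteq^{\mathbf{T}} \chi(y) \rrbracket$, the term tuple $(w, \chi(y))$ being interpreted by $\id \times \chi$. Hence the two subobjects of $\mathbf{T}Z \times Y$ coincide, which is to say
$$\mathbf{M} \models \forall w : \mathbf{T}Z . \forall y : Y . \big( \phi(w,y) \leftrightarrow w \subseteq^{\mathbf{T}} \chi(y) \big).$$

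Finally, I use the witnessing term $\chi(y)$ to pass to the existential. Rearranging the displayed equivalence gives $\mathbf{M} \models \forall y : Y . \forall w : \mathbf{T}Z . ( w \subseteq^{\mathbf{T}} \chi(y) \leftrightarrow \phi(w,y))$, i.e. the matrix of the desired statement holds with $x$ instantiated to the term $\chi(y)$. By soundness of $\exists$-introduction in the categorical semantics (the sound direction of Theorem \ref{Completeness}; concretely, the graph $\langle \chi, \id_Y \rangle$ factors through the interpretation of the matrix, and composing with the projection $\mathbf{P}Z \times Y \rightarrow Y$ shows that its existential quantification is the maximal subobject $Y$), I conclude
$$\mathbf{M} \models \forall y : Y . \exists x : \mathbf{P}Z . \forall w : \mathbf{T}Z . ( w \subseteq^{\mathbf{T}} x \leftrightarrow \phi(w,y)).$$

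I expect the only genuine subtlety to be the first step: ensuring that the interpretation of $\phi$ is represented by a mono truly living in $\mathbf{N}$ (so that (\ref{PT}), which demands $r$ in $\mathbf{N}$, applies) rather than merely a mono of $\mathbf{M}$. This is exactly where the hypothesis that $\phi$ is an $\mathcal{S}^\mathbf{M}_\mathbf{N}$-formula, together with $\mathbf{N}$ being a Heyting subcategory of $\mathbf{M}$, does the work. Everything afterwards is a direct transcription of the power-object property into the internal language, closely paralleling the standard topos-theoretic proof of comprehension.
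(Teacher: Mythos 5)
Your proposal is correct and follows essentially the same route as the paper's proof: both interpret $\phi$ as a mono in $\mathbf{N}$ (justified by $\phi$ being an $\mathcal{S}^\mathbf{M}_\mathbf{N}$-formula and $\mathbf{N}$ a Heyting subcategory), apply the power-object property (\ref{PT}) to obtain $\chi : Y \rightarrow \mathbf{P}Z$, identify the resulting pullback with $\llbracket w, y \mid w \subseteq^{\mathbf{T}} \chi(y) \rrbracket$, and conclude by existential introduction. Your explicit attention to why the representing mono lives in $\mathbf{N}$ is exactly the point the paper handles by taking the instance of (\ref{PT}) in $\mathbf{N}$.
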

\begin{proof}
This is a familiar property of power objects. Considering this instance of (PT) in $\mathbf{N}$:
\begin{equation}
\begin{tikzcd}[ampersand replacement=\&, column sep=small]
\llbracket w : \mathbf{T} Z, y : Y \mid \phi(w, y)\rrbracket \ar[rr] \ar[d, rightarrowtail] \&\& \subseteq^{\mathbf{T}}_{Z} \ar[d, rightarrowtail] \\
\mathbf{T} Z \times Y \ar[rr, "{\id \times \chi}"] \&\& \mathbf{T} Z \times \mathbf{P} Z
\end{tikzcd}
\end{equation}
This pullback along $\id \times \chi$ can be expressed as $\llbracket w : \mathbf{T} Z, y : Y \mid w \subseteq^{\mathbf{T}}_{Z} \chi(y)\rrbracket$ in $\mathbf{N}$. So since $\mathbf{N}$ is a Heyting subcategory of $\mathbf{M}$,
$$\mathbf{M} \models \forall y : Y . \forall w : \mathbf{T} Z . (w \subseteq^{\mathbf{T}} \chi(y) \leftrightarrow \phi(w, y)) \text{, and}$$
$$\mathbf{M} \models \forall y : Y . \exists x : \mathbf{P}Z . \forall w : \mathbf{T} Z . (w \subseteq^{\mathbf{T}} x \leftrightarrow \phi(w, y)),$$
as desired.
\end{proof}

To obtain stratified comprehension for $\mathcal{S}^\mathbf{M}_{\mathbf{N}, \in}$-formulae, we need to establish certain coherence conditions. The facts that $\mathbf{N}$ is a Heyting subcategory of $\mathbf{M}$ and that $\mathbf{T}$ preserves limits as an endofunctor of $\mathbf{N}$ (see Proposition \ref{strenghten def} (\ref{strengthen T limits})), enable us to prove that certain morphisms constructed in $\mathbf{M}$ also exist in $\mathbf{N}$, as in the lemmata below. This is useful when applying (\ref{PT}), since the relation $R$ is required to be in $\mathbf{N}$ (see Definition \ref{IMLU_CAT}).

\begin{lemma} \label{TandProductsCommute}
Let $n \in \mathbb{N}$.
\begin{enumerate}[{\normalfont (a)}]
\item $\iota_1 : 1 \rightarrow \mathbf{T}1$ is an isomorphism in $\mathbf{N}$. 
\item For any $A, B$ of $\mathbf{N}$, $(\iota_{A} \times \iota_{B}) \circ \iota_{A \times B}^{-1} : \mathbf{T}(A \times B) \xrightarrow{\sim} A \times B \xrightarrow{\sim} \mathbf{T} A \times \mathbf{T} B$ is an isomorphism in $\mathbf{N}$.
\item For any $A_1, \dots, A_n$ in $\mathbf{N}$, 
\begin{align*}
& (\iota_{A_1} \times \dots \times \iota_{A_n}) \circ \iota_{A_1 \times \dots \times A_n}^{-1} : \\
& \mathbf{T}(A_1 \times \dots \times A_n) \xrightarrow{\sim} A_1 \times \dots \times A_n \xrightarrow{\sim} \mathbf{T} A_1 \times \dots \times \mathbf{T} A_n
\end{align*}
is an isomorphism in $\mathbf{N}$.
\end{enumerate}
\end{lemma}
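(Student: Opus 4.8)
The plan is to exploit Proposition \ref{strenghten def}(\ref{strengthen T limits}), which says that $\mathbf{T} : \mathbf{N} \rightarrow \mathbf{N}$ preserves finite limits, together with the naturality of $\iota$ as a transformation on $\mathbf{M}$. The key subtlety — and the reason the statement is not completely trivial — is that the components $\iota_A$ need not be morphisms of $\mathbf{N}$ (compare the caution in the proof of Theorem \ref{ConClassConCat}, where $\iota_V$ is a proper class). Hence one cannot simply argue inside $\mathbf{N}$ using $\iota$. Instead, for each part I would produce a comparison morphism built entirely from $\mathbf{N}$-data, show it is an isomorphism in $\mathbf{N}$ by the universal property of limits in $\mathbf{N}$, and only then identify it with the given $\iota$-expression by a computation carried out in $\mathbf{M}$.

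For (a): since the terminal object is the limit of the empty diagram, Proposition \ref{strenghten def}(\ref{strengthen T limits}) gives that $\mathbf{T}1$ is terminal in $\mathbf{N}$, so the unique $\mathbf{N}$-morphism $1 \rightarrow \mathbf{T}1$ is an isomorphism in $\mathbf{N}$. On the other hand, by Proposition \ref{strenghten def}(\ref{strengthen T heyting}), $\mathbf{T}$ is naturally isomorphic to the identity on $\mathbf{M}$, so $\mathbf{T}1$ is also terminal in $\mathbf{M}$; consequently the unique $\mathbf{M}$-morphism $1 \rightarrow \mathbf{T}1$, which is exactly $\iota_1$, must coincide with that $\mathbf{N}$-isomorphism. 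Hence $\iota_1$ is an isomorphism in $\mathbf{N}$.

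For (b) and (c) (where (b) is (c) with $n = 2$): by Proposition \ref{strenghten def}(\ref{strengthen T limits}), $\mathbf{T}$ carries the product cone $(A_1 \times \dots \times A_n; \pi^1, \dots, \pi^n)$ to a product cone $(\mathbf{T}(A_1 \times \dots \times A_n); \mathbf{T}\pi^1, \dots, \mathbf{T}\pi^n)$ in $\mathbf{N}$. Since $\mathbf{T}A_1 \times \dots \times \mathbf{T}A_n$ is the fixed product in $\mathbf{N}$, the comparison morphism $c =_\mathrm{df} \langle \mathbf{T}\pi^1, \dots, \mathbf{T}\pi^n \rangle$ between these two products of the same objects is an isomorphism in $\mathbf{N}$. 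It then remains to check that $c$ equals $d =_\mathrm{df} (\iota_{A_1} \times \dots \times \iota_{A_n}) \circ \iota_{A_1 \times \dots \times A_n}^{-1}$. This is a routine diagram chase in $\mathbf{M}$: naturality of $\iota$ at $\pi^i$ gives $\mathbf{T}\pi^i = \iota_{A_i} \circ \pi^i \circ \iota_{A_1 \times \dots \times A_n}^{-1}$, while $\pi^i \circ (\iota_{A_1} \times \dots \times \iota_{A_n}) = \iota_{A_i} \circ \pi^i$, so $\pi^i \circ d = \mathbf{T}\pi^i = \pi^i \circ c$ for every $i$. By uniqueness of the pairing into the product $\mathbf{T}A_1 \times \dots \times \mathbf{T}A_n$ (which is also a product in $\mathbf{M}$, as the fixed product functor restricts from $\mathbf{M}$ to $\mathbf{N}$), we conclude $d = c$, so $d$ is the $\mathbf{N}$-isomorphism $c$.

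The main obstacle is conceptual rather than computational: one must resist treating $\iota$ as living in $\mathbf{N}$. The whole point is to transfer the property ``isomorphism in $\mathbf{N}$'' through the genuinely $\mathbf{N}$-internal comparison map $c$, using $\mathbf{M}$ only to verify the equality $d = c$; everything else is bookkeeping with the universal properties of limits and products.
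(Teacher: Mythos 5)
Your proof is correct and follows essentially the same route as the paper's: use Proposition \ref{strenghten def}(\ref{strengthen T limits}) to see that $\mathbf{T}$ carries the terminal object and product cones to limits in $\mathbf{N}$ (hence in $\mathbf{M}$), and then identify the $\iota$-expression with the canonical comparison isomorphism by composing with projections and invoking naturality of $\iota$. The only cosmetic difference is that you treat the $n$-ary case directly where the paper does the binary case and then inducts; your explicit remark that the $\iota_A$ need not lie in $\mathbf{N}$ correctly isolates why the lemma is not vacuous.
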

\begin{proof}
\begin{enumerate}
\item Since $\mathbf{T}$ preserves limits, $\mathbf{T}\mathbf{1}$ is terminal in $\mathbf{N}$, and since $\mathbf{N}$ is a Heyting subcategory of $\mathbf{M}$, $\mathbf{T}\mathbf{1}$ is terminal in $\mathbf{M}$ as well. So by the universal property of terminal objects, $\mathbf{1}$ and $\mathbf{T}\mathbf{1}$ are isomorphic in $\mathbf{N}$, and the isomorphisms must be $\iota_\mathbf{1}$ and $\iota^{-1}_\mathbf{1}$.
\item Since $\mathbf{T}$ preserves limits, $\mathbf{T}(A \times B)$ is a product of $\mathbf{T}A$ and $\mathbf{T}B$ in $\mathbf{N}$, and since $\mathbf{N}$ is a Heyting subcategory of $\mathbf{M}$, it is such a product in $\mathbf{M}$ as well. Now note that 
$$\pi_{\mathbf{T} A \times \mathbf{T} B}^1 \circ (\iota_A \times \iota_B) \circ \iota^{-1}_{A \times B} = \iota_A \circ \pi_{A \times B}^1 \circ \iota_{A \times B}^{-1} = \mathbf{T}\pi_{A \times B}^1,$$ 
and similarly for the second projection. The left equality is a basic fact about projection morphisms. The right equality follows from that $\iota$ is a natural isomorphism. This means that 
$$(\iota_A \times \iota_B) \circ \iota^{-1}_{A \times B} : \mathbf{T}(A \times B) \xrightarrow{\sim} \mathbf{T}A \times \mathbf{T}B$$ 
is the unique universal morphism provided by the definition of product. Hence, it is an isomorphism in $\mathbf{N}$. 
\item This follows from the two items above by induction on $n$.
\end{enumerate}
\end{proof}

\begin{lemma} \label{iotaTermConversion}
Let $n \in \mathbb{N}$. If $u : A_1 \times \dots \times A_n \rightarrow B$ is a morphism in $\mathbf{N}$, then there is a morphism $v : \mathbf{T} A_1 \times \dots \times \mathbf{T} A_n \rightarrow \mathbf{T} B$ in $\mathbf{N}$, such that 
$$\iota_B \circ u = v \circ (\iota_{A_1} \times \dots \times \iota_{A_n}).$$
\end{lemma}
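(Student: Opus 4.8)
The plan is to derive this directly from the naturality of $\iota$ together with the previous Lemma \ref{TandProductsCommute}. First I would record the naturality square for $\iota : \id_\mathbf{M} \xrightarrow{\sim} \mathbf{T}$ applied to the morphism $u$: writing $X =_{\mathrm{df}} A_1 \times \dots \times A_n$, naturality gives $\iota_B \circ u = \mathbf{T}u \circ \iota_{X}$, where $\mathbf{T}u : \mathbf{T}X \rightarrow \mathbf{T}B$ is a morphism of $\mathbf{N}$ since $\mathbf{T}$ restricts to an endofunctor of $\mathbf{N}$. So the entire difficulty reduces to converting the domain $\mathbf{T}(A_1 \times \dots \times A_n)$ of $\mathbf{T}u$ into the product $\mathbf{T}A_1 \times \dots \times \mathbf{T}A_n$, in a way that stays inside $\mathbf{N}$.

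For that conversion I would invoke Lemma \ref{TandProductsCommute}(c), which supplies the isomorphism $\alpha =_{\mathrm{df}} (\iota_{A_1} \times \dots \times \iota_{A_n}) \circ \iota_{A_1 \times \dots \times A_n}^{-1} : \mathbf{T}(A_1 \times \dots \times A_n) \xrightarrow{\sim} \mathbf{T} A_1 \times \dots \times \mathbf{T} A_n$, and crucially asserts that this is an isomorphism \emph{in} $\mathbf{N}$. I would then simply set $v =_{\mathrm{df}} \mathbf{T}u \circ \alpha^{-1}$. As a composite of morphisms of $\mathbf{N}$ (namely $\mathbf{T}u$ and the $\mathbf{N}$-isomorphism $\alpha^{-1}$), $v$ is a morphism of $\mathbf{N}$, giving the required $v : \mathbf{T}A_1 \times \dots \times \mathbf{T}A_n \rightarrow \mathbf{T}B$.

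It then remains to verify the defining equation, which is a short calculation. Since $\alpha^{-1} = \iota_{A_1 \times \dots \times A_n} \circ (\iota_{A_1} \times \dots \times \iota_{A_n})^{-1}$, postcomposing with $\iota_{A_1} \times \dots \times \iota_{A_n}$ collapses the two inverse factors, so that $\alpha^{-1} \circ (\iota_{A_1} \times \dots \times \iota_{A_n}) = \iota_{A_1 \times \dots \times A_n} = \iota_X$. Hence $v \circ (\iota_{A_1} \times \dots \times \iota_{A_n}) = \mathbf{T}u \circ \iota_X = \iota_B \circ u$, where the last equality is exactly the naturality square recorded above.

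I do not expect any genuine obstacle here; the statement is essentially a bookkeeping consequence of naturality. The only point requiring attention — and the reason Lemma \ref{TandProductsCommute} is cited rather than merely the fact that $\mathbf{T}$ preserves products — is that every morphism appearing in the construction of $v$ (that is, $\mathbf{T}u$, $\alpha$, and therefore $\alpha^{-1}$) must live in $\mathbf{N}$ and not merely in $\mathbf{M}$. This is guaranteed because $\mathbf{T}$ is an endofunctor of $\mathbf{N}$ and because $\alpha$ is an $\mathbf{N}$-isomorphism by the cited lemma.
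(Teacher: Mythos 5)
Your proposal is correct and is essentially identical to the paper's own proof: the paper also applies naturality of $\iota$ to get $\iota_B \circ u = (\mathbf{T}u) \circ \iota_{A_1 \times \dots \times A_n}$ and then sets $v = (\mathbf{T}u) \circ \iota_{A_1 \times \dots \times A_n} \circ (\iota_{A_1} \times \dots \times \iota_{A_n})^{-1}$, which is exactly your $\mathbf{T}u \circ \alpha^{-1}$ with $\alpha$ the isomorphism from Lemma \ref{TandProductsCommute}. Your explicit remark that every factor of $v$ lies in $\mathbf{N}$ is the right point to emphasize and matches the role that lemma plays in the paper.
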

\begin{proof}
Since $\iota$ is a natural transformation, 
$$\iota_B \circ u = (\mathbf{T}u) \circ \iota_{A_1 \times \dots \times A_n}.$$
Since $\iota$ is a natural isomorphism, 
$$\iota_{A_1 \times \dots \times A_n} = \iota_{A_1 \times \dots \times A_n} \circ (\iota_{A_1} \times \dots \times \iota_{A_n})^{-1} \circ (\iota_{A_1} \times \dots \times \iota_{A_n}).$$ 
Thus, by letting $v = (\mathbf{T}u) \circ \iota_{A_1 \times \dots \times A_n} \circ (\iota_{A_1} \times \dots \times \iota_{A_n})^{-1}$, the result is obtained from Lemma \ref{TandProductsCommute}. 
\end{proof}

\begin{constr} \label{iotaRelationConstruction}
Let $n \in \mathbb{N}$, and let $m_R : R \rightarrowtail A_1 \times \dots \times A_n$ be a morphism in $\mathbf{N}$ that is monic in $\mathbf{M}$; i.e. $m_R$ is a relation symbol in $\mathcal{S}^\mathbf{M}_\mathbf{N}$. Using the isomorphism obtained in Lemma \ref{TandProductsCommute}, we construct $\hat{\mathbf{T}} m_R : \hat{\mathbf{T}} R \rightarrowtail \mathbf{T} A_1 \times \dots \times \mathbf{T} A_n$ in $\mathbf{N}$ as the pullback of $\mathbf{T}m_R$ along that isomorphism:
\[
\begin{tikzcd}[ampersand replacement=\&, column sep=small]
{\hat{\mathbf{T}} R} \ar[rr, "\sim"] \ar[d, rightarrowtail, "{\hat{\mathbf{T}} m_R}"'] \&\& {\mathbf{T} R}  \ar[d, rightarrowtail, "{\mathbf{T} m_R}"] \\
{\mathbf{T} A_1 \times \dots \times \mathbf{T} A_n} \ar[rr, "\sim"] \&\& {\mathbf{T} (A_1 \times \dots \times A_n)}
\end{tikzcd}
\] 
\end{constr}
Note that the definition of $\hat{\mathbf{T}}m_R$ implicitly depends on the factorization $A_1 \times A_n$ chosen for the co-domain of $m_R$.

\begin{lemma} \label{iotaRelationConversion}
Let $m_R : R \rightarrowtail A_1 \times \dots \times A_n$ be as in Construction \ref{iotaRelationConstruction}.
\begin{align*}
\mathbf{M} \models \forall x_1 : A_1 \dots \forall x_n : A_n . \big( & (\mathbf{T} m_R)(\iota_{A_1 \times \dots \times A_n}(x_1, \dots, x_n)) \leftrightarrow \\ 
& (\hat{\mathbf{T}} m_R)(\iota_{A_1}(x_1), \dots, \iota_{A_n}(x_n)) \big).
\end{align*}
\end{lemma}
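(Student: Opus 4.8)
The plan is to unfold both sides of the biconditional through the categorical semantics into subobjects of $A_1 \times \dots \times A_n$ and check that they coincide. First I would observe that, since $\mathbf{T} m_R : \mathbf{T} R \rightarrowtail \mathbf{T}(A_1 \times \dots \times A_n)$ is, as a relation symbol of $\mathcal{S}^\mathbf{M}_\mathbf{N}$, unary on the sort $\mathbf{T}(A_1 \times \dots \times A_n)$, and the term $\iota_{A_1 \times \dots \times A_n}(x_1, \dots, x_n)$ is interpreted by the morphism $\iota_{A_1 \times \dots \times A_n} : A_1 \times \dots \times A_n \rightarrow \mathbf{T}(A_1 \times \dots \times A_n)$, the left-hand subobject is the pullback $\iota_{A_1 \times \dots \times A_n}^* (\mathbf{T} m_R)$. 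Likewise, since $\hat{\mathbf{T}} m_R$ is $n$-ary on the sort $\mathbf{T} A_1 \times \dots \times \mathbf{T} A_n$ and the tuple $(\iota_{A_1}(x_1), \dots, \iota_{A_n}(x_n))$ is interpreted by $\iota_{A_1} \times \dots \times \iota_{A_n}$, the right-hand subobject is $(\iota_{A_1} \times \dots \times \iota_{A_n})^* (\hat{\mathbf{T}} m_R)$. The claimed biconditional holds in $\mathbf{M}$ precisely when these two subobjects of $A_1 \times \dots \times A_n$ are equal.

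Next I would invoke Construction \ref{iotaRelationConstruction} directly: $\hat{\mathbf{T}} m_R$ is defined as the pullback of $\mathbf{T} m_R$ along the bottom isomorphism of that construction, which is the inverse $\theta^{-1}$ of the isomorphism $\theta = (\iota_{A_1} \times \dots \times \iota_{A_n}) \circ \iota_{A_1 \times \dots \times A_n}^{-1}$ supplied by Lemma \ref{TandProductsCommute}(c). Hence $\hat{\mathbf{T}} m_R \cong (\theta^{-1})^* (\mathbf{T} m_R)$ as subobjects of $\mathbf{T} A_1 \times \dots \times \mathbf{T} A_n$.

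The calculation then reduces to functoriality of inverse image (equivalently, the pasting of two pullback squares is a pullback, the basic fact used repeatedly above):
\[
(\iota_{A_1} \times \dots \times \iota_{A_n})^* (\hat{\mathbf{T}} m_R) \cong (\iota_{A_1} \times \dots \times \iota_{A_n})^* (\theta^{-1})^* (\mathbf{T} m_R) \cong \big( \theta^{-1} \circ (\iota_{A_1} \times \dots \times \iota_{A_n}) \big)^* (\mathbf{T} m_R).
\]
Since $\theta^{-1} = \iota_{A_1 \times \dots \times A_n} \circ (\iota_{A_1} \times \dots \times \iota_{A_n})^{-1}$, the composite bottom morphism simplifies to $\theta^{-1} \circ (\iota_{A_1} \times \dots \times \iota_{A_n}) = \iota_{A_1 \times \dots \times A_n}$, so the right-hand subobject equals $\iota_{A_1 \times \dots \times A_n}^* (\mathbf{T} m_R)$, which is the left-hand subobject. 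This yields the desired equality of subobjects, and hence the biconditional.

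I do not expect a genuine obstacle here; the content is entirely bookkeeping. The only place to be careful is tracking the directions of the isomorphisms -- in particular that the bottom arrow of Construction \ref{iotaRelationConstruction} is $\theta^{-1}$ rather than $\theta$ -- and confirming that the interpretations of the two compound terms are exactly $\iota_{A_1 \times \dots \times A_n}$ and $\iota_{A_1} \times \dots \times \iota_{A_n}$, so that the cancellation $(\iota_{A_1} \times \dots \times \iota_{A_n})^{-1} \circ (\iota_{A_1} \times \dots \times \iota_{A_n}) = \id$ produces $\iota_{A_1 \times \dots \times A_n}$ on the nose.
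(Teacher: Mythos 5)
Your proposal is correct and follows essentially the same route as the paper: both unfold the two sides into pullbacks of $\mathbf{T}m_R$ and $\hat{\mathbf{T}}m_R$ along $\iota_{A_1 \times \dots \times A_n}$ and $\iota_{A_1} \times \dots \times \iota_{A_n}$ respectively, and then exploit the defining pullback square of $\hat{\mathbf{T}}m_R$ together with the identity $\iota_{A_1 \times \dots \times A_n} \circ (\iota_{A_1} \times \dots \times \iota_{A_n})^{-1} \circ (\iota_{A_1} \times \dots \times \iota_{A_n}) = \iota_{A_1 \times \dots \times A_n}$. The only cosmetic difference is that the paper re-represents each subobject by composing its mono with the inverse of the bottom isomorphism and compares representatives in a commuting diagram, whereas you phrase the same computation as pullback pasting and functoriality of $(-)^*$.
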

\begin{proof}
The subobjects 
\begin{align*}
P = \llbracket x_1 : A_1, \dots, x_n : A_n \mid (\mathbf{T} m_R)(\iota_{A_1 \times \dots \times A_n}(x_1, \dots, x_n)) \rrbracket \\
P' = \llbracket x_1 : A_1, \dots, x_n : A_n \mid (\hat{\mathbf{T}} m_R)(\iota_{A_1}(x_1), \dots, \iota_{A_n}(x_n)) \rrbracket
\end{align*}
of $A_1 \times \dots \times A_n$ are obtained as these pullbacks:
\[
\begin{tikzcd}[ampersand replacement=\&, column sep=small]
P \ar[rrrr, "f"] \ar[d, "g"']  \&\&\&\&  \mathbf{T}R \ar[d, "{\mathbf{T}m_R}"]  \\
{A_1 \times \dots \times A_n} \ar[rrrr, "{\iota_{A_1 \times \dots \times A_n}}"']  \&\&\&\&  {\mathbf{T}(A_1 \times \dots \times A_n)}  
\end{tikzcd}
\] 
\[
\begin{tikzcd}[ampersand replacement=\&, column sep=small]
P' \ar[rrrr, "f\hspace{2pt}'"] \ar[d, "g\hspace{1pt}'"']  \&\&\&\&  {\hat{\mathbf{T}}R} \ar[d, "{\hat{\mathbf{T}}m_R}"]  \\
{A_1 \times \dots \times A_n} \ar[rrrr, "{\iota_{A_1} \times \dots \times \iota_{A_n}}"']  \&\&\&\&  {\mathbf{T}A_1 \times \dots \times \mathbf{T}A_n}  
\end{tikzcd}
\] 
Since the bottom morphisms in both of these pullback-diagrams are isomorphisms, it follows from a basic fact about pullbacks that the top ones, $f$ and $f\hspace{2pt}'$, are also isomorphisms. So $P$ and $P'$, as subobjects of $A_1 \times \dots \times A_n$, are also represented by $\iota^{-1}_{A_1 \times \dots \times A_n} \circ \mathbf{T}m_R : \mathbf{T}R \rightarrow A_1 \times \dots \times A_n$ and $(\iota_{A_1} \times \dots \times \iota_{A_n})^{-1} \circ \hat{\mathbf{T}}m_R : \hat{\mathbf{T}}R \rightarrow A_1 \times \dots \times A_n$, respectively.

Now note that by construction of $\hat{\mathbf{T}}$, this diagram commutes:
\[
\begin{tikzcd}[ampersand replacement=\&, column sep=small]
{\hat{\mathbf{T}} R} \ar[rr, "\sim"] \ar[d, rightarrowtail, "{\hat{\mathbf{T}} m_R}"'] \&\& {\mathbf{T} R}  \ar[d, rightarrowtail, "{\mathbf{T} m_R}"] \\
{\mathbf{T} A_1 \times \dots \times \mathbf{T} A_n} \ar[rr, "\sim"] \ar[rd, "{\sim}"'] \&\& {\mathbf{T} (A_1 \times \dots \times A_n)} \ar[ld, "{\sim}"] \\
\& {A_1 \times \dots \times A_n}
\end{tikzcd}
\] 
Therefore, $\iota^{-1}_{A_1 \times \dots \times A_n} \circ \mathbf{T}m_R$ and $(\iota_{A_1} \times \dots \times \iota_{A_n})^{-1} \circ \hat{\mathbf{T}}m_R$ represent the same subobject of $A_1 \times \dots \times A_n$, as desired
\end{proof}

Let $n \in \mathbb{N}$. We recursively define iterated application of $\mathbf{P}$, $\mathbf{T}$ and $\hat{\mathbf{T}}$ in the usual way, as $\mathbf{P}^0 = \mathbf{id}_\mathbf{N}$, $\mathbf{P}^{k+1} = \mathbf{P} \circ \mathbf{P}^k$ (for $k \in \mathbb{N}$), etc. The iterated application of $\iota$ requires a special definition. We define $\iota^n_A : A \xrightarrow{\sim} \mathbf{T}^n A$ recursively by 
\begin{align*}
\iota^0_A &= \id_A, \\
\iota^{k+1}_A &= \iota_{\mathbf{T}^k A} \circ \iota^k_A : A \xrightarrow{\sim} \mathbf{T}^k A \xrightarrow{\sim} \mathbf{T}^{k+1} A \text{, where } k \in \mathbb{N}.
\end{align*}
Since $\iota$ is a natural isomorphism, we have by induction that $\iota^n : \id \xrightarrow{\sim} \mathbf{T}^n$ also is a natural isomorphism.

\begin{lemma} \label{iotaTypeIncrease}
Let $n, k \in \mathbb{N}$. Let $m_R : R \rightarrowtail A_1 \times \dots A_n$ be as in Construction \ref{iotaRelationConstruction}. 
\begin{align*}
\mathbf{M} \models \forall x_1 : A_1 \dots \forall x_n : A_n . \big( & m_R(x_1, \dots, x_n) \leftrightarrow \\
& (\hat{\mathbf{T}}^k m_R) (\iota^k x_1, \dots, \iota^k x_n)\big).
\end{align*}
\end{lemma}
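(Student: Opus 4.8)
The plan is to prove the statement by induction on $k$, the engine being a single-step version that follows by combining Lemma \ref{iotaRelationConversion} with the naturality of $\iota$. The base case $k = 0$ is immediate: there $\iota^0_{A_i} = \id_{A_i}$ and $\hat{\mathbf{T}}^0 m_R = m_R$, so the biconditional is literally $m_R(\vec x) \leftrightarrow m_R(\vec x)$, which is valid.

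First I would isolate a single-step claim: for any relation symbol $m_Q : Q \rightarrowtail B_1 \times \dots \times B_n$ in $\mathcal{S}^\mathbf{M}_\mathbf{N}$,
\begin{equation*}
\mathbf{M} \models \forall y_1 : B_1 \dots \forall y_n : B_n . \big( m_Q(y_1, \dots, y_n) \leftrightarrow (\hat{\mathbf{T}} m_Q)(\iota_{B_1} y_1, \dots, \iota_{B_n} y_n)\big). \tag{$\star$}
\end{equation*}
To see $(\star)$, Lemma \ref{iotaRelationConversion} already supplies $(\hat{\mathbf{T}} m_Q)(\iota_{B_1} y_1, \dots, \iota_{B_n} y_n) \leftrightarrow (\mathbf{T} m_Q)(\iota_{B_1 \times \dots \times B_n}(y_1, \dots, y_n))$, so it remains to identify $(\mathbf{T} m_Q)(\iota_{B_1 \times \dots \times B_n}(\vec y))$ with $m_Q(\vec y)$. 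This is precisely naturality of $\iota$ at $m_Q$: the square with vertical sides $m_Q$, $\mathbf{T} m_Q$ and horizontal sides the isomorphisms $\iota_Q$, $\iota_{B_1 \times \dots \times B_n}$ commutes, and a commuting square whose two horizontal edges are isomorphisms is automatically a pullback; hence pulling the subobject $\mathbf{T} m_Q$ back along $\iota_{B_1 \times \dots \times B_n}$ returns $m_Q$, which translates into the asserted equivalence of interpretations.

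For the inductive step, assuming the statement for $k$, I would apply $(\star)$ to the relation symbol $\hat{\mathbf{T}}^k m_R : \hat{\mathbf{T}}^k R \rightarrowtail \mathbf{T}^k A_1 \times \dots \times \mathbf{T}^k A_n$, taking $B_i = \mathbf{T}^k A_i$. This is legitimate because $\hat{\mathbf{T}}^k m_R$ is again a morphism in $\mathbf{N}$ that is monic in $\mathbf{M}$: by Proposition \ref{strenghten def}(\ref{strengthen T heyting}) the functor $\mathbf{T}$ is a Heyting endofunctor of $\mathbf{M}$ and so preserves monos, and each $\hat{\mathbf{T}}$ is then a pullback (taken in $\mathbf{N}$, hence in $\mathbf{M}$ since $\mathbf{N}$ is a Heyting subcategory) of such a mono along the isomorphism of Lemma \ref{TandProductsCommute}, so the property propagates up the tower. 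With this, $(\star)$ reads
\[
\mathbf{M} \models \forall \vec y . \big( (\hat{\mathbf{T}}^k m_R)(\vec y) \leftrightarrow (\hat{\mathbf{T}}^{k+1} m_R)(\iota_{\mathbf{T}^k A_1} y_1, \dots, \iota_{\mathbf{T}^k A_n} y_n)\big).
\]
Since $\iota^k$ is a natural isomorphism, reparametrizing the universally quantified variables along $\iota^k_{A_1} \times \dots \times \iota^k_{A_n}$ (i.e. substituting $y_i = \iota^k_{A_i} x_i$) preserves validity, and using $\iota_{\mathbf{T}^k A_i} \circ \iota^k_{A_i} = \iota^{k+1}_{A_i}$ this becomes
\[
\mathbf{M} \models \forall \vec x . \big( (\hat{\mathbf{T}}^k m_R)(\iota^k x_1, \dots, \iota^k x_n) \leftrightarrow (\hat{\mathbf{T}}^{k+1} m_R)(\iota^{k+1} x_1, \dots, \iota^{k+1} x_n)\big).
\]
Chaining this with the induction hypothesis $m_R(\vec x) \leftrightarrow (\hat{\mathbf{T}}^k m_R)(\iota^k \vec x)$ closes the induction.

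The difficulty here is not conceptual but bookkeeping: confirming at each stage that $\hat{\mathbf{T}}^k m_R$ remains an admissible relation symbol (so that Construction \ref{iotaRelationConstruction} and Lemma \ref{iotaRelationConversion} genuinely apply) and that its codomain is the factorization $\mathbf{T}^k A_1 \times \dots \times \mathbf{T}^k A_n$ produced by iterating $\hat{\mathbf{T}}$. The one genuinely categorical point to get right is the observation underlying $(\star)$ that the naturality square of $\iota$ is a pullback because its horizontal edges are isomorphisms, together with the legitimacy of reparametrizing the bound variables along the isomorphism $\iota^k$.
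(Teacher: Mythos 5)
Your proof is correct and uses essentially the same ingredients as the paper's: naturality of $\iota$ together with iterated application of Lemma \ref{iotaRelationConversion}, organized as an induction on $k$. The paper merely packages the bookkeeping slightly differently (one application of naturality of $\iota^k$ up front, followed by iterating the conversion lemma, rather than interleaving one naturality step and one conversion step per stage), so this is the same argument in substance.
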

\begin{proof}
Since $\iota^k : \id_\mathbf{M} \rightarrow \mathbf{T}^k$ is a natural isomorphism, 
\begin{align*}
\mathbf{M} \models \forall x_1 : A_1 \dots \forall x_n : A_n . \big( & m_R(x_1, \dots, x_n) \leftrightarrow \\
& (\mathbf{T}^k m_R) (\iota_{A_1 \times \dots \times A_n}^k (x_1, \dots, x_n))\big).
\end{align*}
By iterating Lemma \ref{iotaRelationConversion}, we obtain by induction that 
\begin{align*}
\mathbf{M} \models \forall x_1 : A_1 \dots \forall x_n : A_n . \big( & (\mathbf{T}^k m_R) (\iota_{A_1 \times \dots \times A_n}^k (x_1, \dots, x_n)) \leftrightarrow \\
& (\hat{\mathbf{T}}^k m_R) (\iota_{A_1}^k x_1, \dots, \iota_{A_n}^k x_n)\big).
\end{align*}
The result now follows by combining the two. 
\end{proof}

We shall now show that any stratified $\mathcal{S}^\mathbf{M}_{\mathbf{N}, \in}$-formula $\phi$ can be converted to an $\mathcal{S}^\mathbf{M}_\mathbf{N}$-formula $\phi^{\subseteq^{\mathbf{T}}}$, which is equivalent to $\phi$ in $\mathbf{M}$, i.e. $\mathbf{M} \models \phi \leftrightarrow \phi^{\subseteq^{\mathbf{T}}}$. 

\begin{constr} \label{StratConstruction}
Let $\phi$ be any stratified $\mathcal{S}^M_{N, \in}$-formula. Let $s_\phi$ be the minimal stratification of $\phi$, and let $\mathrm{max}_\phi$ be the maximum value attained by $s_\phi$.
	\begin{itemize}
	\item Let $\phi^\iota$ be the $\mathcal{S}^\mathbf{M}_{\mathbf{N}, \iota}$-formula obtained from $\phi$ by the construction below. We shall replace each atomic subformula $\theta$ of $\phi$ by another atomic formula which is equivalent to $\theta$ in $\mathbf{M}$. We divide the construction into two cases, depending on whether or not $\theta$ is of the form $\theta \equiv t \in_X t'$, for some $X$ in $\mathbf{N}$ and terms $t, t'$ in $\mathcal{S}^\mathbf{M}_\mathbf{N}$:
		\begin{enumerate}
		\item Suppose that $\theta$ is {\em not} of the form $\theta \equiv t \in_X t'$. Then $\theta$ is equivalent in $\mathbf{M}$ to a formula $m_R(x_1, \dots, x_n)$, where $x_1, \dots, x_n$ are variables, as such a monomorphism $m_R$ can be constructed in $\mathbf{N}$ from the interpretations of the relation-symbol and terms appearing in $\theta$. Note that by stratification, $s_\phi(x_1) = \dots = s_\phi(x_n)$. Let $k = \mathrm{max} - s_\phi(x_1)$. In $\phi$, replace $\theta$ by 
		$$(\hat{\mathbf{T}}^k m_R) (\iota^k x_1, \dots, \iota^k x_n).$$
It follows from Lemma \ref{iotaTypeIncrease} that this formula is equivalent to $\theta$ in $\mathbf{M}$.
		\item Suppose that $\theta \equiv u(x_1, \dots, x_n) \in_A v(y_1, \dots, y_{m})$, where $A$ is an object in $\mathbf{N}$, $u, v$ are terms in $\mathcal{S}^\mathbf{M}_\mathbf{N}$, and $x_1, \dots, x_n, y_1, \dots, y_{m}$ are variables. Note that by stratification, 
		$$s_\phi(u) + 1 = s_\phi(x_i) + 1 = s_\phi(v) = s_\phi(y_j),$$
		for each $1 \leq i \leq n$ and $1 \leq j \leq m$. Let $k_u = \mathrm{max}_\phi - s_\phi(u)$ and $k_v = \mathrm{max}_\phi - s_\phi(v)$, whence $k_u = k_v + 1$. By Proposition \ref{membership as iota subset}, $\theta$ is equivalent in $\mathbf{M}$ to 
		\[(\iota_A \circ u) (x_1, \dots, x_n) \subseteq^\mathbf{T}_A v(y_1, \dots, y_{m}).\] 
		Thus, by Lemma \ref{iotaTypeIncrease}, $\theta$ is equivalent in $\mathbf{M}$ to 
		\[(\iota^{k_u}_A \circ u) (x_1, \dots, x_n) (\hat{\mathbf{T}}^{k_v} \subseteq^{\mathbf{T}}) (\iota^{k_v}_{\mathbf{P}A} \circ v) (y_1, \dots, y_{m}).\]
		Now, by iterated application of Lemma \ref{iotaTermConversion}, there are morphisms $u\hspace{1pt}', v\hspace{1pt}'$ in $\mathbf{N}$, such that $\theta$ is equivalent in $\mathbf{M}$ to
		$$u\hspace{1pt}'(\iota^{k_u}(x_1), \dots, \iota^{k_u}(x_n)) (\hat{\mathbf{T}}^{k_v} \subseteq^{\mathbf{T}}) v\hspace{1pt}'(\iota^{k_v}(y_1), \dots, \iota^{k_v}(y_m)).$$ 
		Replace $\theta$ by this formula.
		\end{enumerate}
	\item Let $\phi^{\subseteq^{\mathbf{T}}}$ be the formula in the language of $\mathcal{S}^\mathbf{M}_\mathbf{N}$ obtained from $\phi^\iota$ as follows.
		\begin{itemize}
		\item Replace each term of the form $\iota^{\mathrm{max}_\phi - s_\phi(x)}(x)$ (where $x$ is a variable of sort $A$) by a fresh variable $x\hspace{1pt}'$ (of sort $\mathbf{T}^{\mathrm{max}_\phi - s_\phi(x)} A$).
		\item Replace each quantifier scope or context declaration $x : A$ by $x\hspace{1pt}' : \mathbf{T}^{\mathrm{max}_\phi - s_\phi(x)} A$.
		\end{itemize}
	\end{itemize}
\end{constr}

By construction $\phi^\iota$ is an $\mathcal{S}^\mathbf{M}_{\mathbf{N}, \iota}$-formula, which is equivalent to $\phi$ in $\mathbf{M}$. Let $x$ be an arbitrary variable in $\phi^\iota$. Note that each variable $x$ in $\phi^\iota$ occurs in a term $\iota^{\mathrm{max}_\phi - s_\phi(x)} x$; and conversely, every occurrence of $\iota$ in $\phi^\iota$ is in such a term $\iota^{\mathrm{max}_\phi - s_\phi(x)} x$, for some variable $x$. Therefore, $\phi^{\subseteq^{\mathbf{T}}}$ is an $\mathcal{S}^\mathbf{M}_\mathbf{N}$-formula. So since $\iota^{\mathrm{max}_\phi - s_\phi(x)} : A \rightarrow \mathbf{T}^{\mathrm{max}_\phi - s_\phi(x)} A$ is an isomorphism, for each variable $x : A$ in $\phi^\iota$, we have that $\phi^{\subseteq^{\mathbf{T}}}$ is equivalent to $\phi^\iota$. We record these findings as a lemma:

\begin{lemma}\label{StratConvert}
If $\phi$ is a stratified $\mathcal{S}^\mathbf{M}_{\mathbf{N}, \in}$-formula, then 
$$\mathbf{M} \models \phi \Leftrightarrow \mathbf{M} \models \phi^\iota \Leftrightarrow \mathbf{M} \models \phi^{\subseteq^{\mathbf{T}}},$$ 
where $\phi^\iota$ is an $\mathcal{S}^\mathbf{M}_{\mathbf{N}, \iota}$-formula and $\phi^{\subseteq^{\mathbf{T}}}$ is an $\mathcal{S}^\mathbf{M}_\mathbf{N}$-formula.
\end{lemma}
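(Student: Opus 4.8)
The plan is to read the lemma off Construction \ref{StratConstruction}, splitting the two biconditionals into separate arguments. For the first equivalence $\mathbf{M} \models \phi \Leftrightarrow \mathbf{M} \models \phi^\iota$, I would proceed by induction on the structure of $\phi$. The base case is exactly the content of the two cases in the construction of $\phi^\iota$: each atomic subformula $\theta$ is replaced by an atomic $\mathcal{S}^\mathbf{M}_{\mathbf{N}, \iota}$-formula equivalent to $\theta$ in $\mathbf{M}$ --- when $\theta$ is not a membership atom this equivalence is Lemma \ref{iotaTypeIncrease}, and when $\theta \equiv u(\dots) \in_A v(\dots)$ it is the composite of Proposition \ref{membership as iota subset}, Lemma \ref{iotaTypeIncrease} and iterated application of Lemma \ref{iotaTermConversion}. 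The inductive step is routine: in the categorical semantics the subobject interpreting a compound formula is built by the Heyting operations and the adjoints $\exists, \forall$ applied to the interpretations of its immediate subformulae, so replacing a subformula by one with the same interpreting subobject leaves the whole interpretation unchanged.

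Second, I would verify that $\phi^{\subseteq^{\mathbf{T}}}$ really is an $\mathcal{S}^\mathbf{M}_\mathbf{N}$-formula. This is the bookkeeping heart of the lemma. The claim to establish is the uniform-occurrence property: in $\phi^\iota$ every variable $x$ appears only inside a term $\iota^{\mathrm{max}_\phi - s_\phi(x)} x$, and conversely every occurrence of an $\iota$-symbol is the outermost symbol of such a term. This must be checked against the two construction cases, in which the exponent attached to each variable is dictated by its $s_\phi$-type. Because the minimal stratification $s_\phi$ assigns a single type to each variable, the exponent $\mathrm{max}_\phi - s_\phi(x)$ is the same at every occurrence of $x$; hence substituting one fresh variable $x'$ for each term $\iota^{\mathrm{max}_\phi - s_\phi(x)} x$ is consistent, removes every $\iota$, and leaves a formula over $\mathcal{S}^\mathbf{M}_\mathbf{N}$ alone.

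Third, for $\mathbf{M} \models \phi^\iota \Leftrightarrow \mathbf{M} \models \phi^{\subseteq^{\mathbf{T}}}$ I would invoke change of variables along the isomorphisms $\iota^{\mathrm{max}_\phi - s_\phi(x)} : A \xrightarrow{\sim} \mathbf{T}^{\mathrm{max}_\phi - s_\phi(x)} A$. The relevant semantic fact is that for an isomorphism $g : A \xrightarrow{\sim} A'$, the validity of a formula all of whose occurrences of $x : A$ sit inside $g(x)$ is unchanged by replacing $g(x)$ with a fresh $x' : A'$ and adjusting the corresponding context and quantifier declarations; this holds because $g$ induces an order-isomorphism of subobject lattices under which $g^*$ is invertible with $\exists_g = \forall_g$ as its inverse. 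Applying this simultaneously across all variables, which is legitimate since distinct variables are handled independently, yields the equivalence.

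I expect the second step to be the main obstacle --- not because it is deep, but because it requires a careful syntactic induction over Construction \ref{StratConstruction} to confirm the uniform-occurrence property. It is precisely this property, resting on the single-type assignment of the minimal stratification, that simultaneously makes the substitution well-defined and guarantees that every occurrence of $\iota$ is eliminated. The two semantic equivalences are then essentially formal consequences of results already proved.
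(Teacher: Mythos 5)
Your proposal is correct and follows essentially the same route as the paper: the paper's own (very terse) argument consists of exactly your three ingredients — atomic-level equivalences supplied by Construction \ref{StratConstruction} via Proposition \ref{membership as iota subset} and Lemmata \ref{iotaTermConversion} and \ref{iotaTypeIncrease}, the uniform-occurrence observation that every variable $x$ in $\phi^\iota$ sits inside $\iota^{\mathrm{max}_\phi - s_\phi(x)}x$ and every $\iota$ arises this way, and change of variables along the isomorphisms $\iota^{\mathrm{max}_\phi - s_\phi(x)}$. You merely make explicit (the induction on formula structure, and the role of the single-type assignment of $s_\phi$ in making the substitution well-defined) what the paper leaves implicit.
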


\begin{prop}[Stratified Comprehension]\label{ComprehensionStratified}
For every stratified $\mathcal{S}^\mathbf{M}_{\mathbf{N}, \in}$-formula $\phi(z)$, where $z : Z$ for some $Z$ in $\mathbf M$,
$$\mathbf{M} \models \exists x : \mathbf{P} Z . \forall z : Z . (z \in x \leftrightarrow \phi(z)).$$
\end{prop}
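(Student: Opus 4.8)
The plan is to reduce Stratified Comprehension to the power-object comprehension of Proposition~\ref{ComprehensionN} for $\mathcal{S}^\mathbf{M}_\mathbf{N}$-formulae, using the translation $\phi \mapsto \phi^{\subseteq^{\mathbf{T}}}$ of Lemma~\ref{StratConvert} to eliminate the membership symbols $\in_A$, and Proposition~\ref{membership as iota subset} to pass between $z \in x$ and $\iota z \subseteq^{\mathbf{T}} x$. First I would rewrite the goal: by Proposition~\ref{membership as iota subset}, $\mathbf{M} \models \forall z : Z . \forall x : \mathbf{P}Z . (z \in x \leftrightarrow \iota z \subseteq^{\mathbf{T}} x)$, so it suffices to produce $x : \mathbf{P}Z$ with $\mathbf{M} \models \forall z : Z . (\iota z \subseteq^{\mathbf{T}} x \leftrightarrow \phi(z))$. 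Next, applying Construction~\ref{StratConstruction} and Lemma~\ref{StratConvert} to $\phi(z)$ yields an $\mathcal{S}^\mathbf{M}_\mathbf{N}$-formula $\phi^{\subseteq^{\mathbf{T}}}$, equivalent to $\phi$ in $\mathbf{M}$, whose free variable sits at sort $\mathbf{T}^{k}Z$ where $k = \mathrm{max}_\phi - s_\phi(z)$; more precisely $\mathbf{M} \models \phi(z) \leftrightarrow \phi^{\subseteq^{\mathbf{T}}}(\iota^{k} z)$.

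The type-index $k$ is where the work lies. When $k = 1$ the free variable of $\phi^{\subseteq^{\mathbf{T}}}$ has sort $\mathbf{T}Z$, and Proposition~\ref{ComprehensionN} (with $Y = \mathbf{1}$ and $\phi^{\subseteq^{\mathbf{T}}}$ in place of $\phi$) immediately delivers $x : \mathbf{P}Z$ with $\mathbf{M} \models \forall w : \mathbf{T}Z . (w \subseteq^{\mathbf{T}} x \leftrightarrow \phi^{\subseteq^{\mathbf{T}}}(w))$; instantiating $w = \iota z$ (legitimate since $\iota_Z$ is an isomorphism) and chaining the equivalences finishes this case. To guarantee $k \geq 1$ even when the minimal stratification places $z$ at the top (giving $k = 0$), I would first replace $\phi(z)$ by the equivalent stratified formula $\phi(z) \wedge \forall q : \mathbf{P}Z . (z \in_Z q \rightarrow q = q)$: the extra conjunct is valid in $\mathbf{M}$, yet its subterm $q$ is forced to type $s_\phi(z) + 1$, raising $\mathrm{max}_\phi$ and hence $k$ to at least $1$.

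The genuinely delicate case is $k \geq 2$, which does occur (e.g.\ for $\phi(z) \equiv \exists p \exists q . (z \in p \wedge p \in q)$) and cannot be reduced away, since the sort $\mathbf{T}^{k}Z$ of the converted free variable is forced by $\phi$. Here I would apply Proposition~\ref{ComprehensionN} with base object $W = \mathbf{T}^{k-1}Z$ (so that $\mathbf{T}W = \mathbf{T}^{k}Z$ matches the free variable), obtaining $x_0 : \mathbf{P}W$ with $\mathbf{M} \models \forall w : \mathbf{T}W . (w \subseteq^{\mathbf{T}} x_0 \leftrightarrow \phi^{\subseteq^{\mathbf{T}}}(w))$, and then transport $x_0$ along an isomorphism $\nu : \mathbf{P}W \xrightarrow{\sim} \mathbf{P}Z$ to obtain the witness $x = \nu(x_0)$. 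The crux is to build $\nu$ so that it respects membership, i.e.\ so that $\mathbf{M} \models w \subseteq^{\mathbf{T}}_W x_0 \leftrightarrow (\mathbf{T}\beta)(w) \subseteq^{\mathbf{T}}_Z \nu(x_0)$, where $\beta = (\iota^{k-1}_Z)^{-1} : W \xrightarrow{\sim} Z$. I would obtain $\nu$ from the uniqueness clause of the power-object property (Proposition~\ref{strenghten def}(\ref{strengthen power})): the base isomorphism $\mathbf{T}\beta : \mathbf{T}W \xrightarrow{\sim} \mathbf{T}Z$ identifies the two power objects $(\mathbf{P}W, \subseteq^{\mathbf{T}}_W)$ and $(\mathbf{P}Z, \subseteq^{\mathbf{T}}_Z)$ of isomorphic objects, while naturality of $\iota$ gives $(\mathbf{T}\beta)(\iota^{k} z) = \iota_Z z$, so the membership correspondence pulls back to exactly $z \in x \leftrightarrow \phi(z)$.

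The subtle point I expect to be the main obstacle is that morphisms built from $\iota$ (such as $\mathbf{T}\beta$) live only in $\mathbf{M}$, whereas (\ref{PT}) classifies monos lying in $\mathbf{N}$; so to realize $\nu$ as a legitimate transport I would factor it through the canonical $\mathbf{N}$-isomorphisms of Lemma~\ref{TandProductsCommute} together with the natural isomorphism $\mu : \mathbf{P}\circ\mathbf{T} \xrightarrow{\sim} \mathbf{T}\circ\mathbf{P}$, reserving the $\mathbf{M}$-only isomorphism $\iota^{k-1}$ for the final identification $\mathbf{T}^{k-1}\mathbf{P}Z \cong \mathbf{P}Z$. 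Verifying that this $\nu$ does intertwine $\subseteq^{\mathbf{T}}_W$ and $\subseteq^{\mathbf{T}}_Z$ — equivalently, that $\mathbf{T}$ carries the power-object structure of $\mathbf{T}A$ to that of $\mathbf{T}^2 A$ compatibly with $\mu$ — is the heart of the argument, and I would establish it by induction on $k$, invoking the uniqueness in (\ref{PT}) at each step.
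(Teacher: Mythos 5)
Your proposal is correct and follows essentially the same route as the paper's proof: convert $\phi$ to an $\mathcal{S}^\mathbf{M}_\mathbf{N}$-formula via Lemma \ref{StratConvert}, commute $\mathbf{P}$ past the accumulated $\mathbf{T}$'s using the $\mu$-built isomorphism $\mathbf{P}\mathbf{T}^{j}Z \cong \mathbf{T}^{j}\mathbf{P}Z$, and invoke Proposition \ref{ComprehensionN} at the raised type. The only differences are bookkeeping: the paper applies the stratified conversion to the whole formula $\exists x \, \forall z\,(z \in x \leftrightarrow \phi)$, so the atom $z \in x$ automatically forces the type gap you secure with the dummy conjunct, and the transport of the witness that you perform by hand is absorbed into the equivalence $(\dagger)$.
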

\begin{proof}
By Lemma \ref{StratConvert}, we have
\begin{align*}
\mathbf{M} \models \hspace{1pt} & \exists x : \mathbf{P} Z . \forall z : Z . (z \in_Z x \leftrightarrow \phi(z, y)) \\
\iff \mathbf{M} \models \hspace{1pt} & \exists x\hspace{1pt}' : \mathbf{T}^k \mathbf{P} Z . \forall z\hspace{1pt}' : \mathbf{T}^{k+1} Z . (z\hspace{1pt}' (\hat{\mathbf{T}}^k \subseteq^{\mathbf{T}}_Z) x\hspace{1pt}' \leftrightarrow \phi^{\subseteq^{\mathbf{T}}}(z\hspace{1pt}')), \tag{$\dagger$}
\end{align*}
for some $k \in \mathbb{N}$, where $x\hspace{1pt}', z\hspace{1pt}'$ are fresh variables. 

In order to apply Proposition \ref{ComprehensionN}, we need to move the $\mathbf{T}$:s through the $\mathbf{P}$ and transform the $\hat{\mathbf{T}}^k \subseteq^{\mathbf{T}}_Z$ into a $\subseteq^{\mathbf{T}}_{\mathbf{T}^k Z}$. Since $\mu : \mathbf{P}\mathbf{T} \rightarrow \mathbf{T}\mathbf{P}$ is a natural isomorphism,
$$\nu =_\mathrm{df} \mathbf{T}^{k-1}(\mu_Z) \circ \mathbf{T}^{k-2}(\mu_{\mathbf{T}Z}) \circ \dots \circ \mathbf{T}(\mu_{\mathbf{T}^{k-2} Z}) \circ \mu_{\mathbf{T}^{k-1} Z} : \mathbf{P} \mathbf{T}^k Z \xrightarrow{\sim} \mathbf{T}^k \mathbf{P} Z,$$
is an isomorphism making this diagram commute:
\[
\begin{tikzcd}[ampersand replacement=\&, column sep=small]
\subseteq^{\mathbf{T}}_{\mathbf{T}^k Z} \ar[rr, "{\sim}"] \ar[d, rightarrowtail, "{m_{\subseteq^{\mathbf{T}}_{\mathbf{T}^k Z}}}"] \&\& \hat{\mathbf{T}}^k \subseteq^{\mathbf{T}}_Z \ar[d, rightarrowtail, "{\hat{\mathbf{T}}^k m_{\subseteq^{\mathbf{T}}_Z}}"] \\
\mathbf{T}^{k+1} Z \times \mathbf{P} \mathbf{T}^k Z \ar[rr, "{\sim}", "{\id \times \nu}"'] \&\& \mathbf{T}^{k+1} Z \times \mathbf{T}^k \mathbf{P} Z
\end{tikzcd} 
\]
So introducing a fresh variable $x\hspace{1pt}'' : \mathbf{P} \mathbf{T}^k Z$, $(\dagger)$ is equivalent to
$$\mathbf{M} \models \exists x\hspace{1pt}'' : \mathbf{P} \mathbf{T}^k Z . \forall z\hspace{1pt}' : \mathbf{T}^{k+1} Z . (z\hspace{1pt}' \subseteq^{\mathbf{T}}_{\mathbf{T}^k  Z} x\hspace{1pt}'' \leftrightarrow \phi^{\subseteq^{\mathbf{T}}}(z\hspace{1pt}')).$$
By Proposition \ref{ComprehensionN} we are done.
\end{proof}

\begin{cor}
$\mathcal{U} \models \mathrm{SC}_S$
\end{cor}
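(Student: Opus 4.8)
The plan is to reduce $\mathcal{U} \models \mathrm{SC}_S$ to Proposition \ref{ComprehensionStratified} by translating away the membership symbol $\epsin$ of $\mathcal{L}_\mathsf{Set}$ (interpreted by $m_{\epsin}$, a relation of sort $U \times U$ that lies in $\mathcal{S}^\mathbf{M}_\mathbf{M}$ but \emph{not} in $\mathcal{S}^\mathbf{M}_{\mathbf{N}, \in}$) in favour of the relation $\in_U$ of sort $U \times \mathbf{P}U$ together with the sethood mono $m_S$. Let $\phi(z)$ be any stratified $\mathcal{L}_\mathsf{Set}$-formula (possibly with further free variables, which the categorical semantics treats as parameters, the asserted validity being that of the universal closure). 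Unwinding the abbreviation $\setmany$, the goal is
\[
\mathbf{M} \models \exists x : U . \big( S(x) \wedge \forall z : U . (z \epsin x \leftrightarrow \phi(z)) \big).
\]

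First I would perform the reduction to a statement about $\in_U$ and $\mathbf{P}U$. Since $m_S : \mathbf{P}U \rightarrowtail U$ is monic, $S(x)$ holds exactly when $x$ factors through $m_S$, so in the internal logic $S(x) \leftrightarrow \exists a : \mathbf{P}U . m_S(a) = x$; eliminating the bound variable $x$ in favour of $m_S(a)$ converts the goal into $\mathbf{M} \models \exists a : \mathbf{P}U . \forall z : U . (z \epsin m_S(a) \leftrightarrow \phi(z))$. By the construction $m_{\epsin} = (\id_U \times m_S) \circ m_{\in_U}$ of Construction \ref{StructureConstruction}, pulling back $m_{\epsin}$ along the mono $\id_U \times m_S$ returns $m_{\in_U}$, so $\llbracket z : U, a : \mathbf{P}U \mid z \epsin m_S(a) \rrbracket \cong \llbracket z : U, a : \mathbf{P}U \mid z \in_U a \rrbracket$; hence $z \epsin m_S(a) \leftrightarrow z \in_U a$ in $\mathbf{M}$. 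Thus the goal becomes $\mathbf{M} \models \exists a : \mathbf{P}U . \forall z : U . (z \in_U a \leftrightarrow \phi(z))$.

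The main step is to rewrite $\phi$ as a \emph{stratified} $\mathcal{S}^\mathbf{M}_{\mathbf{N}, \in}$-formula. Let $\phi^*$ be obtained from $\phi$ by interpreting the pairing function by the $\mathcal{S}^\mathbf{M}_\mathbf{N}$-symbol $m_P$ (so that $\mathcal{L}_\mathsf{Set}$-terms become $\mathcal{S}^\mathbf{M}_\mathbf{N}$-terms) and by replacing every atomic subformula $t_1 \epsin t_2$ with
\[
\exists w : \mathbf{P}U . \big( m_S(w) = t_2 \wedge t_1 \in_U w \big),
\]
where $w$ is chosen fresh for each occurrence. The same computation as above gives $\mathbf{M} \models \phi \leftrightarrow \phi^*$. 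The key point is that $\phi^*$ is stratified: fixing a stratification $s$ of $\phi$, extend it by setting $s(w) = s(t_2)$ for each new variable $w$. The equation $m_S(w) = t_2$ is then consistent, since $m_S$ is a type-preserving function symbol, forcing $s(w) = s(m_S(w)) = s(t_2)$ (clauses (i) and the equality clause of Definition \ref{MoreSignatures}); and $t_1 \in_U w$ is consistent by the membership clause (iii), which demands $s(t_1) + 1 = s(w)$, holding because the original stratification of $t_1 \epsin t_2$ gives $s(t_1) + 1 = s(t_2) = s(w)$. That $m_P$ respects clause (i) matches exactly the type-level constraint (i) on $\langle -, - \rangle$ in Definition \ref{DefStrat}. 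Hence $\phi^*$ is a stratified $\mathcal{S}^\mathbf{M}_{\mathbf{N}, \in}$-formula.

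Finally I would apply Proposition \ref{ComprehensionStratified} with $Z = U$ (noting $U$ is an object of $\mathbf{N}$) to $\phi^*$, obtaining $\mathbf{M} \models \exists a : \mathbf{P}U . \forall z : U . (z \in_U a \leftrightarrow \phi^*(z))$; combined with $\mathbf{M} \models \phi \leftrightarrow \phi^*$ and the reduction above, this yields $\mathcal{U} \models \setmany z . \phi(z)$ for every stratified $\phi$, i.e. $\mathcal{U} \models \mathrm{SC}_S$. I expect the only delicate point to be the bookkeeping in the stratification check for $\phi^*$ --- in particular verifying that the freshly introduced witnesses $w$ can always be typed consistently with the existing stratification --- but this is forced locally at each atomic subformula and so presents no real obstruction.
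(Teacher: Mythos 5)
Your proposal is correct and follows essentially the same route as the paper: apply Proposition \ref{ComprehensionStratified} with $Z = U$ and then use $m_{\epsin} = (\id_U \times m_S) \circ m_{\in_U}$ to convert between $\in_U$ on $U \times \mathbf{P}U$ and $\epsin$ on $U \times U$ via the sethood mono. In fact you make explicit a step the paper leaves implicit --- namely that a stratified $\mathcal{L}_\mathsf{Set}$-formula must first be rewritten (stratification being preserved) as a stratified $\mathcal{S}^\mathbf{M}_{\mathbf{N},\in}$-formula before Proposition \ref{ComprehensionStratified} applies, since $\epsin$ itself is not a symbol of that signature --- and your local replacement of $t_1 \epsin t_2$ by $\exists w : \mathbf{P}U . (m_S(w) = t_2 \wedge t_1 \in_U w)$ with $s(w) = s(t_2)$ does this correctly.
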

\begin{proof}
Let $\phi(z)$ be a stratified formula in $\mathcal{L}_\mathsf{Set}$. By Proposition \ref{ComprehensionStratified}, 
\[\mathbf{M} \models \exists x : \mathbf{P} U . \forall z : U . (z \in x \leftrightarrow \phi(z)).
\] 
Now,
$$
\begin{array}{rl}
& \llbracket \exists x : \mathbf{P} U . \forall z : U . (z \in x \leftrightarrow \phi(z)) \rrbracket \\
= & \llbracket \exists x : \mathbf{P} U . \forall z : U . (z \epsin m_S(x) \leftrightarrow \phi(z)) \rrbracket \\
= & \llbracket \exists x\hspace{1pt}' : U . \big(S(x\hspace{1pt}') \wedge \forall z : U . (z \epsin x\hspace{1pt}' \leftrightarrow \phi(z))\big) \rrbracket. \\
\end{array}
$$
So $\mathcal{U} \models \mathrm{SC}_S$.
\end{proof}

\begin{thm}\label{ConCatConSet}
$\mathcal{U} \models \mathrm{(I)NFU}$, and if $U \cong \mathbf{P}U$, then $\mathcal{U} \models \mathrm{(I)NF}$. Thus, each of $\mathrm{(I)NF(U)}_\mathsf{Set}$ is interpretable in $\mathrm{(I)ML(U)}_\mathsf{Cat}$, respectively.
\end{thm}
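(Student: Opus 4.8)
The plan is to assemble the theorem from the propositions and corollaries already in hand, and then to upgrade the semantic statement $\mathcal{U} \models \mathrm{(I)NFU}$ to a genuine first-order interpretation by means of the soundness half of Theorem \ref{Completeness}. First I would observe that the structure $\mathcal{U}$ of Construction \ref{StructureConstruction} validates each of the four axioms listed in Axioms \ref{SCAx}: Extensionality for sets is Corollary \ref{ExtCor}, Stratified Comprehension is the corollary immediately following Proposition \ref{ComprehensionStratified}, the Ordered Pair axiom is Proposition \ref{OrderedPair}, and Sethood is Proposition \ref{Sethood}. Since $\mathrm{(I)NFU}_\mathsf{Set}$ is axiomatized as exactly $\mathrm{Ext}_S + \mathrm{SC}_S + \mathrm{P} + \mathrm{Sethood}$, combining these four facts gives $\mathcal{U} \models \mathrm{(I)NFU}$ immediately.

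For the second clause I would invoke Lemma \ref{SforNF}: under the hypothesis $U \cong \mathbf{P}U$ one is free to take the interpretation $m_S$ of the sethood predicate to be an isomorphism, and the lemma then yields $\mathcal{U} \models \forall x . S(x)$. Together with the previous paragraph and the axiomatization of $\mathrm{(I)NF}_\mathsf{Set}$ as $\mathrm{(I)NFU}_\mathsf{Set} + \forall x . S(x)$, this gives $\mathcal{U} \models \mathrm{(I)NF}$.

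Finally, to pass from ``$\mathcal{U}$ satisfies the axioms'' to ``$\mathrm{(I)NF(U)}_\mathsf{Set}$ is interpretable in $\mathrm{(I)ML(U)}_\mathsf{Cat}$'', I would note that the categorical semantics of Section \ref{CatAxioms} supplies, for each $\mathcal{L}_\mathsf{Set}$-sentence $\phi$, the $\mathcal{L}_\mathsf{Cat}$-statement that $\llbracket \phi \rrbracket^{\mathcal{U}}$ equals the terminal object $\mathbf{1}$; since the construction of $\mathcal{U}$ from the data $U, \mathbf{T}, \iota, \mathbf{P}, \mu, \subseteq^{\mathbf{T}}$ of Definition \ref{IMLU_CAT} was carried out uniformly inside an arbitrary $\mathrm{IMLU}$-category, this translation is definable in $\mathcal{L}_\mathsf{Cat}$ and $\mathrm{IMLU}_\mathsf{Cat}$ proves the translate of each axiom of $\mathrm{INFU}_\mathsf{Set}$. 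The soundness direction of Theorem \ref{Completeness} then guarantees that intuitionistic first-order derivability is preserved by the semantics, so $\mathrm{IMLU}_\mathsf{Cat}$ proves the translate of every theorem of $\mathrm{INFU}_\mathsf{Set}$, which is exactly what interpretability requires. Reading ``Boolean'' for ``Heyting'' throughout, and using the classical soundness in Theorem \ref{Completeness}, covers the classical theories; and the hypothesis $U \cong \mathbf{P}U$, which is precisely the extra axiom separating $\mathrm{(I)ML}_\mathsf{Cat}$ from $\mathrm{(I)MLU}_\mathsf{Cat}$, covers the versions without atoms. Hence all four cases follow simultaneously.

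Because the genuinely difficult content, namely Stratified Comprehension in Proposition \ref{ComprehensionStratified}, is already established, the theorem is essentially an act of bookkeeping and I expect no serious obstacle. The one point that deserves care is the last step: verifying that the assignment $\phi \mapsto ( \llbracket \phi \rrbracket^{\mathcal{U}} = \mathbf{1} )$ is a Tarski-style interpretation rather than a mere satisfaction fact about one fixed model, that is, that it is uniformly $\mathcal{L}_\mathsf{Cat}$-definable and commutes with the connectives and quantifiers. This is underwritten by the recursive clauses defining $\llbracket - \rrbracket$ in Section \ref{CatAxioms} together with the soundness theorem.
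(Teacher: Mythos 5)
Your proposal is correct and follows essentially the same route as the paper: the paper's proof likewise just collects Propositions \ref{Sethood} and \ref{OrderedPair}, Corollary \ref{ExtCor}, and the corollary to Proposition \ref{ComprehensionStratified} to conclude $\mathcal{U} \models \mathrm{(I)NFU}$, and then invokes Lemma \ref{SforNF} for the $\mathrm{(I)NF}$ case. Your additional paragraph spelling out why satisfaction of the axioms in the uniformly constructed $\mathcal{U}$, together with soundness (Theorem \ref{Completeness}), yields genuine interpretability is a careful elaboration of a step the paper leaves implicit, but it is not a different approach.
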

\begin{proof}
The cases of $\mathrm{(I)NFU}$ are settled by the results above on Sethood, Ordered Pair, Extensionality and Stratified Comprehension. The cases of $\mathrm{(I)NF}$ now follow from Lemma \ref{SforNF}.
\end{proof}

\begin{thm}
These theories are equiconsistent\footnote{But see Remark \ref{remML}.}: 
\begin{align*}
& \mathrm{(I)ML(U)_\mathsf{Class}} \\
& \mathrm{(I)ML(U)_\mathsf{Cat}} \\
& \mathrm{(I)NF(U)_\mathsf{Set}}
\end{align*}

More precisely, $\mathrm{(I)ML(U)_\mathsf{Class}}$ interprets $\mathrm{(I)ML(U)_\mathsf{Cat}}$, which in turn interprets $\mathrm{(I)NF(U)_\mathsf{Set}}$; and a model of $\mathrm{(I)ML(U)_\mathsf{Class}}$ can be constructed from a model of $\mathrm{(I)NF(U)_\mathsf{Set}}$.
\end{thm}
\begin{proof}
Combine Theorem \ref{ToposModelOfML}, Corollary \ref{ConSetConClass}, Theorem \ref{ConClassConCat} and Theorem \ref{ConCatConSet}.
\end{proof}

\begin{cor}\label{ConFewAtoms}
These theories are equiconsistent\footnote{But see Remark \ref{remML}.}:
\begin{align*}
& \mathrm{(I)NF}_\mathsf{Set} \\
& \mathrm{(I)NFU_\mathsf{Set}} + ( |V| = |\mathcal P(V)| )
\end{align*}
\end{cor}
\begin{proof}
Only $\Leftarrow$ is non-trivial. By the proofs above, 
\[
\begin{array}{cl}
& \mathrm{Con}\big(\mathrm{(I)NFU_\mathsf{Set}} + ( |V| = |\mathcal P(V)| )\big) \\
\Rightarrow & \mathrm{Con}\big(\mathrm{(I)MLU_\mathsf{Class}} + ( |V| = |\mathcal P(V)| )\big) \\
\Rightarrow & \mathrm{Con}\big(\mathrm{(I)ML_\mathsf{Cat}}\big) \Rightarrow \mathrm{Con}\big(\mathrm{(I)NF}_\mathsf{Set}\big),
\end{array}
\]
as desired.
\end{proof}

For the classical case, this is known from \cite{Cra00}, while the intuitionistic case appears to be new.

\section{The subtopos of strongly Cantorian objects}\label{subtopos}

\begin{dfn}
An object $X$ in $\mathbf{N}$ is {\em Cantorian} if $X \cong \mathbf{T}X$ in $\mathbf{N}$, and is {\em strongly Cantorian} if $\iota_X : X \xrightarrow{\sim} \mathbf{T}X$ is an isomorphism in $\mathbf{N}$. Define $\mathbf{SCan}_{( \mathbf{M}, \mathbf{N})}$ as the full subcategory of $\mathbf{N}$ on the set of strongly Cantorian objects. I.e. its objects are the strongly Cantorian objects, and its morphism are all the morphisms in $\mathbf{N}$ between such objects. When the subscript ${( \mathbf{M}, \mathbf{N})}$ is clear from the context, we may simply write $\mathbf{SCan}$.
\end{dfn}

\begin{prop}\label{SCan has limits}
$\mathbf{SCan}_{( \mathbf{M}, \mathbf{N})}$ has finite limits.
\end{prop}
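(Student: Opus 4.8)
The plan is to show that $\mathbf{SCan}_{(\mathbf{M}, \mathbf{N})}$ is closed under finite limits as computed in $\mathbf{N}$; since it is a \emph{full} subcategory of $\mathbf{N}$, any such limit is then automatically a limit in $\mathbf{SCan}$, and the claim follows. The four ingredients I would rely on are: $\mathbf{N}$ has finite limits (being Heyting); $\mathbf{T} : \mathbf{M} \rightarrow \mathbf{M}$ preserves all finite limits, since it is naturally isomorphic to the identity (Proposition \ref{strenghten def}(\ref{strengthen T heyting})), and its restriction preserves finite limits in $\mathbf{N}$ (Proposition \ref{strenghten def}(\ref{strengthen T limits})); $\iota : \id_{\mathbf{M}} \xrightarrow{\sim} \mathbf{T}$ is a natural isomorphism on $\mathbf{M}$; and $\mathbf{N}$ is a conservative Heyting subcategory of $\mathbf{M}$.

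Rather than handling the terminal object, binary products, and equalizers one at a time, I would prove a single uniform closure statement. Let $D : \mathbf{I} \rightarrow \mathbf{SCan}$ be a finite diagram, regard it as a diagram in $\mathbf{N}$, and let $L$ with cone $(\lambda_i : L \rightarrow D_i)_i$ be its limit in $\mathbf{N}$. Because $\mathbf{N}$ is a Heyting subcategory, $L$ is also a limit of $D$ in $\mathbf{M}$; and since $\mathbf{T}$ preserves limits in $\mathbf{M}$, the object $\mathbf{T}L$ with cone $(\mathbf{T}\lambda_i)_i$ is a limit of $\mathbf{T} \circ D$ in $\mathbf{M}$, and by Proposition \ref{strenghten def}(\ref{strengthen T limits}) also in $\mathbf{N}$. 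Now I would use that each $D_i$ is strongly Cantorian, i.e.\ that $\iota_{D_i}$ is an isomorphism in $\mathbf{N}$. Naturality of $\iota$ together with the cone condition for $(\lambda_i)$ shows that $(\iota_{D_i} \circ \lambda_i : L \rightarrow \mathbf{T}D_i)_i$ is a cone on $\mathbf{T} \circ D$. Since $\mathbf{T}L$ is a limit of $\mathbf{T} \circ D$ in $\mathbf{N}$, this cone factors through a unique morphism $u : L \rightarrow \mathbf{T}L$ in $\mathbf{N}$.

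The key identification comes next: naturality of $\iota$ on $\mathbf{M}$ gives $\mathbf{T}\lambda_i \circ \iota_L = \iota_{D_i} \circ \lambda_i$ for every $i$, so $\iota_L$ is also a factorization of the very same cone through $\mathbf{T}L$, this time in $\mathbf{M}$. By uniqueness of the factorization in $\mathbf{M}$ (where $\mathbf{T}L$ is a limit and both $u$ and $\iota_L$ are $\mathbf{M}$-morphisms), we conclude $u = \iota_L$. Hence $\iota_L$ is a morphism of $\mathbf{N}$; being an isomorphism in $\mathbf{M}$ and $\mathbf{N}$ conservative, it is an isomorphism in $\mathbf{N}$. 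Thus $L$ is strongly Cantorian, i.e.\ an object of $\mathbf{SCan}$, and the cone $(\lambda_i)$ consists of morphisms between strongly Cantorian objects, hence lies in $\mathbf{SCan}$; fullness then transports the universal property from $\mathbf{N}$ to $\mathbf{SCan}$. As this applies to every finite diagram (the empty diagram gives the terminal object, recovering the fact $\iota_{\mathbf{1}}$ is an isomorphism from Lemma \ref{TandProductsCommute}(a)), $\mathbf{SCan}$ has finite limits.

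The one delicate point I would be careful about — and the only real obstacle — is the bookkeeping between limits computed in $\mathbf{N}$ and in $\mathbf{M}$: the factorizing morphism $u$ is produced by the universal property \emph{in $\mathbf{N}$}, whereas the identification $u = \iota_L$ must be argued \emph{in $\mathbf{M}$}, where $\iota_L$ lives a priori. Once this is set up correctly, everything else is a routine diagram chase using the naturality of $\iota$ and the conservativity of the inclusion.
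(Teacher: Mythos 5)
Your proposal is correct and follows essentially the same route as the paper: take the limit $L$ in $\mathbf{N}$, observe it is also a limit in $\mathbf{M}$, use that $\mathbf{T}L$ is a limit of $\mathbf{T}\circ\mathbf{D}$ (hence of $\mathbf{D}$, the two diagrams being isomorphic in $\mathbf{N}$ since each $\iota_{D_i}$ is an isomorphism there), identify the comparison morphism with $\iota_L$ by uniqueness in $\mathbf{M}$, and conclude by fullness. The only cosmetic difference is that the paper obtains both $\iota_L$ and $\iota_L^{-1}$ as the unique comparison morphisms in $\mathbf{N}$, whereas you obtain only $\iota_L$ and then invoke conservativity of the inclusion to get that it is an isomorphism in $\mathbf{N}$; both are fine.
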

\begin{proof}
Let $L$ be a limit in $\mathbf{N}$ of a finite diagram $\mathbf{D} : \mathbf{I} \rightarrow \mathbf{SCan}$. Since $\mathbf{N}$ is a Heyting subcategory of $\mathbf{M}$, $L$ is a limit of $\mathbf{D}$ in $\mathbf{M}$; and since $\mathbf{T}$ preserves limits, $\mathbf{T}L$ is a limit of $\mathbf{T} \circ \mathbf{D}$ in $\mathbf{M}$ and in $\mathbf{N}$. But also, since $\mathbf{D}$ is a diagram in $\mathbf{SCan}$, $\mathbf{T}L$ is a limit of $\mathbf{D}$, and $L$ is a limit of $\mathbf{T} \circ \mathbf{D}$, in $\mathbf{M}$ and in $\mathbf{N}$. So there are unique morphisms in $\mathbf{N}$ back and forth between $L$ and $\mathbf{T}L$ witnessing the universal property of limits. Considering these as morphisms in $\mathbf{M}$ we see that they must be $\iota_L$ and $\iota_L^{-1}$. Now $L$ is a limit in $\mathbf{SCan}$ by fullness.
\end{proof}

Before we can show that $\mathbf{SCan}_{( \mathbf{M}, \mathbf{N})}$ has power objects, we need to establish results showing that $\mathbf{SCan}_{( \mathbf{M}, \mathbf{N})}$ is a ``nice'' subcategory of $\mathbf{N}$.

\begin{cor}\label{SCan pres and refl limits}
The inclusion functor of $\mathbf{SCan}_{( \mathbf{M}, \mathbf{N})}$ into $\mathbf{N}$ preserves and reflects finite limits.
\end{cor}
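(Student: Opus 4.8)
The plan is to derive this corollary directly from the proof of Proposition \ref{SCan has limits}, exploiting the fact that $\mathbf{SCan}_{( \mathbf{M}, \mathbf{N})}$ is a \emph{full} subcategory of $\mathbf{N}$ together with the key fact, already extracted in that proof, that a limit computed in $\mathbf{N}$ of a finite diagram landing in $\mathbf{SCan}$ is itself an object of $\mathbf{SCan}$ and serves as a limit there. Nothing new about strong Cantorianness needs to be proved; the work is bookkeeping about cones, fullness, and uniqueness up to isomorphism.

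First I would handle reflection, since it follows almost formally from fullness. Let $\mathbf{D} : \mathbf{I} \rightarrow \mathbf{SCan}$ be a finite diagram and let $\Lambda$ be a cone over $\mathbf{D}$ in $\mathbf{SCan}$, with apex $A$ say, whose image in $\mathbf{N}$ is a limit of $\mathbf{D}$ there. To see that $\Lambda$ is a limit in $\mathbf{SCan}$, take any cone $\Gamma$ over $\mathbf{D}$ in $\mathbf{SCan}$, with apex $B$. Viewing $\Gamma$ as a cone in $\mathbf{N}$ and using that $\Lambda$ is a limit in $\mathbf{N}$, there is a unique mediating morphism $h : B \rightarrow A$ in $\mathbf{N}$. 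Since $A$ and $B$ are objects of $\mathbf{SCan}$ and $\mathbf{SCan}$ is full in $\mathbf{N}$, the morphism $h$ already lies in $\mathbf{SCan}$; and any $\mathbf{SCan}$-mediating morphism is in particular an $\mathbf{N}$-mediating morphism, so it must equal $h$. Hence $\Lambda$ is a limit in $\mathbf{SCan}$, establishing reflection.

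Next I would treat preservation. Let $\Lambda$ be a limit cone over $\mathbf{D}$ in $\mathbf{SCan}$. By the proof of Proposition \ref{SCan has limits}, a limit $L$ of $\mathbf{D}$ in $\mathbf{N}$ is strongly Cantorian (its comparison morphism being $\iota_L$), so $L$ lies in $\mathbf{SCan}$ and, by fullness, the corresponding cone is also a limit in $\mathbf{SCan}$. Since limits in $\mathbf{SCan}$ are unique up to a unique isomorphism of cones, $\Lambda$ and this cone $L$ are isomorphic as cones in $\mathbf{SCan}$, hence (applying the inclusion) isomorphic as cones in $\mathbf{N}$. As $L$ is a limit in $\mathbf{N}$ and the limit property is invariant under isomorphism of cones, $\Lambda$ too is a limit in $\mathbf{N}$; this is preservation. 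The only point requiring care is not to conflate the two directions: reflection rests on fullness alone, whereas preservation additionally requires the closure fact that $\mathbf{N}$-limits of $\mathbf{SCan}$-diagrams remain in $\mathbf{SCan}$, which is precisely what Proposition \ref{SCan has limits} supplies. I expect no substantive obstacle, as both arguments are standard facts about reflective/closed full subcategories once that closure fact is in hand.
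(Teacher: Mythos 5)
Your proposal is correct and follows essentially the same route as the paper: reflection via fullness of $\mathbf{SCan}$ in $\mathbf{N}$, and preservation by comparing a given $\mathbf{SCan}$-limit with an $\mathbf{N}$-limit, which Proposition \ref{SCan has limits} shows already lies in $\mathbf{SCan}$ and is a limit there, then transporting along the resulting isomorphism of cones. The paper compresses the reflection direction to ``repeat the proof of Proposition \ref{SCan has limits}''; you have merely unpacked that step, correctly isolating fullness as the only ingredient it needs.
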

\begin{proof}
To see that it reflects finite limits, simply repeat the proof of Proposition \ref{SCan has limits}. We proceed to show that it preserves finite limits.

Let $L$ be a limit in $\mathbf{SCan}$ of a finite diagram $\mathbf{D} : \mathbf{I} \rightarrow \mathbf{SCan}$. Let $L'$ be a limit of this diagram in $\mathbf{N}$. By the proof of Proposition \ref{SCan has limits}, $L'$ is also such a limit in $\mathbf{SCan}$, whence $L$ is isomorphic to $L'$ in $\mathbf{SCan}$, and in $\mathbf{N}$. So $L$ is a limit of $\mathbf{D}$ in $\mathbf{N}$ as well.
\end{proof}

\begin{cor}\label{SCan mono iff N mono}
$m : A \rightarrow B$ is monic in $\mathbf{SCan}_{( \mathbf{M}, \mathbf{N})}$ iff it is monic in $\mathbf{N}$.
\end{cor}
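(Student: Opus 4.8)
The plan is to reduce monicity to a single finite-limit condition and then invoke the preservation and reflection of finite limits already established in Corollary \ref{SCan pres and refl limits}. Recall the standard characterization: a morphism $m : A \rightarrow B$ is monic precisely when the cone consisting of $A$ together with the pair $\id_A, \id_A : A \rightarrow A$ is a pullback of the cospan $A \xrightarrow{m} B \xleftarrow{m} A$. This is a purely finite-limit condition, so it is meaningful and correct in any category; in particular it applies verbatim both in $\mathbf{SCan}_{( \mathbf{M}, \mathbf{N})}$ and in $\mathbf{N}$.

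First I would observe that, because $A$ and $B$ are strongly Cantorian and $m$ is a morphism of $\mathbf{SCan}_{( \mathbf{M}, \mathbf{N})}$, the whole cospan together with the candidate cone $(A, \id_A, \id_A)$ lies entirely inside $\mathbf{SCan}_{( \mathbf{M}, \mathbf{N})}$. This is exactly the hypothesis needed to apply reflection, since the definition of reflecting finite limits requires the diagram and the cone to live in the subcategory to begin with.

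Then the equivalence follows by chaining biconditionals. By the characterization, $m$ is monic in $\mathbf{SCan}_{( \mathbf{M}, \mathbf{N})}$ iff $(A, \id_A, \id_A)$ is a pullback of $(m,m)$ in $\mathbf{SCan}_{( \mathbf{M}, \mathbf{N})}$. Since the inclusion functor into $\mathbf{N}$ preserves finite limits (Corollary \ref{SCan pres and refl limits}), such a pullback in $\mathbf{SCan}_{( \mathbf{M}, \mathbf{N})}$ is carried to a pullback in $\mathbf{N}$; conversely, since the same inclusion reflects finite limits and the cone already lies in $\mathbf{SCan}_{( \mathbf{M}, \mathbf{N})}$, any pullback in $\mathbf{N}$ of this cone is a pullback in $\mathbf{SCan}_{( \mathbf{M}, \mathbf{N})}$. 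Hence $(A, \id_A, \id_A)$ is a pullback in $\mathbf{SCan}_{( \mathbf{M}, \mathbf{N})}$ iff it is a pullback in $\mathbf{N}$, and applying the characterization once more in $\mathbf{N}$ yields that this holds iff $m$ is monic in $\mathbf{N}$.

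There is essentially no serious obstacle here, as the substantive work was already done in Corollary \ref{SCan pres and refl limits}; the only point requiring a moment's care is the bookkeeping just noted, namely that the diagonal cone witnessing monicity genuinely lies in the full subcategory so that both preservation and reflection apply. Alternatively, one could dispatch the direction ``monic in $\mathbf{N}$ implies monic in $\mathbf{SCan}_{( \mathbf{M}, \mathbf{N})}$'' more cheaply, since monos are always inherited by a subcategory (there are fewer test objects), and use the limit machinery only for the converse ``monic in $\mathbf{SCan}_{( \mathbf{M}, \mathbf{N})}$ implies monic in $\mathbf{N}$'', which is where preservation of finite limits is genuinely needed.
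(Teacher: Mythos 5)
Your proof is correct and follows essentially the same route as the paper: the paper likewise handles the direction from $\mathbf{N}$ to $\mathbf{SCan}_{(\mathbf{M},\mathbf{N})}$ by the cheap observation that monos are inherited by subcategories, and handles the converse by noting that the cone $(A,\id_A,\id_A)$ is a pullback of $(m,m)$ and invoking the preservation of finite limits from Corollary \ref{SCan pres and refl limits}. Your additional symmetric chain of biconditionals via both preservation and reflection is a harmless elaboration of the same idea.
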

\begin{proof}
($\Leftarrow$) follows from that $\mathbf{SCan}$ is a subcategory of $\mathbf{N}$.

($\Rightarrow$) Assume that $m : A \rightarrow B$ is monic in $\mathbf{SCan}$. Then $A$ along with $\id_A : A \rightarrow A$ and $\id_A : A \rightarrow A$ is a pullback of $m$ and $m$ in $\mathbf{SCan}$. By Corollary \ref{SCan pres and refl limits}, this is also a pullback in $\mathbf{N}$, from which it follows that $m$ is monic in $\mathbf{N}$.
\end{proof}

\begin{prop}\label{SCan closed under monos}
If $m : A \rightarrowtail B$ is monic in $\mathbf{N}$ and $B$ is in $\mathbf{SCan}_{( \mathbf{M}, \mathbf{N})}$, then $A$ and $m : A \rightarrowtail B$ are in $\mathbf{SCan}_{( \mathbf{M}, \mathbf{N})}$.
\end{prop}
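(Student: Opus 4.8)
The plan is to reduce everything to showing that $A$ is strongly Cantorian; once this is established, $m$ lies in $\mathbf{SCan}_{(\mathbf{M},\mathbf{N})}$ automatically, since that category is \emph{full} on the strongly Cantorian objects and $m$ is already a morphism of $\mathbf{N}$ between the strongly Cantorian objects $A$ and $B$. Being strongly Cantorian means exactly that $\iota_A : A \to \mathbf{T}A$ is an isomorphism in $\mathbf{N}$; but $\iota_A$ is already an isomorphism in $\mathbf{M}$ (as $\iota$ is a natural isomorphism on $\mathbf{M}$), and $\mathbf{N}$ is a conservative subcategory of $\mathbf{M}$, so it suffices to prove that the \emph{morphism} $\iota_A$ lies in $\mathbf{N}$.

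First I would record the relevant data in $\mathbf{N}$. Since $B$ is strongly Cantorian, $\iota_B : B \xrightarrow{\sim} \mathbf{T}B$ is an isomorphism in $\mathbf{N}$. Since $m$ is monic in $\mathbf{N}$ it is monic in $\mathbf{M}$ (Proposition~\ref{strenghten def}(\ref{strengthen mono})), and as $\mathbf{T}$ is a Heyting endofunctor of $\mathbf{M}$ naturally isomorphic to the identity (Proposition~\ref{strenghten def}(\ref{strengthen T heyting})), we have $\mathbf{T}m = \iota_B \circ m \circ \iota_A^{-1}$, which is monic in $\mathbf{M}$; being a morphism of $\mathbf{N}$, it is then monic in $\mathbf{N}$ as well. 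Naturality of $\iota$ on $\mathbf{M}$ gives the identity $\mathbf{T}m \circ \iota_A = \iota_B \circ m$, whose right-hand side lies in $\mathbf{N}$. Thus everything reduces to the following cancellation principle: if $h$ is monic in $\mathbf{N}$, $g$ lies in $\mathbf{N}$, and $h \circ f = g$ with $f$ a morphism of $\mathbf{M}$, then $f$ lies in $\mathbf{N}$ — to be applied with $h = \mathbf{T}m$, $g = \iota_B \circ m$ and $f = \iota_A$.

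The main step, and the only one requiring real care, is this cancellation principle. To prove it, I would form the pullback $Q$ of the cospan $Y \xrightarrow{h} Z \xleftarrow{g} X$ in $\mathbf{N}$ (which has finite limits), with projections $q : Q \to X$ and $q' : Q \to Y$; since $\mathbf{N}$ is a Heyting subcategory of $\mathbf{M}$, this is also a pullback in $\mathbf{M}$, and since $h$ is monic, $q$ is monic. In $\mathbf{M}$ the square $g \circ \id_X = h \circ f$ commutes, so the pullback property in $\mathbf{M}$ yields a unique $u : X \to Q$ with $q \circ u = \id_X$ and $q' \circ u = f$. The first equation makes $q$ a split epimorphism; together with $q$ monic this forces $u \circ q = \id_Q$ as well, so $q$ is an isomorphism in $\mathbf{M}$ with $u = q^{-1}$. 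As $q$ is a morphism of $\mathbf{N}$ and $\mathbf{N}$ is conservative in $\mathbf{M}$, $q$ is an isomorphism in $\mathbf{N}$, whence $u = q^{-1} \in \mathbf{N}$; therefore $f = q' \circ u$ lies in $\mathbf{N}$, as claimed.

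Applying the cancellation principle with $f = \iota_A$ shows $\iota_A \in \mathbf{N}$, and conservativity of $\mathbf{N}$ upgrades this to an isomorphism in $\mathbf{N}$, so $A$ is strongly Cantorian; by fullness $m$ then lies in $\mathbf{SCan}_{(\mathbf{M},\mathbf{N})}$, completing the argument. I expect the cancellation principle (pullback plus conservativity) to be the crux; the remaining ingredients are routine manipulations with the naturality of $\iota$ and the fact that $\mathbf{N}$ is a conservative Heyting subcategory.
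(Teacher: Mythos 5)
Your proof is correct, and it rests on the same underlying mechanism as the paper's: form a pullback in $\mathbf{N}$, show that the comparison morphism from $A$ into it is a section of a mono and hence an isomorphism in $\mathbf{M}$, and then use conservativity of $\mathbf{N}$ to bring the inverse back into $\mathbf{N}$ so that $\iota_A$ can be exhibited as a composite of $\mathbf{N}$-morphisms. The difference is one of packaging, and yours is arguably cleaner: you isolate a reusable cancellation lemma --- if $h$ is monic in $\mathbf{N}$, $g$ is in $\mathbf{N}$, and $h \circ f = g$ in $\mathbf{M}$, then $f$ is in $\mathbf{N}$ --- and apply it to the naturality square $\mathbf{T}m \circ \iota_A = \iota_B \circ m$, pulling back the plain cospan $\mathbf{T}A \xrightarrow{\mathbf{T}m} \mathbf{T}B \xleftarrow{\iota_B \circ m} A$. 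The paper instead pulls back $\langle \id_B, \iota_B\rangle$ along $m \times \mathbf{T}m$ over $B \times \mathbf{T}B$ and verifies by hand that $A$ with $\langle \id_A, \iota_A\rangle$ and $m$ is another pullback of the same cospan; this computes essentially the same object as your $Q$, but the bookkeeping with the two components $p$ and $q$ of the projection is heavier. What your route buys is a general principle (monos of $\mathbf{N}$ can be left-cancelled when testing membership in $\mathbf{N}$) that could be quoted elsewhere; what the paper's route makes slightly more visible is the specific identification $\iota_A = q \circ f^{-1}$. All the auxiliary facts you invoke (monicity of $\mathbf{T}m$, preservation of pullbacks by the inclusion, uniqueness of inverses across the subcategory) are justified correctly.
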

\begin{proof}
Let $m : A \rightarrowtail B$ be a mono in $\mathbf{N}$ and assume that $\iota_B$ is in $\mathbf{N}$. Let $P$ be this pullback in $\mathbf{N}$, which is also a pullback in $\mathbf{M}$ since $\mathbf{N}$ is a Heyting subcategory:
\[
\begin{tikzcd}[ampersand replacement=\&, column sep=small]
P \ar[rrr, rightarrowtail, "n"] \ar[d, rightarrowtail, "{\langle p, q\rangle}"'] \&\&\& B \ar[d, rightarrowtail, "{\langle \id_B, \iota_B \rangle}"] \\
{A \times \mathbf{T}A} \ar[rrr, rightarrowtail, "{m \times \mathbf{T}m}"'] \&\&\& {B \times \mathbf{T}B}
\end{tikzcd}
\]

We shall now establish that the following square is also a pullback in $\mathbf{M}$:
\[
\begin{tikzcd}[ampersand replacement=\&, column sep=small]
A \ar[rrr, rightarrowtail, "m"] \ar[d, rightarrowtail, "{\langle \id_A, \iota_A \rangle}"'] \&\&\& B \ar[d, rightarrowtail, "{\langle \id_B, \iota_B \rangle}"] \\
{A \times \mathbf{T}A} \ar[rrr, rightarrowtail, "{m \times \mathbf{T}m}"'] \&\&\& {B \times \mathbf{T}B}
\end{tikzcd}
\]
The square commutes since $\iota$ is a natural isomorphism. So since $P$ is a pullback, it suffices to find $f : P \rightarrow A$ such that $\langle p, q\rangle = \langle \id_A, \iota_A \rangle \circ f$ and $n = m \circ f$. Let $f\hspace{2pt}' = p$ and let $f\hspace{2pt}'' = \iota_A^{-1} \circ q$. We shall show that $f\hspace{2pt}' = f\hspace{2pt}''$ and that this is the desired $f$. By commutativity of the former square, $n = m \circ f\hspace{2pt}'$ and $\iota_B \circ n = \mathbf{T}m \circ \iota_A \circ f\hspace{2pt}''$, whence 
$$\iota_B \circ m \circ f\hspace{2pt}' = \mathbf{T}m \circ \iota_A \circ f\hspace{2pt}''.$$
Note that $\iota_B \circ m$ is monic, and since $\iota$ is a natural transformation it is equal to $\mathbf{T}m \circ \iota_A$. Hence, $f\hspace{2pt}' = f\hspace{2pt}''$. Let $f = f\hspace{2pt}' = f\hspace{2pt}''$. We have already seen that $n = m \circ f$. That $\langle p, q\rangle = \langle \id_A, \iota_A \rangle \circ f$ is immediately seen by plugging the definitions of $f\hspace{2pt}'$ and $f\hspace{2pt}''$ in place of $f$. Since $P$ is a pullback, it follows that $f$ is an isomorphism in $\mathbf{M}$ and that the latter square is a pullback.

Since $f = p$, $f$ is in $\mathbf{N}$, and since $\mathbf{N}$ is a conservative subcategory of $\mathbf{M}$, $f$ is an isomorphism in $\mathbf{N}$. Now note that $\iota_A = \iota_A \circ f \circ f^{-1} = q \circ f^{-1}$. Therefore, $\iota_A$ is in $\mathbf{N}$ and $A$ is in $\mathbf{SCan}$. So by fullness, $m : A \rightarrow B$ is in $\mathbf{SCan}$, as desired.
\end{proof}

\begin{prop}\label{SCan has power objects}
$\mathbf{SCan}_{( \mathbf{M}, \mathbf{N})}$ has power objects.
\end{prop}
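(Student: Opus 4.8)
The plan is to show that for each strongly Cantorian object $A$, the object $\mathbf{P}A$ together with the mono $m_{\in_A} : \in_A \rightarrowtail A \times \mathbf{P}A$ of Construction \ref{InConstruction} is a power object of $A$ inside $\mathbf{SCan}_{( \mathbf{M}, \mathbf{N})}$. Recall from Proposition \ref{membership as iota subset} that $\in_A$ is, up to the isomorphism $\iota_A \times \id$, just $\subseteq^{\mathbf{T}}_A$; since $A$ is strongly Cantorian, $\iota_A$ is an isomorphism in $\mathbf{N}$, so $\mathbf{P}A$ together with $m_{\in_A}$ is already a power object of $A$ in $\mathbf{N}$, obtained by transporting the universal property (\ref{PT}) (available in $\mathbf{N}$ by Proposition \ref{strenghten def}(\ref{strengthen power})) along the iso $\iota_A \times \id$. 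The real work is to move this structure from $\mathbf{N}$ down into the full subcategory $\mathbf{SCan}$.

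The crux is to prove that $\mathbf{P}A$ is strongly Cantorian, i.e. that $\iota_{\mathbf{P}A} : \mathbf{P}A \to \mathbf{T}\mathbf{P}A$ is an isomorphism in $\mathbf{N}$. Since $\iota$ is a natural isomorphism on $\mathbf{M}$, the map $\iota_{\mathbf{P}A}$ is already an isomorphism in $\mathbf{M}$, so by conservativity of $\mathbf{N}$ in $\mathbf{M}$ it suffices to exhibit $\iota_{\mathbf{P}A}$ as a morphism of $\mathbf{N}$. The plan is to factor it through maps that manifestly live in $\mathbf{N}$. Because $\mathbf{T}$ is naturally isomorphic to $\id_\mathbf{M}$ (Proposition \ref{strenghten def}(\ref{strengthen T heyting})), it preserves finite limits, images and monos, and hence carries the power-object square defining $\mathbf{P}A$ to one exhibiting $\mathbf{T}\mathbf{P}A$ as a power object of $\mathbf{T}\mathbf{T}A$; as $\mathbf{P}\mathbf{T}A$ is also such a power object (Proposition \ref{strenghten def}(\ref{strengthen power})), uniqueness of power objects yields a membership-respecting isomorphism $\theta_A : \mathbf{P}\mathbf{T}A \xrightarrow{\sim} \mathbf{T}\mathbf{P}A$ which, being the unique classifying map, lies in $\mathbf{N}$. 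Since $A$ is strongly Cantorian we have $\iota_A \in \mathbf{N}$, so $\mathbf{P}\iota_A : \mathbf{P}A \xrightarrow{\sim} \mathbf{P}\mathbf{T}A$ is an isomorphism in $\mathbf{N}$ as well.

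The hard part will be to identify $\iota_{\mathbf{P}A}$ with the $\mathbf{N}$-composite $\theta_A \circ \mathbf{P}\iota_A$; this is the genuinely computational step. Both are morphisms $\mathbf{P}A \to \mathbf{T}\mathbf{P}A$, so by the universal property of the power object $\mathbf{T}\mathbf{P}A$ of $\mathbf{T}\mathbf{T}A$ it is enough to check that they pull the membership relation back to the same subobject of $\mathbf{T}\mathbf{T}A \times \mathbf{P}A$. Unwinding the definition of $\theta_A$ and of $\mathbf{P}\iota_A$ (direct image along $\iota_A$), and using Proposition \ref{membership as iota subset} together with the naturality of $\iota$, both relations should reduce to ``$w$ is the double singleton of some element of the argument'', which is the categorical shadow of the set-theoretic identity $\{X\} = \{\bigcup \{\{x\} : x \in X\}\}$. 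Once this identity is established, $\iota_{\mathbf{P}A} = \theta_A \circ \mathbf{P}\iota_A$ lies in $\mathbf{N}$ and is an isomorphism there by conservativity, so $\mathbf{P}A \in \mathbf{SCan}$. (The subtlety here is precisely that $\mathbf{SCan}$ demands \emph{strong} Cantorian-ness: the composite $\mu_A \circ \mathbf{P}\iota_A$ built from the structural $\mu$ only shows $\mathbf{P}A$ to be Cantorian, and one must pin down the specific comparison $\theta_A$ to recover $\iota_{\mathbf{P}A}$ itself.)

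Finally I would assemble the remaining, easier structural points. Using that $\mathbf{SCan}$ has finite limits (Proposition \ref{SCan has limits}) and that the inclusion preserves and reflects them (Corollary \ref{SCan pres and refl limits}), the product $A \times \mathbf{P}A$ computed in $\mathbf{N}$ is strongly Cantorian now that $\mathbf{P}A$ is; then $\in_A \rightarrowtail A \times \mathbf{P}A$, being monic in $\mathbf{N}$ with strongly Cantorian codomain, is itself in $\mathbf{SCan}$ by Proposition \ref{SCan closed under monos}. For the universal property, given a mono $r : R \rightarrowtail A \times B$ in $\mathbf{SCan}$ with $B$ strongly Cantorian, it is monic in $\mathbf{N}$ (Corollary \ref{SCan mono iff N mono}), so transporting along $\iota_A \times \id$ and applying Proposition \ref{strenghten def}(\ref{strengthen power}) produces the unique classifying $\chi : B \to \mathbf{P}A$ in $\mathbf{N}$; by fullness $\chi$ lies in $\mathbf{SCan}$, and since pullbacks agree between $\mathbf{N}$ and $\mathbf{SCan}$ (Corollary \ref{SCan pres and refl limits}) both the square (\ref{PT}) and the uniqueness of $\chi$ descend to $\mathbf{SCan}$. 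Together with Proposition \ref{SCan has limits} this shows $\mathbf{SCan}_{( \mathbf{M}, \mathbf{N})}$ has finite limits and power objects, i.e. is a topos.
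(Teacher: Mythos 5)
Your overall architecture coincides with the paper's: reduce everything to showing $\mathbf{P}A\in\mathbf{SCan}$, i.e.\ that $\iota_{\mathbf{P}A}$ lies in $\mathbf{N}$ (conservativity then makes it an isomorphism there), by exhibiting an isomorphism $\mathbf{P}A\xrightarrow{\sim}\mathbf{PT}A\xrightarrow{\sim}\mathbf{TP}A$ built from morphisms of $\mathbf{N}$ and identifying it with $\iota_{\mathbf{P}A}$ via the uniqueness clause of the power-object property; then descend the universal property to $\mathbf{SCan}$ using Corollaries \ref{SCan pres and refl limits}, \ref{SCan mono iff N mono} and Proposition \ref{SCan closed under monos}. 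Step 2 and the closing assembly are fine. But both legs of your factorization $\iota_{\mathbf{P}A}=\theta_A\circ\mathbf{P}\iota_A$ have genuine gaps. First, the axioms of $\mathrm{IMLU}_\mathsf{Cat}$ say nothing about how $\mathbf{P}$ acts on morphisms beyond functoriality and the naturality of $\mu$; ``direct image along $\iota_A$'' is the intended \emph{interpretation} of $\mathbf{P}_{\mathrm{Mor}}$ in Theorem \ref{ConClassConCat}, not an axiom. So while $\mathbf{P}\iota_A$ is indeed an isomorphism in $\mathbf{N}$, you cannot ``unwind its definition'' to compute which subobject it pulls the membership relation back to, and the identification of $\theta_A\circ\mathbf{P}\iota_A$ with $\iota_{\mathbf{P}A}$ — the step you yourself flag as the crux — cannot be carried out from the axioms. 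Second, your $\theta_A$ presupposes that $\mathbf{TP}A$ is a power object of $\mathbf{TT}A$ \emph{with respect to} $\mathbf{N}$. Applying $\mathbf{T}$ to the universal property of $\mathbf{P}A$ only yields classifying maps in $\mathbf{N}$ for relations of the form $\mathbf{T}r$ with codomain $\mathbf{TT}A\times\mathbf{T}B$; since $\mathbf{T}\restriction\mathbf{N}$ is not known to be full or essentially surjective on $\mathbf{N}$ (only $\mathbf{T}:\mathbf{M}\to\mathbf{M}$ is an equivalence), the relativized universal property of $\mathbf{TP}A$, and hence a two-sided comparison isomorphism in $\mathbf{N}$, does not follow.

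The paper's proof repairs exactly these two points. For the first leg it takes not $\mathbf{P}\iota_A$ but the canonical comparison $\mathbf{P}A\xrightarrow{\sim}\mathbf{PT}A$ of two power objects of $\mathbf{T}A$ in $\mathbf{N}$: $\mathbf{P}A$ is one by Proposition \ref{strenghten def}(\ref{strengthen power}), and $\mathbf{PT}A$ equipped with $(\iota_{\mathbf{T}A}^{-1}\times\id)\circ m_{\subseteq^{\mathbf{T}}_{\mathbf{T}A}}$ is another, using that $\iota_{\mathbf{T}A}=\mathbf{T}\iota_A$ lies in $\mathbf{N}$ precisely because $A$ is \emph{strongly} Cantorian. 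For the second leg it simply uses the structural isomorphism $\mu_A:\mathbf{PT}A\xrightarrow{\sim}\mathbf{TP}A$, which the axioms hand you in $\mathbf{N}$ for free — this is the one place in the paper where $\mu$ earns its keep. The resulting $\alpha$ manifestly respects the membership relations, so the two-way pullback square it induces can also be filled by $\iota_{\mathbf{P}A}$, and the uniqueness of $\chi$ in (\ref{PT}) forces $\alpha=\iota_{\mathbf{P}A}$. If you substitute this $\alpha$ for your $\theta_A\circ\mathbf{P}\iota_A$, the rest of your argument goes through as written.
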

\begin{proof}
Let $A$ be in $\mathbf{SCan}$. We shall show that $\mathbf{P}A$ along with $(\iota_A^{-1} \times \id_{\mathbf{P}A}) \circ m_{\subseteq^{\mathbf{T}}_A} : \hspace{1pt} \subseteq^{\mathbf{T}}_A \hspace{1pt} \rightarrowtail A \times \mathbf{P}A$ is a power object of $A$ in $\mathbf{SCan}$. In Step 1 we show that $(\iota_A^{-1} \times \id_{\mathbf{P}A}) \circ m_{\subseteq^{\mathbf{T}}_A} : \hspace{1pt} \subseteq^{\mathbf{T}}_A \hspace{1pt} \rightarrowtail A \times \mathbf{P}A$ is in $\mathbf{SCan}$, and in Step 2 we show that it satisfies the power object property.

{\em Step 1}: It actually suffices to show that $\mathbf{P}A$ is in $\mathbf{SCan}$. Because then, by Proposition \ref{SCan has limits} and Corollary \ref{SCan pres and refl limits}, $A \times \mathbf{P}A$ is in $\mathbf{SCan}$ (and is such a product in both $\mathbf{N}$ and $\mathbf{SCan}$), so that by Proposition \ref{SCan closed under monos} and fullness of $\mathbf{SCan}$, $(\iota_A^{-1} \times \id_{\mathbf{P}A}) \circ m_{\subseteq^{\mathbf{T}}_A} : \hspace{1pt} \subseteq^{\mathbf{T}}_A \hspace{1pt} \rightarrowtail A \times \mathbf{P}A$ is in $\mathbf{SCan}$.

$\mathbf{PT}A$ along with $m_{\subseteq^{\mathbf{T}}_{\mathbf{T}A}} : \hspace{1pt} \subseteq^{\mathbf{T}}_{\mathbf{T}A} \hspace{1pt} \rightarrow \mathbf{TT}A \times \mathbf{PT}A$ is a power object of $\mathbf{TT}A$ in $\mathbf{N}$. So $\mathbf{PT}A$ along with $(\iota_{\mathbf{T}A}^{-1} \times \id_{\mathbf{PT}A}) \circ m_{\subseteq^{\mathbf{T}}_{\mathbf{T}A}} : \hspace{1pt} \subseteq^{\mathbf{T}}_{\mathbf{T}A} \hspace{1pt} \rightarrow \mathbf{T}A \times \mathbf{PT}A$ is a power object of $\mathbf{T}A$ in $\mathbf{N}$. Moreover, $\mathbf{P}A$ is a power object of $\mathbf{T}A$ in $\mathbf{N}$. Therefore, using the natural isomorphism $\mu : \mathbf{P}\mathbf{T} \rightarrow \mathbf{T}\mathbf{P}$, we obtain an isomorphism $\alpha : \mathbf{P}A \xrightarrow{\sim} \mathbf{P}\mathbf{T}A \xrightarrow{\sim} \mathbf{T}\mathbf{P}A$ in $\mathbf{N}$. This results in the following two-way pullback in both $\mathbf{M}$ and $\mathbf{N}$:
\[
\begin{tikzcd}[ampersand replacement=\&, column sep=small]
{\subseteq^{\mathbf{T}}} \ar[rr, leftrightarrow, "{\sim}"] \ar[d, rightarrowtail] \&\& {\subseteq^{\mathbf{T}}} \ar[d, rightarrowtail] \\
\mathbf{T} A \times \mathbf{P}A \ar[rr, bend left, "{\sim}", "{\id \times \alpha}"'] \&\& \mathbf{T} A \times \mathbf{T}\mathbf{P} A \ar[ll, bend left, "{\sim}"', "{\id \times \alpha^{-1}}"]
\end{tikzcd}
\]
Since this pullback-square can be filled with $\iota_{\mathbf{P}A}$ in place of $\alpha$, the uniqueness property of the pullback implies that $\iota_{\mathbf{P}A} = \alpha$, whence $\iota_{\mathbf{P}A}$ is in $\mathbf{N}$ and $\mathbf{P}A$ is in $\mathbf{SCan}$.

{\em Step 2}: Let $r : R \rightarrowtail A \times B$ be a mono in $\mathbf{SCan}$. By Corollary \ref{SCan mono iff N mono}, $r$ is also monic in $\mathbf{N}$. So since $\iota_A$ is an isomorphism in $\mathbf{N}$, there is a unique $\chi$ in $\mathbf{N}$ such that this is a pullback in $\mathbf{N}$:
\[
\begin{tikzcd}[ampersand replacement=\&, column sep=small]
R \ar[rrr] \ar[d, rightarrowtail, "{r}"'] \&\&\& \subseteq^{\mathbf{T}}_A \ar[d, rightarrowtail, "{(\iota^{-1}_A \times \id_{\mathbf{P}A}) \circ m_{\subseteq^{\mathbf{T}}_A}}"] \\
A \times B \ar[rrr, "{\id_A \times \chi}"'] \&\&\& A \times \mathbf{P} A 
\end{tikzcd} 
\]
By Step 1, by fullness and by Corollary \ref{SCan pres and refl limits}, it is also a pullback in $\mathbf{SCan}$. To see uniqueness of $\chi$ in $\mathbf{SCan}$, suppose that $\chi\hspace{1pt}'$ were some morphism in $\mathbf{SCan}$ making this a pullback in $\mathbf{SCan}$ (in place of $\chi$). Then by Corollary \ref{SCan pres and refl limits}, it would also make it a pullback in $\mathbf{N}$, whence $\chi = \chi\hspace{1pt}'$.
\end{proof}

\begin{thm}
$\mathbf{SCan}_{( \mathbf{M}, \mathbf{N})}$ is a topos.
\end{thm}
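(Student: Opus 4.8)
The plan is to invoke the definition of topos directly. By Definition \ref{power object}, a topos is nothing more than a category possessing finite limits together with power objects, so the theorem reduces to assembling two facts already in hand. First I would cite Proposition \ref{SCan has limits}, which establishes that $\mathbf{SCan}_{(\mathbf{M}, \mathbf{N})}$ has finite limits. Second, I would cite Proposition \ref{SCan has power objects}, which establishes that $\mathbf{SCan}_{(\mathbf{M}, \mathbf{N})}$ has power objects in the sense of Definition \ref{power object}. Since these are precisely the two defining clauses of a topos, the conclusion is immediate.

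The one subtlety worth flagging explicitly is that the power objects here must be \emph{absolute}, i.e. internal to $\mathbf{SCan}$, rather than the relative notion ``power object in $\mathbf{C}$ with respect to $\mathbf{D}$'' employed in the axiomatization of $\mathrm{(I)MLU}_\mathsf{Cat}$. This is exactly what Proposition \ref{SCan has power objects} delivers: in its Step 2 the representing mono $(\iota_A^{-1} \times \id_{\mathbf{P}A}) \circ m_{\subseteq^{\mathbf{T}}_A}$, the classifying morphism $\chi$, and the entire pullback square all live in $\mathbf{SCan}$, and uniqueness of $\chi$ is verified within $\mathbf{SCan}$ (via Corollary \ref{SCan pres and refl limits}). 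Hence no reconciliation between the two notions of power object is required, and there is nothing further to check.

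I expect no genuine obstacle to remain at this point: all the substantive work was carried out in establishing the cited propositions, in particular the delicate arguments that $\mathbf{SCan}$ is closed under subobjects (Proposition \ref{SCan closed under monos}) and that each $\mathbf{P}A$ is again strongly Cantorian, witnessed by the identification $\iota_{\mathbf{P}A} = \alpha$ of the canonical isomorphism built from $\mu$. The present theorem is simply the formal combination of Proposition \ref{SCan has limits} and Proposition \ref{SCan has power objects}.
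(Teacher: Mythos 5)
Your proposal is correct and matches the paper's own (one-line) proof exactly: both simply combine Proposition \ref{SCan has limits} and Proposition \ref{SCan has power objects} with the definition of a topos. Your added remark that Proposition \ref{SCan has power objects} delivers genuine (absolute) power objects internal to $\mathbf{SCan}$, not merely the relative notion from the $\mathrm{(I)MLU}_\mathsf{Cat}$ axioms, is a correct and worthwhile clarification, though the paper leaves it implicit.
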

\begin{proof}
A category with finite limits and power objects is a topos.
\end{proof}

\section{Where to go from here?}\label{where to go NF cat}

$\mathrm{(I)ML(U)}_\mathsf{Cat}$ has been shown, respectively, to interpret $\mathrm{(I)NF(U)}_\mathsf{Set}$, and has conversely been shown to be interpretable in $\mathrm{(I)ML(U)}_\mathsf{Class}$, thus yielding equiconsistency results. Since the axioms of a Heyting category can be obtained from the axioms of topos theory, it is natural to ask:

\begin{que}
Can the axioms of $\mathrm{(I)ML(U)}_\mathsf{Cat}$ be simplified? In particular, is it necessary to include the axioms of Heyting categories or do these follow from the other axioms?
\end{que}

To be able to interpret the set theory in the categorical semantics, this research introduces the axiomatization $\mathrm{(I)ML(U)}_\mathsf{Cat}$ corresponding to predicative $\mathrm{(I)ML(U)}_\mathrm{Class}$. This is analogous to the categories of classes for conventional set theory studied e.g. in \cite{ABSS14}. But it remains to answer:

\begin{que}
How should the speculative theory $\mathrm{(I)NF(U)}_\mathsf{Cat}$ naturally be axiomatized? I.e. what is the natural generalization of $\mathrm{(I)NF(U)}_\mathsf{Set}$ to category theory, analogous to topos theory as the natural generalization of conventional set theory? Moreover, can any category modeling this theory be canonically extended to a model of $\mathrm{(I)ML(U)}_\mathsf{Cat}$, or under what conditions are such extensions possible?
\end{que}

Closely intertwined with this question, is the potential project of generalizing to topos theory the techniques of automorphisms and self-embeddings of non-standard models of set theory. In particular, the endofunctor $\mathbf{T}$ considered in this research should arise from an automorphism or self-embedding of a topos. This would be a natural approach to constructing a rich variety of categories modeling the speculative theory $\mathrm{(I)NF(U)}_\mathsf{Cat}$, many of which would presumably be extensible to models of $\mathrm{(I)ML(U)}_\mathsf{Cat}$.

\end{document}